\pdfoutput=1
\documentclass[microtype]{gtpart}
\usepackage{graphicx}
\usepackage[mathscr]{eucal}
\usepackage{amssymb}
\usepackage[dvipsnames]{xcolor}
\usepackage{enumerate, cite}
\usepackage[margin=1.3in]{geometry}
\usepackage{hyperref}


\newcommand{\br}{\mathbb{R}}

\newcommand{\bz}{\mathbb Z}
\newcommand{\bn}{\mathbb N}

\newcommand{\bS}{\mathbb S}

\newcommand{\cE}{\mathcal E}

\newcommand{\ve}{\varepsilon}
\newcommand{\vp}{\varphi}

\newcommand{\ssm}{\smallsetminus}

\DeclareMathOperator{\mcg}{MCG}

\DeclareMathOperator{\Ends}{\cE}

\DeclareMathOperator{\Homeo}{Homeo}

\DeclareMathOperator{\supp}{supp}

\renewcommand{\co}{\colon\thinspace}



\newtheorem{Thm}{Theorem}[section]
\newtheorem{Thm*}{Theorem}
\newtheorem{Prop}[Thm]{Proposition}
\newtheorem{Lem}[Thm]{Lemma}
\newtheorem{Cor}[Thm]{Corollary}
\newtheorem{Cor*}[Thm*]{Corollary}

\newtheorem{Question}[Thm]{Question}

\theoremstyle{definition}
\newtheorem{Def}[Thm]{Definition}

\newtheorem*{Ex*}{Examples}

\newtheorem{Problem}[Thm]{Problem}

\numberwithin{equation}{section}


\title[Homeomorphism groups of telescoping 2-manifolds]{Homeomorphism groups of telescoping 2-manifolds\\are strongly distorted}

\author{Nicholas G. Vlamis}
\address{Department of mathematics \\ CUNY Graduate Center \\ New York, NY 10016, and \newline Department of Mathematics \\ CUNY Queens College \\ Flushing, NY 11367}
\email{nicholas.vlamis@qc.cuny.edu}

\begin{document}  

\begin{abstract}
Building on the work of Mann and Rafi, we introduce an expanded definition of a telescoping 2-manifold and proceed to study the homeomorphism group of a telescoping 2-manifold.
Our main result shows that it is strongly distorted.
We then give a simple description of its commutator subgroup, which is index one, two, or four depending on the topology of the manifold.
Moreover, we show its commutator subgroup is uniformly perfect with commutator width at most two, and we give a family of uniform normal generators for its commutator subgroup. 
As a consequence of the latter result, we show that for an (orientable) telescoping 2-manifold,  every (orientation-preserving) homeomorphism can be expressed as a product of at most 17 conjugate involutions. 
Finally, we provide analogous statements for mapping class groups.
\end{abstract}

\maketitle

\vspace{-0.5in}


\section{Introduction}

The homeomorphism groups of spheres, Euclidean spaces, and annuli share many boundedness properties.  
We recount several such properties of particular relevance to this article: 
Calegari--Freedman \cite{CalegariDistortion} showed that \( \Homeo(\bS^n) \) is strongly distorted, implying that any left-invariant metric on one of these groups has bounded diameter; Le Roux--Mann \cite{LeRouxStrong} established strong distortion for \( \Homeo(\br^n) \) and \( \Homeo(\bS^n \times \br) \); Anderson \cite{AndersonAlgebraic} showed that \( \Homeo^+(\bS^n) \) is uniformly simple\footnote{This citation is misleading as Anderson established this result only for dimensions two and three; however, his techniques can be extended to higher dimensions using the fragmentation lemma of Edwards--Kirby \cite{EdwardsDeformations}.}, more precisely, he showed every element can be expressed as a product of at most eight conjugates of any given nontrivial element and its inverse; 
and Le Roux--Mann \cite{MannAutomatic1} showed that the group of germs of orientation-preserving homeomorphisms at each end of \( \br^n \) and \( \bS^1 \times \br^n \) is uniformly simple (see also Ling \cite{LingAlgebraic}).

By extracting the core, shared topological aspects of the 2-sphere, plane, and annulus that lead to the boundedness properties of their homeomorphism groups described above, we establish a class of 2-manifolds\footnote{We follow the convention that manifolds do not have boundary.}, namely the telescoping 2-manifolds, whose homeomorphism groups exhibit similar boundedness properties. 
The notion of a telescoping 2-manifold was first introduced by Mann--Rafi \cite{MannLarge}, but the definition we give here is broader than the original. 

We will provide more motivation and context, as well as applications and further questions, after introducing and summarizing our main results.

\begin{figure}
\centering
\includegraphics[scale=0.75]{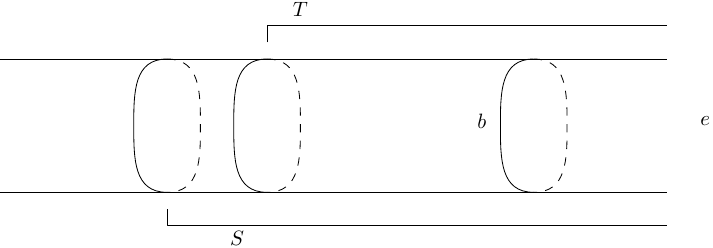}
\caption{A visual of the definition of a telescope with \( M = \mathbb S^1 \times \br \).}
\label{fig:telescope}
\end{figure}

\begin{Def}[Telescoping 2-manifold]
\label{def:telescope}
    Let \( M \) be a 2-manifold, and let \( T \) be a subsurface of \( M \) with connected compact boundary.
    We say \( T \) is a \emph{telescope} if either \( T \) is homeomorphic to the closed unit disk\footnote{Separating out the case of the closed disk is a bit artificial, as the definition can be rephrased using the Freudenthal compactification of \( M \) in such a way that can combine the compact and non-compact cases.} \( \mathbb D \) or there exists an end \( e \) of \( M \) seen by \( T \) and a subsurface \( S \)  with connected compact boundary such that
    	\begin{enumerate}[(i)]
    		\item \( T \subset S \) and  \( \partial T \) separates \( \partial S \) from \( e \), 
    		\item if the \( \Homeo(M) \)-orbit of \( e \) has cardinality greater than one, then \( S \) induces a nontrivial partition of its orbit\footnote{This technical condition is elaborated on in Section~\ref{sec:examples}.}, and
    		\item given any separating simple closed curve \( b \) in \( T \) separating \( \partial T \) from \( e \), there exists a homeomorphism \( M \to M \) that restricts to the identity on \( M \ssm S \) and  maps \( b \) onto \( \partial T \).
    	\end{enumerate}
    We call \( e \) a \emph{defining end} of \( T \), and we say  \( S \) is an \emph{extension} of \( T \). 
    If in addition, there exists a homeomorphism \( M \to M \) mapping \( M \ssm T \) into \( T \), then \( T \) is called a \emph{maximal telescope}. 
    A 2-manifold is \emph{telescoping} if it contains a maximal telescope.
\end{Def}

By the Sch\"onflies theorem and the classification of surfaces, \( \mathbb S^2 \), \( \mathbb R^2 \), and \( \mathbb S^1 \times \br \) (see Figure~\ref{fig:telescope}) are telescoping.
In fact, these are all of the finite-type telescoping 2-manifolds:
It is an exercise in the definition to see that there are no telescoping 2-manifolds with finite positive genus; in particular,  a telescoping 2-manifold is either planar or it has infinite genus.
Similarly, if a telescoping 2-manifold has more than two ends, it must have infinitely many. 
We describe an uncountable family of  infinite-type telescoping 2-manifolds in Section~\ref{sec:examples}.

A group \( G \) is \emph{strongly distorted} if there exists \( m \in \bn \) and \( \{w_n\}_{n\in\bn} \subset \bn \) such that for any sequence \( \{ g_n \}_{n\in\bn} \) in \( G \) there exists \( \mathcal S \subset G \) such that  \( g_n \in \mathcal S^{w_n} \) and \( |\mathcal S| = m \). 
Here, \( \mathcal S^k \) denotes the set \( \{ s_1s_2 \cdots s_k : s_1, \ldots, s_k \in \mathcal S \} \). 
The name stems from the fact that every infinite-order element of a strongly distorted group is arbitrarily distorted in the standard sense.
Given the discussion thus far, we know that the homeomorphism groups of the finite-type telescoping 2-manifolds---\( \mathbb S^2 \), \( \br^2 \), and \( \mathbb S^1 \times \br \)---are strongly distorted, and our main result extends this to all telescoping 2-manifolds.

\begin{Thm*}
\label{thm:distortion}
The homeomorphism group of a telescoping 2-manifold is strongly distorted. 
\end{Thm*}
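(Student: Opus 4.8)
The plan is to run the \emph{infinite-swindle} machinery of Calegari--Freedman and Le Roux--Mann, using a maximal telescope as the ``absorbing'' piece. Fix a maximal telescope $T\subseteq M$ with defining end $e$ and extension $S$, and let $\psi\in\Homeo(M)$ be a folding homeomorphism witnessing maximality, i.e.\ $\psi(M\ssm T)\subseteq T$. I would first establish a \textbf{bounded fragmentation}: there is $N\in\bn$ such that every $g\in\Homeo(M)$ is a product of at most $N$ homeomorphisms, each either compactly supported or supported in an $\Homeo(M)$-translate of $T$ (a ``telescopic'' homeomorphism). The compactly supported factors collect the ``core'' behaviour of $g$; the telescopic factors collect its behaviour near the ends. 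Here the maximality of $T$ is essential: $\psi$ lets one absorb $M\ssm T$ --- and, iterating, all of the complicated topology of $M$ --- into $T$, so that only boundedly many telescopic factors are needed, while condition (ii) guarantees that the required homeomorphisms of $M$ exist by controlling the (possibly infinite) set of ends through its finitely many $\Homeo(M)$-orbits.

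Next I would dispose of each type of factor. For compactly supported factors this is classical: condition (iii) produces $\phi\in\Homeo(M)$ sliding a collar of $T$ outward, so that disjoint copies of any compact set can be pushed along $\phi^{k}$ out to $e$, and the usual commutator trick applies with a uniform bound. For the telescopic factors --- which \emph{cannot} be spread into disjoint translates, since neighbourhoods of $e$ are nested --- I would instead use the nested subtelescopes $T=T_{0}\supsetneq T_{1}\supsetneq\cdots$ obtained by sliding cross-section curves out to $e$ via condition (iii): these form a locally finite family exhausting away from $e$. Combining this with uniform-simplicity-type control on the behaviour near a single end (of the kind Le Roux--Mann established for $\br^{n}$ and $\bS^{1}\times\br^{n}$, which the telescope axioms are tailored to reproduce), one peels off the ``translational part'' of a telescopic homeomorphism (itself a short word in $\phi$) and absorbs what remains by an infinite product $\prod_{k}\phi^{k}(\cdot)\phi^{-k}$ supported in the $T_{k}$.

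Assembling these, given any sequence $(g_{n})$ in $\Homeo(M)$ I would fragment each $g_{n}$ by the step above, distribute the boundedly many factors of all the $g_{n}$ into disjoint ``slots'' marching out to $e$ --- literally disjoint for the compactly supported factors, using the locally finite nested family for the telescopic ones --- package each family of corresponding factors into a single homeomorphism $G_{j}\in\Homeo(M)$, and let $\mathcal S$ consist of these finitely many $G_{j}$ together with $\phi$ and one or two ``selector'' homeomorphisms that extract individual slots. A bookkeeping check, arranging that all auxiliary change-of-coordinates maps are themselves short words in $\mathcal S$, recovers each $g_{n}$ as a word of length $w_{n}=O(n)$ in the fixed finite set $\mathcal S$; since $|\mathcal S|=:m$ and the sequence $(w_{n})$ do not depend on $(g_{n})$, this is precisely strong distortion.

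The main obstacle is the \textbf{bounded fragmentation} together with the treatment of the telescopic factors. Ordinary fragmentation (Edwards--Kirby) gives no uniform bound --- a large ``shift'' toward $e$ needs unboundedly many compactly supported factors --- so one must genuinely exploit maximality (the folding map $\psi$) and the homogeneity condition (iii) to peel an arbitrary homeomorphism into a bounded number of tractable pieces, and then show that the telescopic pieces, which obstruct the naive ``disjoint copies'' swindle, can still be absorbed via the nested exhaustion. Doing this uniformly for an arbitrary --- possibly wild, infinite-genus, infinitely-ended --- telescoping surface, where one cannot fall back on the homogeneity of $\bS^{2}$, $\br^{2}$, or $\bS^{1}\times\br$, and correctly handling the $\Homeo(M)$-orbit structure of the ends (condition (ii)), is where the real work lies.
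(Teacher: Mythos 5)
Your overall strategy (fragmentation followed by an infinite-swindle encoding, modelled on Le Roux--Mann) is the right one, and your handling of the compactly supported factors is essentially fine, but there is a genuine gap in the fragmentation step and, consequently, in your treatment of the ``telescopic'' factors. You fragment $g$ into pieces that are either compactly supported or supported in a single translate of the maximal telescope $T$, i.e.\ in a full neighbourhood of the defining end $e$. As you yourself note, such pieces cannot be given disjoint supports, since any two neighbourhoods of $e$ intersect. The fix you offer --- peeling off a ``translational part'' and absorbing the remainder by $\prod_{k}\phi^{k}(\cdot)\phi^{-k}$ --- does show that a single end-supported homeomorphism $h$ is a commutator (the standard swindle $h=[\Phi,\phi]$ with $\Phi=\prod_{k\ge 0}\phi^{k}h\phi^{-k}$, which is how one gets uniform perfectness), but it does not give strong distortion: for a \emph{sequence} $(g_{n})$ you must package all of the resulting auxiliary homeomorphisms $\Phi_{n}$ into one fixed finite set $\mathcal S$, and since every $\Phi_{n}$ is supported in a neighbourhood of the same end $e$, their supports cannot be made disjoint, so the infinite product of conjugates of the $\Phi_{n}$ that would define an element of $\mathcal S$ does not exist. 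Invoking ``uniform-simplicity-type control'' does not supply this missing uniform-in-$n$ encoding.

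The missing idea is a different fragmentation. Each $h_{n}$ must be written as $\alpha_{n}\circ\beta_{n}\circ\gamma_{n}$, where $\gamma_{n}$ is supported in a single tube and $\alpha_{n}$, $\beta_{n}$ are supported in two \emph{fixed, interleaved, locally finite disjoint unions of tubes} $A=\bigcup_{j}A_{j}$ and $B=\bigcup_{j}B_{j}$ marching out to the maximal ends, with $A$ and $B$ chosen once and for all in terms of the entire sequence $(h_{n})$ by a recursive diagonal construction. Unlike a neighbourhood of $e$, such a disjoint union of tubes \emph{can} be displaced off itself: one produces $\sigma,\tau$ supported in a maximal telescope so that all the translates $\sigma^{n}(A_{j})$ and $\tau^{m}(A_{i})$ are pairwise disjoint and locally finite, which is exactly what permits forming the two infinite products $F=\prod_{n}\sigma^{n}f_{n}\sigma^{-n}$ and $G=\prod_{n}\tau^{n}g_{n}\tau^{-n}$ and recovering $\alpha_{n}=[\sigma^{-n}F\sigma^{n},\tau^{-n}G\tau^{n}]$ as a word of length $O(n)$ in a fixed alphabet. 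Without replacing ``supported in a translate of $T$'' by ``supported in a disjoint union of tubes accumulating at the ends,'' the argument cannot be completed. (Two smaller points: one should first pass to the finite-index subgroup $H(M)$ of homeomorphisms fixing the maximal ends, since strong distortion passes to finite extensions, and the case of a perfect set of maximal ends requires a separate treatment.)
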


For the entirety of the article, including in proving Theorem~\ref{thm:distortion}, we work with a specific finite-index subgroup of the homeomorphism group of a telescoping 2-manifold, which we now introduce.
The choice of subgroup depends on the structure of the maximal ends of the manifold.
The notion of a maximal end was introduced by Mann--Rafi \cite{MannLarge}; we give the definition in Section~\ref{sec:prelim}, and for now, simply note that the set of maximal ends of a telescoping 2-manifold is either a singleton, a doubleton, or a perfect set\footnote{Recall that the empty set is a perfect set, so the 2-sphere falls into the third category.}.

\textbf{Notation.}
Let \( M \) be a telescoping 2-manifold.
    \begin{itemize}
        \item If \( M \) has a perfect set of maximal ends, let \( H(M) = \Homeo^+(M) \) if \( M \) is orientable, otherwise let \( H(M) = \Homeo(M) \).
        \item If \( M \) is orientable and has a finite number of maximal ends, let \( H(M) \) denote the group of orientation-preserving homeomorphisms of \( M \)  that fix each maximal end of \( M \).
        \item If \( M \) is not orientable and has a finite number of maximal ends, let \( H(M) \) denote the subgroup of \( \Homeo(M) \) consisting of homeomorphisms fixing each maximal end of \( M \). 
    \end{itemize}
Note that if \( M \) is one of \( \bS^2 \), \( \br^2 \), or \( \bS^1 \times \br \), then \( H(M) = \Homeo_0(M) \),  the connected component of the identity.

As strong distortion is inherited by finite extensions, to establish Theorem~\ref{thm:distortion} it is enough to show that \( H(M) \) is strongly distorted, which  is accomplished in Theorem~\ref{thm:distortion2} in the case of a finite number of maximal ends,  with explicit constants. 
The case of a perfect set of maximal ends was established in \cite[Theorem~5.9.2]{VlamisHomeomorphism}.

Recall the  \emph{commutator subgroup} of a group \( G \), denoted \( [G,G] \), is the subgroup generated by \emph{commutators}, that is, elements of the form \( [a,b] = aba^{-1}b^{-1} \) for \( a,b \in G \). 
Given \( p \in \bn \), a group \( G \) is \emph{uniformly perfect with commutator width \( p \)} if every element of \( G \) can be expressed as a product of  \( p \) commutators and if there exists an element that cannot be expressed as a product of \( p-1 \) commutators.

\begin{Thm*}
\label{thm:perfect}
If \( M \) is a telescoping 2-manifold, then \( H(M) = [\Homeo(M), \Homeo(M)] \) and \( H(M) \) is uniformly perfect with commutator width at most two. 

\end{Thm*}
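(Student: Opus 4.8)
The plan is to prove the two assertions in turn, the easier being $[\Homeo(M),\Homeo(M)]\subseteq H(M)$. In every case $H(M)$ is the kernel of a homomorphism from $\Homeo(M)$ onto an abelian group: the product of the orientation character $\Homeo(M)\to\bz/2\bz$ (when $M$ is orientable) with the permutation action of $\Homeo(M)$ on the set of maximal ends, a set of size at most two in the finite case and empty in the perfect-set case, where the orientation character alone is used. Thus $\Homeo(M)/H(M)$ embeds into $(\bz/2\bz)^{2}$, is abelian, and receives every commutator trivially, which also recovers the index being $1$, $2$, or $4$. It then suffices to prove that every $f\in H(M)$ is a product of at most two commutators of elements of $H(M)$, as this yields both the reverse inclusion and the bound on the commutator width.

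The engine is a Mather-style ``one-commutator'' lemma via an Eilenberg swindle. If $f\in H(M)$ is supported in a subsurface $N$ with compact boundary and there is $\phi\in H(M)$ such that the translates $\{\phi^{n}(N)\}_{n\ge 0}$ have pairwise disjoint interiors and leave every compact subset of $M$, then the partial products of $\Phi:=\prod_{n\ge 0}\phi^{n}f\phi^{-n}$ stabilize on each compact subset of $M$, so $\Phi$ lies in $\Homeo(M)$ and, since $H(M)$ is cut out by closed conditions, in $H(M)$; the re-indexing identity $\phi\Phi\phi^{-1}=f^{-1}\Phi$ then gives $f=[\Phi,\phi]$. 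A variant with variable exponents shows that if $g\in H(M)$ is supported in $\bigsqcup_{n\ge 0}\phi^{n}(A)$ for a single ``block'' $A$ with $\phi$-orbit as above and preserves each $\phi^{n}(A)$ setwise, then $g$ too is a single commutator in $H(M)$, obtained by telescoping a product $\prod_{n}\phi^{n}k_{n}\phi^{-n}$ whose factors are built from the restrictions of $g$. The telescoping hypothesis supplies the data $(N,\phi)$ and $(A,\phi)$: given the maximal telescope $T$ with defining end $e$, condition~(iii) of Definition~\ref{def:telescope} realizes a nested sequence of separating curves shrinking to $e$ as the orbit of a single $\phi\in H(M)$, whose successive ``collars'' are pairwise disjoint, locally finite, and accumulate only at $e$.

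It remains to fragment an arbitrary $f\in H(M)$ into at most two such pieces. The target is a decomposition $f=g_{1}g_{2}$ where, after a correction supported near $e$ (realigning the images $f(\partial T_{n})$ back onto the curves $\partial T_{n}$, using the self-similarity of the end $e$), the factor $g_{1}$ is supported near $e$ and preserves the collars, hence is a single commutator by the variable-exponent lemma, while $g_{2}$ is supported off a small telescope neighborhood of $e$ and is therefore carried by the maximality homeomorphism $\psi$ (which maps $M\ssm T$ into $\mathring T$) into a single collar, again a single commutator; absorbing all conjugations then writes $f$ as a product of at most two commutators in $H(M)$. The main obstacle is precisely this fragmentation, and within it the behaviour of $f$ at the maximal ends: a generic element of $H(M)$ is supported on all of $M$ and has nontrivial germ at each maximal end, and making the near-$e$ part collar-preserving—and absorbing the ``middle'' of $M$ into the two pieces without a third commutator—requires the telescope axioms in an essential way, and is expected to need, or to re-establish in this setting, the uniform perfectness of the group of germs of (orientation-preserving) homeomorphisms at a self-similar end, in the spirit of Le Roux--Mann and Ling. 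One must also keep every auxiliary homeomorphism ($\phi$, $\psi$, and the corrections) inside $H(M)$. The perfect-set-of-maximal-ends case is handled by the same scheme, in parallel with \cite{VlamisHomeomorphism}.
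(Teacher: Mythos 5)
Your overall architecture matches the paper's: the inclusion $[\Homeo(M),\Homeo(M)]\subseteq H(M)$ via the abelian quotient, the reduction to writing each element of $H(M)$ as two commutators, and the Mather/Eilenberg swindle $f=[\Phi,\phi]$ for an element supported in a set with infinitely many disjoint, locally finite translates are all exactly the ingredients of Lemma~\ref{lem:commutator} and Theorem~\ref{thm:uniform2}. The gap is in the step you yourself flag as ``the main obstacle'': the fragmentation of a general $f\in H(M)$ into two swindle-able pieces. Your proposed decomposition --- correct $f$ near the maximal end so that the near-end factor $g_1$ \emph{preserves the collars} $\partial T_n$, and push the remainder $g_2$ into a single collar --- does not close up. The correction realigning $f(\partial T_n)$ onto $\partial T_n$ is itself a homeomorphism with uncontrolled support near the end, so you have only relocated the problem; and a collar-preserving germ is a very special germ, so reducing to it genuinely requires something like the uniform perfectness of the group of germs at the end, which you invoke ``in the spirit of Le Roux--Mann and Ling'' but neither prove nor cite in a form applicable to a telescoping $2$-manifold with finitely many maximal ends. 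As written, the proof is incomplete at precisely the step that carries all the content.

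The paper avoids germ groups entirely by a different fragmentation (Proposition~\ref{prop:fragmentation}, following Le Roux--Mann's Lemma~2.2): one chooses, \emph{adaptively in $f$}, a sparse sequence of thin tubes $P_{\pm k_n}$ and two interleaved standard unions of tubes $A=\bigcup A_j$ and $B=\bigcup B_j$ with $A_j\cap B_k\neq\varnothing$ iff $j\in\{k,k+1\}$, taking each $A_n$ large enough that $P_{\pm k_n}$ and $f(P_{\pm k_n})$ are both nestled in $A_{\pm n}$. Proposition~\ref{prop:local_approx} then produces $\alpha$ supported in $A$ agreeing with $f$ on the $P_{\pm k_n}$, so that $\alpha^{-1}\circ f$ is the identity on those thin tubes and hence splits as a product of pieces supported in the complementary tubes, i.e.\ in $B$ together with one piece supported in a single tube; after absorbing the latter, Corollary~\ref{cor:simple_fragmentation} gives $f$ as a product of two homeomorphisms each supported in a standard union of tubes, and Lemma~\ref{lem:commutator} makes each a single commutator. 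No realignment of the germ of $f$ at the end is ever needed --- the point is that the scales of the $A_n$ are chosen after seeing $f$, not fixed in advance. To repair your argument you would either need to import this two-family interleaving, or independently establish uniform perfectness of the relevant germ group, which is a nontrivial additional theorem in this setting.
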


It is clear that the commutator subgroup of \( \Homeo(M) \) is contained in \( H(M) \), and so the content of Theorem~\ref{thm:perfect} is in showing that \( H(M) \) is uniformly perfect; this is established in Theorem~\ref{thm:uniform2} in the case of a finite number of maximal ends, and the case of a perfect set of maximal ends was established in \cite[Theorem~A]{MalesteinSelf} (see also \cite[Theorem~5.8.1]{VlamisHomeomorphism}). 
Theorem~\ref{thm:perfect} for the 2-sphere is due to Anderson \cite{AndersonAlgebraic}; it is not clear how to attribute the cases of \( \br^2 \) and \( \mathbb S^1 \times \br \).
We note that Tsuboi \cite{TsuboiHomeomorphism} proved that the commutator width of \( H(\mathbb S^n) \) is one, and Bhat and the author \cite{BhatOrientation} showed that the commutator width of \( H(\br^n) \) and \( H(\bS^n \times \br) \) are also one. 
As an immediate corollary, we compute the abelianization of \( \Homeo(M) \), that is, the group \( \Homeo(M)/[\Homeo(M), \Homeo(M)] \). 

\begin{Cor} 
Let \( M \) be a telescoping 2-manifold.
\begin{enumerate}[(1)]
	\item Suppose \( M \) is orientable. 
		If \( M \) has two maximal ends, then the abelianization of \( \Homeo(M) \) is \( \bz/2\bz \times \bz/2\bz \); otherwise, the abelianization is \( \bz/2\bz \).
	\item Suppose \( M \) is not orientable.
		If \( M \) has two maximal ends, then the abelianization of \( \Homeo(M) \) is \( \bz/2\bz \); otherwise, the abelianization is trivial. \qed
\end{enumerate}
\end{Cor}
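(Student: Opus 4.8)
The plan is to reduce the statement to Theorem~\ref{thm:perfect}. That theorem gives \( [\Homeo(M),\Homeo(M)] = H(M) \), so the abelianization of \( \Homeo(M) \) is exactly the quotient \( \Homeo(M)/H(M) \), and it suffices to identify this group in each case. By the definition of \( H(M) \) recalled above, \( H(M) \) is the kernel of the homomorphism
\[
\phi \co \Homeo(M) \longrightarrow \bz/2\bz \times \mathrm{Sym}(\mathcal E_{\max}),
\]
whose first coordinate records whether a homeomorphism preserves or reverses orientation (suppressed when \( M \) is non-orientable) and whose second coordinate records the permutation induced on the set \( \mathcal E_{\max} \) of maximal ends of \( M \); here I use the convention that when \( \mathcal E_{\max} \) is a perfect set the second coordinate is dropped, so that \( \phi \) is just the orientation homomorphism and \( H(M) = \Homeo^+(M) \) (resp.\ \( H(M) = \Homeo(M) \)) in the orientable (resp.\ non-orientable) case. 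Since maximal ends are characterized topologically, \( \phi \) is well defined and \( \Homeo(M) \) does act on \( \mathcal E_{\max} \); hence the abelianization is the image of \( \phi \), and the task is to compute that image.

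Now I would split into cases using the fact, recalled in the introduction, that \( \mathcal E_{\max} \) is a singleton, a doubleton, or a perfect set (possibly empty). If \( M \) is non-orientable and \( \mathcal E_{\max} \) is not a doubleton, then the target of \( \phi \) is trivial (a lone maximal end is fixed by every homeomorphism), so \( \Homeo(M) = H(M) \) and the abelianization is trivial. If \( M \) is orientable and \( \mathcal E_{\max} \) is not a doubleton, then \( H(M) = \Homeo^+(M) \) and the target of \( \phi \) is \( \bz/2\bz \); to conclude that the abelianization is \( \bz/2\bz \) I must produce an orientation-reversing homeomorphism of \( M \). Finally, if \( \mathcal E_{\max} = \{e_1, e_2\} \) is a doubleton, the target of \( \phi \) is \( \bz/2\bz \) in the non-orientable case and \( \bz/2\bz \times \bz/2\bz \) in the orientable case, and to get surjectivity I must exhibit a homeomorphism interchanging \( e_1 \) and \( e_2 \) and, in the orientable case, realize the remaining orientation classes as well; granting this, the abelianization is \( \bz/2\bz \), respectively \( \bz/2\bz \times \bz/2\bz \), as claimed.

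The crux, therefore, is the surjectivity of \( \phi \), i.e., constructing the required symmetries of \( M \). For the finite-type models this is explicit: a reflection reverses orientation on \( \bS^2 \) and on \( \br^2 \), and on \( \bS^1 \times \br \) the maps \( (\theta,t) \mapsto (-\theta, t) \) and \( (\theta,t) \mapsto (\pm\theta, -t) \) realize all four classes in \( \bz/2\bz \times \bz/2\bz \). In general I would build an orientation-reversing self-homeomorphism from the self-similarity packaged in a maximal telescope together with the classification of (infinite-type) surfaces, and when \( \mathcal E_{\max} = \{e_1,e_2\} \) I would cut \( M \) along a separating simple closed curve into two pieces, each containing a maximal telescope seeing one of the \( e_i \), and interchange them — arranging, by adjusting the gluing homeomorphism, that the swap preserves or reverses orientation at will. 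I expect this last point to require the most care; note, however, that it is essentially the computation of the index \( [\Homeo(M):H(M)] \in \{1,2,4\} \) that is already part of the set-up for \( H(M) \), so once Theorem~\ref{thm:perfect} is in hand the corollary follows quickly.
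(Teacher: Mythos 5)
Your proposal is correct and matches the paper's own (implicit) argument: the corollary is stated with only a \(\qed\) because, once Theorem~\ref{thm:perfect} gives \([\Homeo(M),\Homeo(M)]=H(M)\), the abelianization is \(\Homeo(M)/H(M)\), and \(H(M)\) is by definition the kernel of the orientation/maximal-end-permutation homomorphism you call \(\phi\). The surjectivity of \(\phi\), which you rightly isolate as the only real content, is available from the paper itself: the classification of surfaces yields an orientation-reversing self-homeomorphism, Lemma~\ref{lem:abs_same_tel}(i) yields an end-swap, and the orientation-preserving involution \(\theta(x,n)=(\rho(x),-n)\) of \(P\tilde\times\bz\) constructed in the proof of Corollary~\ref{cor:rotation} swaps the two maximal ends while preserving orientation, which disposes of the one delicate possibility you flag in the orientable two-ended case (that the image of \(\phi\) be only the diagonal of \(\bz/2\bz\times\bz/2\bz\)).
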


Though presented second, Theorem~\ref{thm:perfect} is motivation for Theorem~\ref{thm:distortion}.
It is a standard fact that the image of the set of commutators under a quasimorphism has bounded diameter (see \cite{Calegariscl}).
As noted in \cite{CalegariDistortion}, a standard trick for finding undistorted elements in a group is to find unbounded quasimorphisms, and hence, given a large group in which every quasimorphism is bounded (e.g., a uniformly perfect group), it is natural to ask if every element in the group is arbitrarily distorted.
This is indeed the case for telescoping 2-manifolds by Theorem~\ref{thm:distortion}.

The implication of bounded quasimorphisms also shows that Theorem~\ref{thm:perfect} is a strengthening of a result of Lanier and the author \cite[Corollary~1.9]{LanierHomeomorphism} when restricted to telescoping 2-manifolds with a finite number of maximal ends.
In the parlance of \cite{LanierHomeomorphism}, telescoping 2-manifolds with a finite number of maximal ends form a subclass of the class of spacious 2-manifolds.
Lanier and the author showed that for a spacious 2-manifold \( M \), the group \( H(M) \) has a dense conjugacy class, and hence as a consequence, every continuous quasimorphism of \( \Homeo(M) \) is bounded.

An element \( g \) of a group \( G \) \emph{uniformly normally generates} the group if there exists \( p \in \bn \) such that every element in \( G \) is a product of at most \( p \) conjugates of \( g \) and \( g^{-1} \). 
In Definition~\ref{def:dilation} below, we introduce the notion of a \emph{dilation at infinity}, which is meant to describe homeomorphisms that have a contracting or expanding behavior at each maximal end. 
For example, given \( k \in (0, \infty) \), the map \( \br^2 \to \br^2 \) given by \( \mathbf x \mapsto k \mathbf x \) is a dilation at infinity, which contracts at the end of \( \br^2 \). 
Our final theorem says that dilations at infinity are uniform normal generators.

\begin{Thm*}
\label{thm:dilation}
Let \( M \) be a telescoping 2-manifold with a finite number of maximal ends. 
If \( \sigma \in H(M) \) is a dilation at infinity, then \( \sigma \) uniformly normally generates \( H(M) \).
\end{Thm*}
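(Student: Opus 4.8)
The plan is to run an infinite-swindle argument off the dilation, using the telescoping structure to funnel everything into the region that \( \sigma \) displaces, and using the boundedness results already established to keep every count independent of the input. Throughout, ``uniformly normally generates'' is unaffected by replacing \( \sigma \) with \( \sigma^{-1} \), with a power, or with a conjugate, so I would begin by putting \( \sigma \) in a convenient normal form. Since \( M \) has one or two maximal ends \( e_1,\dots,e_k \) with \( k\le 2 \), I would show that, after conjugating in \( H(M) \), one may assume \( \sigma \) is a \emph{model dilation}: for each \( i \) there is a nested cofinal family of maximal telescope neighbourhoods \( T_i=V_i^0\supset V_i^1\supset\cdots \) of \( e_i \) with \( \bigcap_n V_i^n=\varnothing \) and \( \sigma(V_i^n)=V_i^{n+1} \), so that the compact shells \( A_i^n:=\overline{V_i^n\ssm V_i^{n+1}} \) satisfy \( \sigma(A_i^n)=A_i^{n+1} \) and the forward \( \sigma \)-orbit of every compact subset of \( \bigcup_i T_i \) escapes the maximal ends. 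That the definition of a dilation at infinity supplies such data should be essentially formal, straightening the separating curves \( \sigma^n(\partial V_i^0) \) by means of property~(iii) in Definition~\ref{def:telescope}.

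The engine is the standard swindle: if \( f\in H(M) \) has compact support meeting only finitely many shells, then the sets \( \sigma^n(\supp f) \) for \( n\ge 0 \) are pairwise disjoint and form a locally finite family, so \( F:=\prod_{n\ge 0}\sigma^n f\sigma^{-n} \) is a well-defined element of \( H(M) \) satisfying \( F=f\cdot\sigma F\sigma^{-1} \); hence \( f=[F,\sigma]=(F\sigma F^{-1})\sigma^{-1} \) is a product of two conjugates of \( \sigma^{\pm1} \). After this, two things remain: (A) strip off the behaviour of an arbitrary \( g\in H(M) \) near the maximal ends at bounded cost, and (B) show that what is left is, at bounded cost, supported in finitely many shells.

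For (A) I would invoke a local boundedness statement: the group of germs at a maximal end \( e_i \) is uniformly simple --- the germ-level analogue of the results of Le Roux--Mann and of Theorems~\ref{thm:distortion} and~\ref{thm:perfect}, the neighbourhoods of \( e_i \) all being homeomorphic maximal telescopes. A dilation has nontrivial germ at each \( e_i \), so it uniformly normally generates the germ group there; since \( H(M) \) surjects onto the product of these germ groups, there is a product \( \theta \) of boundedly many conjugates of \( \sigma^{\pm1} \) such that \( \theta^{-1}g \) is the identity on some \( V_i^{m} \) for every \( i \). Replacing \( g \) by \( \theta^{-1}g \), I may assume \( \supp g\subset M\ssm\bigcup_i V_i^{m} \). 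Moreover, since \( M \) is telescoping, iterating a maximality homeomorphism \( \rho \) (one with \( \rho(M\ssm T)\subset \mathring{T} \) for a maximal telescope \( T \)) shows that the entire topology of \( M \) away from the maximal ends is concentrated in neighbourhoods of those ends; one can therefore push \( \supp g \) into a union of finitely many shells \( \bigcup_{i}\bigcup_{0\le n<N}A_i^n \), at each step paying a bounded number of conjugates of \( \sigma^{\pm1} \) (the compressing homeomorphisms being themselves handled by the swindle of the second paragraph). Applying that swindle once more finishes the argument, and careful bookkeeping produces an explicit constant.

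The step I expect to be the genuine obstacle is (B), the reduction to finitely many shells. Unlike \( \Homeo(\bS^n) \), the group \( H(M) \) is not fragmentable into compactly supported pieces, so this reduction must be carried out globally rather than by a local fragmentation lemma; arranging that the number of compression steps is bounded independently of \( g \) is precisely where the telescoping hypothesis --- together with the strong distortion and uniform perfectness of \( H(M) \) already established --- is indispensable.
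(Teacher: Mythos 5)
Your swindle engine is right and is exactly the mechanism the paper uses, but there is a genuine gap in step (A), and it is the load-bearing step. You reduce everything to the claim that the germ group at each maximal end is uniformly simple (or at least that a dilation uniformly normally generates it). No such result is available for telescoping 2-manifolds: the Le Roux--Mann/Ling theorems you cite cover \( \br^n \) and \( \bS^1\times\br^n \) only, and proving the analogue here is essentially as hard as the theorem you are trying to prove --- it would require precisely the fragmentation-plus-absorption machinery whose absence you are papering over. Note also that you have mislocated the difficulty: once (A) is granted, step (B) is nearly vacuous, since the complement of maximal telescope neighbourhoods of the (at most two) maximal ends is a single tube, so the remainder is already supported in ``finitely many shells'' and one application of the swindle finishes. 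The hard part is not compressing a compactly supported remainder; it is dealing with elements whose support is an infinite, locally finite union of tubes accumulating at the maximal ends, which is what an honest fragmentation of a general \( g\in H(M) \) actually produces. A secondary gap: your ``model dilation'' normalization, with \( \sigma(V_i^n)=V_i^{n+1} \) and each shell a tube, implicitly assumes \( \sigma \) is a \emph{strong} dilation; the paper explicitly leaves open whether every dilation is strong, and instead passes to a power \( \sigma^N \) that is strong, at the cost of a factor of \( N \) in the conjugate count (also, these shells are generally not compact, as they contain non-maximal ends).

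The paper's route avoids germ groups entirely. Corollary~\ref{cor:simple_fragmentation} writes an arbitrary \( h\in H(M) \) as a product of two homeomorphisms, each supported in a \emph{standard union of tubes} (an infinite locally finite union \( \bigcup_n A_n \) marching out to a maximal end). Such a factor is then absorbed directly by the dilation: after conjugating so that \( A_n=P_{2n} \) where \( P_n=\sigma^{n-1}(P_1) \) are the shells of the strong dilation, one forms \( \vp=\prod_{k\ge 0}\sigma^{2k}h\sigma^{-2k} \) and verifies the identity \( h=[\vp,\sigma]\circ[\vp,\sigma]^{\sigma} \), a product of four conjugates of \( \sigma^{\pm1} \); the interleaving by even-indexed shells is what makes the telescoping sum cancel despite the support being non-compact. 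If you want to salvage your outline, replace step (A) by this fragmentation into standard unions of tubes and replace your single-commutator swindle by the doubled commutator above.
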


For the case of a finite number of maximal ends, a quantitative version of Theorem~\ref{thm:dilation} is given in Theorem~\ref{thm:dilation2} for a subclass of dilations at infinity; Theorem~\ref{thm:dilation} in this case is then given as a corollary to Theorem~\ref{thm:dilation2}, namely Corollary~\ref{cor:dilation}. 
In the case of a perfect set of maximal ends, a related and stronger result, namely the complete classification of the (uniform) normal generators of \( H(M) \), was established in \cite[Theorem~5.8.1]{VlamisHomeomorphism} (the case of the 2-sphere is due to Anderson \cite{AndersonAlgebraic}, and the case of the 2-sphere with a Cantor set removed is due to Calegari--Chen \cite{CalegariNormal}).

Before continuing, we remark that the theorems above can be extended to several subgroups of the homeomorphism group of a telescoping 2-manifold \( M \) with a nonempty perfect set of maximal ends.
Given a subset \( \boldsymbol\mu \) of the maximal ends of \( M \) of cardinality at most two, Theorem~\ref{thm:distortion}, Theorem~\ref{thm:perfect}, and Theorem~\ref{thm:dilation}  hold with \( \Homeo(M) \) replaced by \( \Homeo(M, \boldsymbol \mu) = \{ h \in \Homeo(M) : h(\boldsymbol \mu) = \boldsymbol\mu\} \) and \( H(M) \) replaced by \( H(M, \boldsymbol \mu) = \{ h \in H(M) : h(\mu) = \mu \text{ for all } \mu \in \boldsymbol \mu\} \). 
This case is not technically mentioned in the proofs below, but all the proofs go through verbatim. 

\subsection*{Motivation}
Building on the language set out by Mann--Rafi, Malestein--Tao introduced the notion of a perfectly self-similar 2-manifold\footnote{Malestein--Tao actually use the term uniformly self-similar, but we will follow the naming convention set out in \cite{VlamisHomeomorphism}.}, where the prototypical example is the 2-sphere with an embedded Cantor space\footnote{A \emph{Cantor space} is any space homeomorphic to \( 2^\bn \), which in turn, is homeomorphic to the standard middle-thirds Cantor set.} removed. 
In the past few years, various boundedness properties---summarized in Theorem~\ref{thm:vlamis} below---have been established for the homeomorphism groups of perfectly self-similar 2-manifolds.
Moreover, the proofs of these various properties mimicked the classical proofs establishing the same properties for homeomorphism groups of spheres. 
This led the author, in a largely expository article \cite{VlamisHomeomorphism}, to expand on these results and make clear the analogy between the more classical results about \( \Homeo(\bS^2) \) and the recent results regarding perfectly self-similar 2-manifolds; in particular, we gave a definition in which the 2-sphere is itself a perfectly self-similar 2-manifold. 

As noted in the beginning of the introduction, the homeomorphism groups of spheres, Euclidean spaces, and annuli share many properties.
The motivation of this article is to find the class of 2-manifolds that fit in an analogy with \( \br^2 \) and \( \bS^1 \times \br \), just as the perfectly self-similar 2-manifolds fit in an analogy with \( \bS^2 \). 
The class of telescoping 2-manifolds with a perfect set of maximal ends is equal to the class of perfectly self-similar 2-manifolds, and hence, their homeomorphism groups behave like that of \( \bS^2 \).
Theorem~\ref{thm:distortion}, Theorem~\ref{thm:perfect}, and Theorem~\ref{thm:dilation}, together with the topological structure of telescoping 2-manifolds uncovered in Section~\ref{sec:topology}, cement the analogy 
between the homeomorphism groups of telescoping 2-manifolds with one, two, and a perfect set of maximal ends and those of \( \br^2 \), \( \bS^1 \times \br \), and \( \bS^2 \), respectively.

The main theorems described above extend the known results for telescoping 2-manifolds with a perfect set of maximal ends (i.e., the perfectly self-similar 2-manifolds) to all telescoping 2-manifolds. 
The results we are extending are summarized in the following theorem. 

\begin{Thm}[{\cite{VlamisHomeomorphism}}]
\label{thm:vlamis}
Let \( M \) be a perfectly self-similar 2-manifold (or equivalently, a telescoping 2-manifold with a perfect set of maximal ends).
    \begin{enumerate}[(1)]
        \item \( \Homeo(M) \) is strongly distorted.
        \item \( H(M) = [\Homeo(M), \Homeo(M)]  \).
        \item \( H(M) \) is uniformly perfect with commutator width at most two. 
        \item If \( h \in H(M) \) acts nontrivially on the maximal ends of \( M \), then \( h \) uniformly normally generates \( H(M) \).
        More precisely, every element of \( H(M) \) is a product of at most eight conjugates of \( h \) and \( h^{-1} \). 
    \end{enumerate}
\end{Thm}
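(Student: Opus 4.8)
The plan is to derive all four statements from one mechanism --- the infinite ``swindle'' --- once the right combinatorial structure has been extracted from perfect self-similarity. First I would fix a subsurface \( V \subset M \) with a single boundary circle together with a homeomorphism \( t \in \Homeo(M) \) such that, writing \( V_i = t^i(V) \), the subsurfaces \( V = V_0 \supsetneq V_1 \supsetneq \cdots \) are nested with \( \bigcap_i V_i \) equal to a single maximal end, each \( V_i \) is homeomorphic to \( V \), and \( M \) with an open disk removed embeds in \( V \); one can further arrange that conjugation by powers of \( t \) sends the fundamental domain \( V_0 \ssm V_1 \) to a locally finite family of subsurfaces accumulating only at that maximal end. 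The real work is then concentrated in a \emph{fragmentation--absorption lemma}: there is a universal \( N \in \bn \) such that every element of \( H(M) \) is a product of at most \( N \) homeomorphisms, each conjugate in \( \Homeo(M) \) to a homeomorphism supported in \( V \). This is exactly where perfect self-similarity enters --- the corresponding statement is false on a generic infinite-type surface --- and I expect it to be the main obstacle, since it combines a genuine fragmentation argument with the absorption of the (non-compact, ``large'') complementary region into a copy of \( V \) by way of the self-similar identifications.

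Granting the lemma, statement (3) is the classical commutator swindle. For a single \( f \) supported in \( V \), conjugate so that \( \supp(f) \subset V_0 \ssm V_1 \); then \( F := \prod_{i \ge 0} t^i f t^{-i} \) converges, satisfies \( F = f \cdot tFt^{-1} \), and hence \( f = FtF^{-1}t^{-1} = [F,t] \) is a single commutator. Applying this to the \( \le N \) factors of an arbitrary \( g \in H(M) \), choosing disjoint self-similar ``lanes'' (all accumulating at a maximal end) to carry the \( N \) swindles so that the resulting \( F \)'s pairwise commute under a common shift, collapses the product of \( N \) commutators into at most two; tracking orientations shows the commutators can be taken in \( H(M) \). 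This gives uniform perfectness with commutator width at most two. Statement (2) is then immediate: \( [\Homeo(M),\Homeo(M)] \subseteq H(M) \) because the defining conditions of \( H(M) \) (orientation-preservation when \( M \) is orientable) are inherited by commutators, and the reverse containment is part (3).

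For (1) I would run the Calegari--Freedman scheme: fragment each \( g_n \) into \( \le N \) pieces conjugate into \( V \), push the \( n \)-th batch into \( V_0 \ssm V_1 \), assemble from these a fixed number of infinite products over \( n \) of the \( t \)-translated pieces, and recover each \( g_n \) as a word of length \( O(n) \) in this finite collection together with \( t \) and the structural conjugators; since the ambient set has size bounded in terms of \( N \) only, this is precisely strong distortion. For (4), if \( h \in H(M) \) moves some maximal end, then --- using that the maximal ends form a perfect set, so there is ``room'' --- one may choose a subsurface \( U \) with \( M \ssm U \) a disklike neighbourhood of the remaining ends such that \( h(U), h^2(U), \ldots \) are pairwise disjoint and accumulate at a maximal end; for \( f \) supported in \( U \), the product \( F := \prod_{i \ge 0} h^i f h^{-i} \) converges and \( f = [F,h] = (FhF^{-1})h^{-1} \) exhibits \( f \) as a product of one conjugate of \( h \) and one of \( h^{-1} \). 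It remains, once more by the fragmentation--absorption circle of ideas with the count optimized, to write an arbitrary element of \( H(M) \) as a product of at most four homeomorphisms each supported in a \( \Homeo(M) \)-conjugate of such a \( U \) (carrying the displacement property along the conjugator); this yields the asserted bound of eight conjugates of \( h^{\pm 1} \), and hence uniform normal generation. The only phenomenon beyond the classical arguments of Anderson and Calegari--Freedman for \( \bS^2 \) is the need to route every support toward a maximal end so that the infinite products converge, and the self-similar hypothesis is precisely what makes this routing possible.
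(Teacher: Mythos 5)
First, a point of comparison: the paper does not prove this theorem at all --- it is imported from the cited references (with part (3) due to Malestein--Tao and part (4) building on Anderson and Calegari--Chen) --- so the relevant benchmarks are those arguments and the parallel ones the paper runs in the finite-maximal-end case. Your parts (1)--(3) follow essentially that route: a fragmentation lemma producing a uniformly bounded number of factors, each supported in a subsurface admitting a self-similar ``shift'' \( t \), followed by the infinite swindle \( f = [F,t] \) with \( F = \prod_{i\ge 0} t^i f t^{-i} \). This is exactly the mechanism of the cited proofs and of the paper's own Proposition~\ref{prop:fragmentation}, Lemma~\ref{lem:commutator}, and Theorem~\ref{thm:distortion2}, and you correctly isolate the fragmentation--absorption lemma as the place where perfect self-similarity does the work.

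Part (4), however, has a genuine gap. You run the swindle with \( h \) itself as the shift, which requires a subsurface \( U \) with \( h(U), h^2(U), \ldots \) pairwise disjoint and locally finite, so that \( F = \prod_{i\ge 0} h^i f h^{-i} \) converges. A homeomorphism acting nontrivially on the maximal ends need not have such wandering dynamics: an involution exchanging two clopen halves of the end space satisfies \( h^2 = \mathrm{id} \), so \( h^2(U) = U \) for every \( U \) and no such \( U \) exists. These finite-order elements are squarely within the scope of the statement, and the paper relies on them (the involution normal generators behind Corollary~\ref{cor:rotation} in the perfect case come precisely from Theorem~\ref{thm:vlamis}(4)). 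The cited proof instead uses the single-displacement commutator trick: if \( h(V)\cap V = \varnothing \) and \( a,b \) are supported in \( V \), then \( [a,h]=a\cdot(ha^{-1}h^{-1}) \) with the second factor commuting with \( b \), whence \( [a,b]=[[a,h],b] \), exhibiting \( [a,b] \) as a product of two conjugates of \( h \) and two of \( h^{-1} \). Combining this with commutator width two --- each commutator arranged to have support displaced by a conjugate of \( h \) --- gives the bound of eight. Your count of eight (four factors times two conjugates each) is an artifact of the flawed swindle; the correct accounting is two commutators times four conjugates each.
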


As noted throughout the discussion of our results above, we  prove Theorem~\ref{thm:distortion}, Theorem~\ref{thm:perfect}, and Theorem~\ref{thm:dilation} in the case of a finite set of maximal ends and refer to Theorem~\ref{thm:vlamis} for the case of a perfect set of maximal ends.
We stress that the proofs in the finite maximal end case are distinct from the perfect set of maximal ends case. 

\subsection*{Applications and questions}

Strong distortion is stronger than several common properties in the literature; we quickly recall several such properties here and refer the reader to the introduction of \cite{LeRouxStrong} for a more detailed discussion. A group is \emph{strongly bounded} if every length function on the group is bounded (equivalently, the group has bounded diameter in every left-invariant metric); it has \emph{uncountable cofinality} if it cannot be written as a strictly increasing union of countably many proper subgroups; and it has the \emph{Schreier property} if every countable sequence of elements is contained in a finitely generated subgroup.

It is an exercise to show that a countable group (1) is strongly bounded if and only if it is finite, and (2) has uncountable cofinality if and only if it is finitely generated. 
Hence, strong boundedness and uncountable cofinality are meant to naturally generalize, in a nontrivial way, the notions of finite and finitely generated, respectively, to the setting of uncountable groups. 
As a consequence of Theorem~\ref{thm:distortion}, we have the following.

\begin{Cor}
\label{cor:distortion}
The homeomorphism group of a telescoping 2-manifold is strongly bounded, has uncountable cofinality, and has the Schreier property. 
\qed
\end{Cor}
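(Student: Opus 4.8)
The plan is to derive all three properties as purely formal consequences of strong distortion (Theorem~\ref{thm:distortion}), following the template established by Calegari--Freedman \cite{CalegariDistortion} and Le Roux--Mann \cite{LeRouxStrong}; no further input about the topology of telescoping 2-manifolds is needed. Write \( G = \Homeo(M) \), and let \( m \in \bn \) and \( \{w_n\}_{n \in \bn} \subset \bn \) be the data witnessing that \( G \) is strongly distorted.

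For the Schreier property, given any sequence \( \{g_n\}_{n\in\bn} \) in \( G \), strong distortion produces a set \( \mathcal S \subset G \) with \( |\mathcal S| = m \) and \( g_n \in \mathcal S^{w_n} \subseteq \langle \mathcal S \rangle \) for every \( n \); thus \( \{g_n\}_{n\in\bn} \) is contained in the finitely generated subgroup \( \langle \mathcal S\rangle \). For uncountable cofinality, suppose toward a contradiction that \( G = \bigcup_{n\in\bn} G_n \) for some strictly increasing chain \( G_1 \subsetneq G_2 \subsetneq \cdots \) of proper subgroups. Choose \( g_n \in G \ssm G_n \) for each \( n \), and apply strong distortion to \( \{g_n\}_{n\in\bn} \) to obtain \( \mathcal S = \{s_1, \ldots, s_m\} \) with \( g_n \in \mathcal S^{w_n} \). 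Each \( s_i \) lies in some \( G_{k_i} \), so with \( N = \max_i k_i \) we have \( \mathcal S \subseteq G_N \), hence \( g_N \in \mathcal S^{w_N} \subseteq G_N \), contradicting the choice of \( g_N \).

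For strong boundedness, let \( \ell \co G \to [0,\infty) \) be a length function, so \( \ell(1) = 0 \), \( \ell(g) = \ell(g^{-1}) \), and \( \ell(gh) \leq \ell(g) + \ell(h) \). If \( \ell \) were unbounded, then for each \( n \) we could pick \( g_n \in G \) with \( \ell(g_n) \geq n\, w_n \); strong distortion applied to \( \{g_n\}_{n\in\bn} \) yields \( \mathcal S \) with \( |\mathcal S| = m \) and \( g_n \in \mathcal S^{w_n} \), so \( \ell(g_n) \leq w_n \max_{s \in \mathcal S} \ell(s) \), and therefore \( \max_{s \in \mathcal S} \ell(s) \geq n \) for all \( n \) --- impossible, since \( \mathcal S \) is a fixed finite set. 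Hence every length function on \( G \) is bounded, equivalently \( G \) has bounded diameter in every left-invariant metric. The only point requiring care is the order of quantifiers in this last argument: one must choose the \( g_n \) so that \( \ell(g_n) \) outgrows the \emph{a priori} unknown exponent \( w_n \), which is legitimate precisely because the \( w_n \) depend only on \( G \) and not on the chosen sequence. Beyond this, the corollary is entirely formal, with all of the substance residing in Theorem~\ref{thm:distortion}.
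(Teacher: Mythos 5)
Your proof is correct and matches the paper's approach: the paper treats the corollary as an immediate formal consequence of Theorem~\ref{thm:distortion}, deferring the standard implications to the introduction of Le Roux--Mann, and your write-up simply supplies those standard arguments in full (including the correct handling of the quantifier order in the strong boundedness step, which works precisely because \( m \) and \( \{w_n\} \) are fixed before the sequence is chosen).
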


As the composition of a length function and a homomorphism is again a length function, a strongly bounded group cannot surject onto a group with an unbounded length function. 
It is an exercise to show that every denumerable group admits an unbounded length function, and hence we have the following corollary to Theorem~\ref{thm:distortion}.
 
\begin{Cor}
The homeomorphism group of a telescoping 2-manifold does not have a normal subgroup of denumerable index.
\qed
\end{Cor}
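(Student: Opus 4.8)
The plan is to argue by contradiction, using only the strong boundedness of $\Homeo(M)$ recorded in Corollary~\ref{cor:distortion}. Let $M$ be a telescoping 2-manifold, write $G = \Homeo(M)$, and suppose $N \trianglelefteq G$ has denumerable index, so that $Q = G/N$ is a countably infinite group. I would first establish the elementary fact that every countably infinite group admits an unbounded length function $\ell \colon Q \to [0,\infty)$ (i.e.\ one with $\ell(1) = 0$, $\ell(q^{-1}) = \ell(q)$, $\ell(q_1 q_2) \le \ell(q_1) + \ell(q_2)$, and unbounded image), and then pull it back along the quotient map to contradict Corollary~\ref{cor:distortion}. Note that the hypothesis must be denumerable rather than merely countable index, since $G$ does admit finite-index normal subgroups, e.g.\ $H(M)$.

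To produce the length function on $Q$, I would distinguish two cases. If $Q$ is finitely generated, take $\ell$ to be the word length with respect to some finite symmetric generating set; since every ball in the corresponding word metric is finite and $Q$ is infinite, $\ell$ is unbounded. If $Q$ is not finitely generated, fix an enumeration $Q = \{1, q_1, q_2, \ldots\}$ and set $H_n = \langle q_1, \ldots, q_n \rangle$; these are finitely generated, hence proper, subgroups forming an increasing chain with $\bigcup_n H_n = Q$. Define $\ell(1) = 0$ and $\ell(q) = \min\{\, n : q \in H_n \,\}$ for $q \ne 1$. Then $\ell(q^{-1}) = \ell(q)$ because each $H_n$ is a subgroup, and if $\ell(q_1) \le \ell(q_2)$ then $q_1, q_2 \in H_{\ell(q_2)}$, so $q_1 q_2 \in H_{\ell(q_2)}$, giving $\ell(q_1 q_2) \le \max\{\ell(q_1), \ell(q_2)\} \le \ell(q_1) + \ell(q_2)$; finally $\ell$ is unbounded since $H_n \subsetneq Q$ for every $n$.

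To finish, let $\pi \colon G \to Q$ be the quotient homomorphism. Since $\pi$ is a surjective homomorphism and $\ell$ is a length function, $\ell \circ \pi$ is a length function on $G$ with the same (unbounded) image. But $\Homeo(M)$ is strongly bounded by Corollary~\ref{cor:distortion}, so $\ell \circ \pi$ must be bounded, a contradiction. Hence $G$ has no normal subgroup of denumerable index. I do not anticipate a real obstacle: the single nontrivial input is the already-proven strong boundedness of $\Homeo(M)$, and what remains is the routine dichotomy above, whose only purpose is to confirm the stated exercise that a countably infinite group carries an unbounded length function.
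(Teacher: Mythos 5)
Your argument is exactly the one the paper intends: it pulls back an unbounded length function on the denumerable quotient and contradicts the strong boundedness recorded in Corollary~\ref{cor:distortion}, with your two-case construction correctly filling in the ``exercise'' the paper leaves to the reader. The proof is correct and takes essentially the same approach as the paper.
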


As a consequence of Theorem~\ref{thm:vlamis}(4), the homeomorphism group of a telescoping 2-manifold with a perfect set of maximal ends has finitely many finite-index normal subgroups; in particular, if the underlying 2-manifold is orientable, then the only proper finite-index normal subgroup is the group of orientation-preserving homeomorphisms, and if it is not orientable, then there are no finite-index proper normal subgroups. 

\begin{Question}
Does the homeomorphism group of a telescoping 2-manifold with finitely many maximal ends have finitely many finite-index normal subgroups? If so, can we describe them? 
\end{Question}

We note that Corollary~\ref{cor:distortion} partially strengthens a result of Mann--Rafi. 
A topological group \( G \) is \emph{coarsely bounded} if every continuous left-invariant metric on the group is bounded. 
Note that a group is strongly bounded if and only if it is coarsely bounded when equipped with the discrete topology.
Clearly, a strongly bounded topological group is coarsely bounded. 
The homeomorphism group of a manifold is a topological group when endowed with the compact-open topology. 
Up to a mild hypothesis, Mann--Rafi \cite{MannLarge} classified the infinite-type 2-manifolds whose homeomorphism groups are coarsely bounded; in particular, they are the self-similar 2-manifolds and the telescoping 2-manifolds\footnote{Mann--Rafi's work is in the setting of mapping class groups, but their work is readily adaptable to the setting of homeomorphism groups.}. 
Given Mann--Rafi's classification of coarsely bounded homeomorphism groups for infinite-type surfaces, it is natural to ask the following question.

\begin{Question}
\label{q:sb}
If \( M \) is a 2-manifold such that \( \Homeo(M) \) is strongly bounded, is \( M \) telescoping? 
\end{Question}

As mentioned earlier, Mann--Rafi's classification has an additional hypothesis, which they refer to as tameness, and so it is reasonable to consider the above question with the same hypothesis. 
Note that Mann--Rafi's classification of the coarsely bounded homeomorphism groups does not answer Question~\ref{q:sb}: indeed, by the work of Domat--Dickmann \cite{DomatBig}, there exists a self-similar 2-manifold with a unique maximal end  whose homeomorphism group  is not strongly bounded (on account of having infinite abelianization); in particular, by Corollary~\ref{cor:distortion}, such a 2-manifold cannot be telescoping. 
Therefore, the class of 2-manifolds with strongly bounded homeomorphism groups is a proper subset of those with coarsely bounded homeomorphism group.

As soon as a finite-type surface has enough topology, the mapping class group (defined below) is denumerable, and hence the homeomorphism group admits an unbounded length function. 
So, in the finite-type setting, Question~\ref{q:sb} reduces to several exceptional cases, for which the author is unaware of any results.

\begin{Question}
Are the homeomorphism groups of the projective plane, the Klein bottle, the M\"obius band, the twice-punctured projective plane, and the thrice-punctured sphere coarsely or strongly bounded?
\end{Question}

Theorem~\ref{thm:dilation} shows the existence of normal generators for \( H(M) \), so it is natural to ask if we can classify all normal generators.
This was accomplished in the case of a non-empty perfect set of maximal ends in Theorem~\ref{thm:vlamis}(4); in particular, an element of \( H(M) \) is a normal generator if and only if it induces a nontrivial permutation on the set of maximal ends. 
(The case of empty set of ends corresponds to the 2-sphere, in which case Anderson \cite{AndersonAlgebraic} showed that \( H(M) \) is simple, and hence every nontrivial element is a normal generator.)

\begin{Problem}
Given a telescoping 2-manifold \( M \), classify the normal generators of \( H(M) \). 
\end{Problem}

It follows from Theorem~\ref{thm:vlamis}(4)---but was first observed by Malestein--Tao \cite{MalesteinSelf}---that if \( M \) is a telescoping 2-manifold with a perfect set of maximal ends, then \( H(M) \) is normally generated by an involution.
As a corollary to Theorem~\ref{thm:dilation}, we give an analogous statement in the case of two maximal ends. 

\begin{Cor}
\label{cor:rotation}
Let \( M \) be a telescoping 2-manifold with at least two maximal ends. 
Let \( G = \Homeo^+(M) \) when \( M \) is orientable, and otherwise let \( G = \Homeo(M) \). 
Then there exists an involution \( \theta \in G \) such that every element of \( G \) can be expressed as product of at most 17 conjugates of \( \theta \). 
\end{Cor}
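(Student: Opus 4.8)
The plan is to split along the dichotomy in the hypothesis. A telescoping 2-manifold has a singleton, doubleton, or perfect set of maximal ends, so ``at least two'' means either a doubleton or a nonempty perfect set. In the perfect-set case the notation of the introduction gives $G = H(M)$, and I would simply combine two facts recorded above: by the Malestein--Tao observation noted just before the statement, $H(M)$ contains an involution $\theta$ acting nontrivially on the maximal ends (such involutions are abundant when the maximal-end set is perfect), and Theorem~\ref{thm:vlamis}(4) writes every element of $H(M)$ as a product of at most eight conjugates of $\theta$ and $\theta^{-1} = \theta$. Since $8 \le 17$, this case is done, so for the rest of the plan assume $M$ has exactly two maximal ends $e_+$ and $e_-$.

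The algebraic engine is the following elementary identity, valid in any group: if $\theta$ is an involution, $\alpha$ is arbitrary, $\tau = [\alpha,\theta] = \alpha\theta\alpha^{-1}\theta^{-1}$, and $\beta = \theta\alpha$, then $\theta\tau\theta = \tau^{-1}$ and $\theta\tau = \beta\theta\beta^{-1}$; hence $\tau = \theta\cdot(\beta\theta\beta^{-1})$ and $\tau^{-1} = (\beta\theta\beta^{-1})\cdot\theta$ are each products of two conjugates of $\theta$, and therefore so is any conjugate of $\tau$ or $\tau^{-1}$. Thus it suffices to produce an involution $\theta \in G$ interchanging $e_+$ and $e_-$ together with an $\alpha$ for which $\tau = [\alpha,\theta]$ is a dilation at infinity lying in $H(M)$.

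For the construction I would take $\theta$ to be an involution swapping $e_+$ and $e_-$ (orientation-preserving when $M$ is orientable, so that $\theta \in G$), furnished by the topological structure of $M$ from Section~\ref{sec:topology}; and, using the self-similarity of $M$ near $e_+$ coming from a maximal telescope there, I would take $\alpha \in H(M)$ supported away from $e_-$ and contracting at $e_+$. Then $[\alpha,\theta] = \alpha\,(\theta\alpha\theta^{-1})^{-1}$ agrees with $\alpha$ near $e_+$, hence contracts at $e_+$, and agrees with $(\theta\alpha\theta^{-1})^{-1}$ near $e_-$, hence expands at $e_-$ by the $\theta$-symmetry; so it has the contracting/expanding behavior at both maximal ends demanded of a dilation at infinity. (The prototype is $\bS^1 \times \br$ with $\theta(z,t) = (\bar z, -t)$ and $\alpha$ a unit translation, for which $[\alpha,\theta]$ is translation by $2$.) Since $\tau = [\alpha,\theta] \in [G,G] \subseteq [\Homeo(M),\Homeo(M)] = H(M)$ by Theorem~\ref{thm:perfect}, we indeed have $\tau \in H(M)$.

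It then remains to count. Applying the quantitative normal-generation statement Theorem~\ref{thm:dilation2} to the dilation $\tau$, every element of $H(M)$ is a product of at most eight conjugates of $\tau$ and $\tau^{-1}$, hence, by the identity above, a product of at most $16$ conjugates of $\theta$. Finally, the action of $G$ on $\{e_+,e_-\}$ has kernel $H(M)$ and is onto because $\theta$ swaps the ends, so $[G : H(M)] = 2$ and $\theta \notin H(M)$; thus any $g \in G \setminus H(M)$ has $\theta g \in H(M)$, so $g = \theta\cdot(\theta g)$ is a product of at most $17$ conjugates of $\theta$, while for $g \in H(M)$ the bound $16 \le 17$ already holds. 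I expect the one genuinely nontrivial step to be the geometric construction in the third paragraph: exhibiting, for an arbitrary telescoping 2-manifold with two maximal ends (not just the model $\bS^1 \times \br$), an orientation-compatible involution interchanging $e_+$ and $e_-$, and verifying that $[\alpha,\theta]$ satisfies the formal definition of a dilation at infinity (Definition~\ref{def:dilation}); this is exactly where the structure theory of Section~\ref{sec:topology}---in particular that $M$ has a separating curve whose two sides are homeomorphic telescope-like pieces seeing $e_+$ and $e_-$ respectively---must be used.
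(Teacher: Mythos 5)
Your overall route is the same as the paper's: dispose of the perfect-set case via Theorem~\ref{thm:vlamis}(4); in the two-end case build an involution \( \theta \) interchanging the maximal ends, observe that a suitable product of two conjugates of \( \theta \) is a dilation at infinity, feed that dilation into Theorem~\ref{thm:dilation2}, and spend one extra conjugate on the nontrivial coset of \( H(M) \) in \( G \). The paper's dilation \( \theta^\tau \circ \theta \) (with \( \tau \) the shift on \( P \tilde\times \bz \)) is literally the commutator \( [\tau,\theta] \) since \( \theta^{-1} = \theta \), so your identity \( [\alpha,\theta] = \theta^\alpha \cdot \theta \) is the same device; the paper just realizes everything concretely via Theorem~\ref{thm:topology}(3), taking \( \theta(x,n) = (\rho(x),-n) \) and \( \tau(x,n) = (x,n+1) \), which makes \( \theta^\tau\circ\theta \) equal to translation by two and hence visibly a \emph{strong} dilation.

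Two steps in your write-up need repair. First, Theorem~\ref{thm:dilation2} is stated for \emph{strong} dilations, while you only verify that \( [\alpha,\theta] \) is a dilation at infinity in the sense of Definition~\ref{def:dilation}; routing through Corollary~\ref{cor:dilation} instead costs an extra factor of \( N \). You should choose \( \alpha \) to translate by one tube of a tubular decomposition of a telescope at \( e_+ \) (or simply adopt the paper's explicit model), so that \( T \ssm [\alpha,\theta](T) \) is a tube at each maximal end. Second, and more seriously for the constant, you quote the wrong case of Theorem~\ref{thm:dilation2}: the bound of eight conjugates of \( \sigma^{\pm 1} \) is for the unique-maximal-end case, which is excluded here, and the two-maximal-end case gives sixteen. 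With the correct constant your conversion (two conjugates of \( \theta \) per conjugate of the dilation, plus one for the coset) produces \( 2\cdot 16 + 1 = 33 \) rather than \( 17 \), so as written your arithmetic does not deliver the stated bound; the paper's own proof is a one-line appeal to Theorem~\ref{thm:dilation2} at this point, and if you want to match its constant you need to make that bookkeeping explicit rather than rely on the eight-conjugate count.
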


The proof of Corollary~\ref{cor:rotation} is given in Section~\ref{sec:dilations}.
We finish this section with asking if the above corollary holds in the case of a single maximal end.

\begin{Question}
Let \( M \) be a telescoping 2-manifold with a unique maximal end, and let \( G \) be as in Corollary~\ref{cor:rotation}. 
Can \( G \) be generated by involutions? Can \( G \) be generated by torsion?
\end{Question}

\subsection*{Mapping class groups}

The  mapping class group \( \mcg(M) \) of a(n) (orientable) 2-manifold \( M \) is the group of isotopy classes of (orientation-preserving) homeomorphisms \( M \to M \).
In particular, \( \mcg(M) \) is a quotient of a finite-index subgroup of \( \Homeo(M) \). 
For a telescoping 2-manifold \( M \), let \( \Gamma( M ) \) denote the image of \( H(M) \) under this quotient. 

We say an element of \( \Gamma(M) \) is a dilation at infinity if it is has a representative homeomorphism that is a dilation at infinity.
Now, observe that the properties of strong distortion, uniform perfectness, and being a uniform normal generator are inherited by quotients. 
Therefore, Theorem~\ref{thm:distortion}, Theorem~\ref{thm:perfect}, and Theorem~\ref{thm:dilation} can be stated in terms of mapping class groups, which we record here as a single corollary. 

\begin{Cor}
\label{cor:mcg}
If \( M \) is a telescoping 2-manifold, then
\begin{enumerate}[(i)]
\item \( \mcg(M) \) is strongly distorted,
\item \( \Gamma(M) = [\mcg(M), \mcg(M)] \),
\item \( \Gamma(M) \) is uniformly perfect and has commutator width at most two, 
\item each dilation of \( \Gamma(M) \) uniformly generates \( \Gamma(M) \), and
\item there exists an involution \( \theta \in \mcg(M) \) such that each element of \( \mcg(M) \) is a product of at most 17 conjugates of \( \theta \). \qed
\end{enumerate}
\end{Cor}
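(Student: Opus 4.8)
The plan is to deduce all five items from the corresponding statements for $\Homeo(M)$ and $H(M)$ established earlier, using only that strong distortion, uniform perfectness with a prescribed commutator-width bound, and the property ``every element is a product of at most $p$ conjugates of a fixed element and its inverse'' are inherited by quotient groups, while strong distortion is moreover inherited by finite-index overgroups. Write $q\colon G_0 \to \mcg(M)$ for the defining quotient map, where $G_0 = \Homeo^+(M)$ if $M$ is orientable and $G_0 = \Homeo(M)$ otherwise, so that $\Gamma(M) = q(H(M))$ by definition. Since $H(M)$ has finite index in $\Homeo(M)$ and $H(M) \le G_0 \le \Homeo(M)$, it has finite index in $G_0$; hence $\Gamma(M) = q(H(M))$ has finite index in $\mcg(M) = q(G_0)$.

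Items (i), (iii), and (iv) are then immediate. For (i): $H(M)$ is strongly distorted (this is what is actually established in the course of proving Theorem~\ref{thm:distortion}; for the perfect case, Theorem~\ref{thm:vlamis}(1)), so its quotient $\Gamma(M)$ is strongly distorted, and hence so is its finite-index overgroup $\mcg(M)$. For (iii): by Theorem~\ref{thm:perfect} every element of $H(M)$ is a product of at most two commutators of elements of $H(M)$, and applying $q$ gives the same for $\Gamma(M)$. For (iv): a dilation at infinity $\bar\sigma \in \Gamma(M)$ has, by definition, a representative $\sigma \in H(M)$ that is a dilation at infinity, so Theorem~\ref{thm:dilation} (in the finite-maximal-end case; the perfect case being covered by Theorem~\ref{thm:vlamis}(4)) supplies a constant $p$ with every element of $H(M)$ a product of at most $p$ conjugates of $\sigma^{\pm1}$ by elements of $H(M)$; pushing this through $q$ yields the analogous statement for $\bar\sigma$ in $\Gamma(M)$.

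Item (ii) needs one extra observation, because $\mcg(M)$ is a quotient of $G_0$ rather than of all of $\Homeo(M)$. Since $q$ is onto, $[\mcg(M),\mcg(M)] = q([G_0,G_0])$, so it suffices to show $[G_0,G_0] = H(M)$. One inclusion is automatic: $[G_0,G_0] \subseteq [\Homeo(M),\Homeo(M)] = H(M)$ by Theorem~\ref{thm:perfect}. For the reverse, uniform perfectness of $H(M)$ shows every element of $H(M)$ is a product of at most two commutators of elements of $H(M) \le G_0$, and hence lies in $[G_0,G_0]$. Thus $[G_0,G_0] = H(M)$ and $[\mcg(M),\mcg(M)] = q(H(M)) = \Gamma(M)$.

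Finally, item (v). When $M$ has exactly two maximal ends, Corollary~\ref{cor:rotation} (whose group $G$ is precisely our $G_0$) supplies an involution $\theta \in G_0$ with every element of $G_0$ a product of at most $17$ conjugates of $\theta$; then $q(\theta)^2 = 1$, the same bound holds in $\mcg(M)$, and $q(\theta)$ is a genuine involution as soon as $\mcg(M)$ is nontrivial. When the maximal ends form a perfect set the argument is identical, using that $H(M) = G_0$ is normally generated by an involution with bound $8 \le 17$ (Malestein--Tao; see Theorem~\ref{thm:vlamis}(4)). The case of a unique maximal end is the one part that is not pure quotient inheritance, and I expect it to be the main obstacle: Corollary~\ref{cor:rotation} does not apply, and while $\mcg(\br^2)$ is trivial, in general one must separately exhibit an involution that normally generates $H(M)$ (equivalently $\Gamma(M)$) with an explicit bound --- for instance, by factoring a suitable dilation at infinity as a bounded-length product of conjugate involutions (reflections) in $H(M)$ and then invoking Theorem~\ref{thm:dilation}. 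This is where any genuinely new argument is concentrated.
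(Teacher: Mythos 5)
Your argument is correct and is essentially the paper's own proof: the paper justifies Corollary~\ref{cor:mcg} entirely by the sentence preceding it (strong distortion, uniform perfectness, and uniform normal generation pass to quotients) and attaches the qed symbol to the statement itself. Your extra step for item (ii) --- establishing \( [G_0,G_0]=H(M) \) by combining \( [G_0,G_0]\subseteq[\Homeo(M),\Homeo(M)]=H(M) \) with the uniform perfectness of \( H(M) \) --- is genuinely needed, since \( \mcg(M) \) is a quotient of \( G_0 \) rather than of all of \( \Homeo(M) \), and the paper leaves it implicit.

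Your concern about item (v) is well founded, and you should not expect to find the missing argument in the paper. The only input for the involution statement is Corollary~\ref{cor:rotation}, which is proved only for telescoping 2-manifolds with at least two maximal ends (the two-ended case via the explicit involution \( \theta(x,n)=(\rho(x),-n) \) on \( P\tilde\times\bz \), the perfect-set case via Theorem~\ref{thm:vlamis}(4)). For a unique maximal end the paper gives no construction; indeed, immediately after Corollary~\ref{cor:rotation} it poses as an open question whether \( H(M) \) is even generated by involutions in that case. Since there are telescoping 2-manifolds with a unique maximal end and nontrivial mapping class group (for instance Example (2) with \( C_1\cap C_2 \) a single point), item (v) as literally stated is not covered by anything in the paper for such \( M \); it should be read with the implicit hypothesis that \( M \) has at least two maximal ends. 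Your proposed workaround (factoring a dilation at infinity into boundedly many conjugate involutions) is exactly the kind of argument that would be required, but it is not carried out in the paper.
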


With Corollary~\ref{cor:mcg}, this article can be viewed as part of the rapidly growing literature on mapping class groups of infinite-type surfaces, known as \emph{big mapping class groups} (see \cite{AramayonaBig} for a survey on big mapping class groups).

\subsection*{Acknowledgements}
The author is grateful to the anonymous referee for their valuable comments and to Kathryn Mann, Fr\'ed\'eric LeRoux, and George Domat for helpful discussions. 
The author acknowledges support from NSF DMS-2212922, PSC-CUNY Award 65331-00 53, and PSC-CUNY Award 66435-00 54. 


\section{Preliminaries}
\label{sec:prelim}

\subsection{Basic notions}
Let \( X \) be a topological space.
Given a subset \( U \) of \( X \), its closure is denoted \( \overline U \); its interior is denoted \( U^\mathrm{o} \); and its \emph{boundary} is the set \( \partial U = \overline U \ssm U^\mathrm o \).

Let \( f \co X \to X \) be a homeomorphism. 
The \emph{support} of \( f \), denoted \( \supp(f) \), is the closure of the set \( \{ x \in X : f(x) \neq x \} \). 
If \( V \subset X \) such that \( \supp(f) \subset V \), then we say that \( f \) is \emph{supported} in \( V \). 

A family of subsets \( \{ V_n\}_{n\in\bn} \) of \( X \) is \emph{locally finite} if, given a compact subset \( K \) of \( X \), \( V_n \cap K \neq \varnothing \) for only finitely many \( n \in \bn \). 
Given homeomorphisms \( f_1, \ldots, f_n \co X \to X \), we let \[ \prod_{j=1}^n f_j = f_1 \circ f_2 \circ \cdots \circ f_n. \]
With this convention, given a sequence of self-homeomorphisms \( \{f_n\}_{n\in\bn} \) of \( X \) such that \( \{ \supp(f_n)\}_{n\in\bn} \) is locally finite, we can define the infinite product \( F =  \prod_{n \in \bn } f_n \) as follows:
given a point \( x \in X \), the maximum of  \( \{ k \in \bn : x \in \supp(f_k) \} \) exists, call it \( m_x \); define \( F(x) = (f_1 \circ f_2 \circ \cdots \circ f_{m_x})(x) \). 
It is readily verified that \( F \co X \to X \) is a homeomorphism. 

We say a subset \( V \) of \( X \) \emph{separates} a point/subset/end \( x \) of \( X \) from another point/subset/end \( y \) of \( X \) if \( x \) and \( y \) are in distinct components of \( X \ssm V \). 
Given two subsets \( Y \) and \( Z \) of \( X \), we say that \( Y \) and \( Z \) are \emph{abstractly homeomorphic} if there is a homeomorphism \( f \co Y \to Z \), and we say they are \emph{ambiently homeomorphic} if there exists a homeomorphism \( h \co X \to X \) such that \( h(Y) = Z \).

\subsection{Surfaces}

All surfaces are assumed to be connected, second countable, and Hausdorff; we use the term 2-manifold to refer to a surface with empty boundary. 
A 2-manifold is of \emph{finite type} if it has finitely generated fundamental group (or equivalently, can be realized as the interior of a compact surface); otherwise, it is of \emph{infinite type}. 

Let \( S \) be a surface. 
We will follow the convention that a \emph{subsurface} of \( S \) refers to a closed subset of \( S \) that is a surface with respect to the subspace topology. 
We let \( \Ends(S) \) denote the space of ends of \( S \), which is a compact, second countable, zero-dimensional, Hausdorff topological space; in particular, it is homeomorphic to a closed subset of \( 2^\bn \). 
If \( U \subset S \) is such that \( \partial U \) is compact, then \( U \) determines a clopen subset of \( \Ends(S) \), which we denote \( \widehat U \).
In fact, every clopen subset of \( \Ends(S) \) is of the form \( \widehat \Sigma \) for some subsurface \( \Sigma \) with connected compact boundary. 
If \( e \) is an end of \( S \) in \( \widehat U \), we say that \( U \) is a \emph{neighborhood} of \( e \) in \( S \) or \( U \) \emph{sees} \( e \). 

An end of \( S \) is \emph{planar} if it admits a neighborhood in \( S \) that can be embedded in \( \br^2 \); otherwise, it is \emph{non-planar}.
An end of \( S \) is \emph{orientable} if it admits an orientable neighborhood in \( S \); otherwise, it is \emph{non-orientable}.
Let \( \Ends_{np}(S) \) denote the set of  non-planar ends of \( S \), and let \( \Ends_{no}(S) \) denote the set of non-orientable ends of \( S \). 
Observe that \( \Ends_{no}(S) \) and \( \Ends_{np}(S) \) are closed subsets of \( \Ends(S) \) and that \( \Ends_{no}(S) \subset \Ends_{np}(S) \).

An essential tool is the classification of surfaces (see Richards \cite{RichardsClassification}):
Two 2-manifolds \( M \) and \( M' \) are homeomorphic if and only if they have the same genus, the same orientability class\footnote{There are four orientability classes: a surface can be orientable or non-orientable of even, odd, or infinite non-orientability.  We forgo the definitions here, but note that a telescoping 2-manifold is either orientable or infinitely non-orientable.}, and \( (\Ends(M), \Ends_{np}(M), \Ends_{no}(M)) \) and \( (\Ends(M'), \Ends_{np}(M'), \Ends_{no}(M')) \) are homeomorphic as triples of spaces. 
(More detail can be found in \cite[Section~3]{VlamisHomeomorphism}). 

Every homeomorphism \( f \co M \to M \) induces a homeomorphism \( \hat f \co \Ends(M) \to \Ends(M) \).  
It follows from Richards's proof of the classification of surfaces that every self-homeomorphism of \( \Ends(M) \) that preserves \( \Ends_{np}(M) \) and \( \Ends_{no}(M) \) is induced by a self-homeomorphism of \( M \).

Following Mann--Rafi \cite{MannLarge}, we define a preorder \( \preceq \) on \( \Ends(S) \) as follows:
Given two ends \( \varepsilon_1, \varepsilon_2 \in \Ends(S) \), we write \( \varepsilon_1 \preceq \varepsilon_2 \) if every open neighborhood of \( \varepsilon_2 \) in \( \Ends(S) \) contains an open set homeomorphic to an open neighborhood of \( \varepsilon_1 \).
Two ends \( \varepsilon_1 \) and \( \varepsilon_2 \) are \emph{comparable} if \( \varepsilon_1 \preceq \varepsilon_2 \) or \( \varepsilon_2 \preceq \varepsilon_1 \); they are are \emph{equivalent} if \( \ve_1 \preceq \ve_2 \) and \( \ve_2 \preceq \ve_1 \). 
An end \( \mu \) is \emph{maximal} if \( \mu \preceq \varepsilon \) implies \( \varepsilon \preceq \mu \) whenever \( \varepsilon \) is an end comparable to \( \mu \). 


\section{Topology of telescoping 2-manifolds}
\label{sec:topology}

We begin this section with giving examples of telescoping 2-manifolds and then continue to develop their topological structure. 

\subsection{Examples}
\label{sec:examples}

As already noted, \( \mathbb S^2 \), \( \mathbb R^2 \), and \( \mathbb S^1 \times \br \) are telescoping, and they serve as our prototypes. 
Since we did not mention it earlier, we point out that every closed disk is a maximal telescope in \( \mathbb S^2 \), the complement of the interior of any closed disk is a maximal telescope in \( \br^2 \), and \( \mathbb S^1 \times [0,\infty) \) is a maximal telescope in \( \mathbb S^1 \times \br \). 
We already discussed that these are the only finite-type telescoping 2-manifolds, so we will now construct infinite-type examples.
Moreover, for each example given below, we given an example of a maximal telescope in the manifold.

We do not rigorously argue that the examples below are in fact telescoping, but simply note this can be checked using a combination of the Jordan--Sch\"onflies theorem, Brouwer's topological characterization of a Cantor space, and the classification of surfaces. 
\begin{enumerate}[(1)]
	\item Let \( C \) be a Cantor space embedded in \( \mathbb S^2 \). 
		Then, \( \mathbb S^2 \ssm C \) is telescoping, and if \( D \) is a disk in \( \mathbb S^2 \) partitioning \( C \) into two clopen sets, then \(  D \ssm C \) is a maximal telescope.
	\item Let \( C_1 \) and \( C_2 \) be Cantor spaces embedded in \( \mathbb S^2 \)  such that \( C_1 \cap C_2 \) is nonempty with at most two points. 
		Let \( A \) be a discrete set of points in \( \mathbb S^2 \ssm (C_1 \cup C_2) \) such that \( \overline A = A \cup C_1 \).  
		Letting \( E = C_1 \cup C_2 \cup A \), we have that \( \mathbb S^2 \ssm E \) is telescoping and, if \( D \) in \( \mathbb S^2 \) is a disk such that  \( D \cap  C_1 \cap C_2 \) is a singleton, then \( D \ssm E \) is a maximal telescope.
	\item Keeping the same notation as in the previous example and taking a connected sum of \( \bS^2 \ssm (C_1 \cup C_2) \) and infinitely many tori (resp., projective planes) using small neighborhoods of the points in \( A \) results in an infinite genus (resp., non-orientable) telescoping 2-manifold.
\end{enumerate}

Another description of Examples (2) and (3) is given in Figure~\ref{fig:telescoping}. 

\begin{figure}\
\centering
\includegraphics{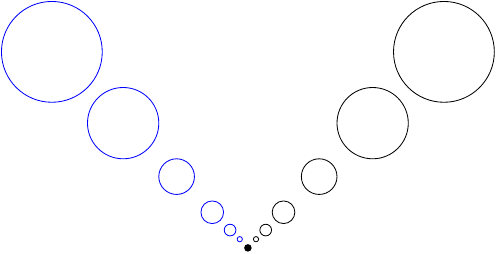}
\caption{To get Example (2), in each blue disk (the sequence of disks on the left) delete a copy of the Cantor set and in each of the black disks (the sequence of disks on the right) delete a copy of a Cantor set and a discrete set of points accumulating onto the Cantor set. To get Example (3), start with Example (2) and then in each black disk use a neighborhood of each point in the deleted discrete set to take a connected sum with a torus.}
\label{fig:telescoping}
\end{figure}

Note that replacing the set \( A \) in Example (2) with a countable set such that whenever the \( \alpha^\text{th} \)-Cantor--Bendixson derivative is non-empty it  accumulates onto \( C_1 \) yields an uncountable family of pairwise non-homeomorphic telescoping 2-manifolds. 
We also note that if \( C_1 \cap C_2 \) is a single point in (2), then the manifold \( \bS^2 \ssm E \) fails to be telescoping with respect to the Mann--Rafi definition of telescoping; hence, the class of Mann--Rafi telescoping 2-manifolds is a proper subclass of the telescoping 2-manifolds as defined here.
In fact, a telescoping 2-manifold---as defined here---is a Mann--Rafi telescoping 2-manifold if and only if it does not have a unique maximal end. 

To finish, we give an example showing that condition (ii) in Definition~\ref{def:telescope} is necessary. 
Consider the orientable two-ended infinite-genus 2-manifold \( L \). 
As \( L \) has only two ends, which are equivalent, they are both maximal, and it therefore makes sense to define \( H(L) \) as we did above, i.e., as the group of orientation-preserving homeomorphisms fixing each end of \( L \). 
Aramayona, Patel, and the author \cite{AramayonaFirst} have shown that there exists an epimorphism \( H(L) \to \bz \), and hence, \( H(L) \) is neither strongly distorted nor uniformly perfect; in particular, \( L \) cannot be telescoping if Theorem~\ref{thm:distortion} and Theorem~\ref{thm:perfect} are to hold.

Now, let \( a \) be a simple closed curve separating the two ends of \( L \), let \( e \) be an end of \( L \), and let \( T \) be the closure of the component of \( L \ssm a \) that sees \( e \). 
If \( S \) is an embedded copy of \( \mathbb D \) in \( T \ssm L \), then it is not difficult to see that \( L \), \( T \), \( e \), and \( S \) satisfy conditions (i) and (iii) of Definition~\ref{def:telescope}. 
However, condition (ii) fails, and hence we see it is a necessary condition for our results.

\subsection{Topological structure}
To motivate this section, let us consider the annulus \( A = \mathbb S^1 \times \br \) and the subset \( P = \mathbb S^1 \times [0,1] \) of \( A \).
If \( \tau \co A \to A \) is given by \( \tau(x,t) = (x, t+1) \), then \( A = \bigcup_{n \in \bz} \tau^n(P) \) and, for every \( m \in \bn \), the subsurface \( P \) is homeomorphic to \( \bigcup_{j=0}^m \tau^j(P) \). 
Equipping \( \br^2 \) with polar coordinates, the above discussion is readily adapted to \( \br^2 \).
The goal of this section is the following: 
\begin{quote}
\emph{Given a telescoping 2-manifold, find a subsurface of the manifold that has the same properties as \( P \) given above.}
\end{quote}
To accomplish this goal, we work through a sequence of lemmas building up an understanding of the topology of telescoping 2-manifolds. 
The first lemma is an exercise in the definitions and shows that the defining end of a maximal telescopes admits a neighborhood basis consisting of maximal telescopes. 

\begin{Lem}
\label{lem:deep_lens}
Let \( M \) be a telescoping 2-manifold, and let \( T \) be a maximal telescope in \( M \).  
Every separating simple closed curve that separates  \( \partial T \) from the defining end of \( T \) bounds a maximal telescope contained in \( T \). 
\qed
\end{Lem}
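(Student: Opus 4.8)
The plan is to unwind the definitions directly, using condition (iii) of Definition~\ref{def:telescope} as the main engine. Let $T$ be a maximal telescope in $M$ with defining end $e$ and extension $S$, and let $b$ be a separating simple closed curve in $T$ that separates $\partial T$ from $e$. I want to show the component $T_b$ of $M \ssm b$ containing (a neighborhood of) $e$, together with its boundary $b$, is a maximal telescope whose defining end is again $e$ and which is contained in $T$.

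First I would verify that $T_b$ satisfies conditions (i)--(iii) of being a telescope with defining end $e$. Condition (i): I can take $S$ itself (or $T$) as the extension, since $b \subset T \subset S$ and, by the choice of $b$ and the fact that $\partial T$ separates $\partial S$ from $e$, the curve $b = \partial T_b$ separates $\partial S$ from $e$; that $e$ is seen by $T_b$ is immediate from the definition of $T_b$. Condition (ii) is inherited verbatim, since the orbit of $e$ and the partition induced by $S$ have not changed. Condition (iii) is the crux: given any separating simple closed curve $b'$ in $T_b$ separating $b$ from $e$, I must produce a homeomorphism of $M$ fixing $M \ssm S$ pointwise and carrying $b'$ onto $b$. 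Here is where I invoke condition (iii) for the original telescope $T$: such a $b'$ also separates $\partial T$ from $e$ (it is "deeper" than $b$, which is already deeper than $\partial T$), so there is a homeomorphism $h$ of $M$ with $h|_{M\ssm S} = \mathrm{id}$ and $h(b') = \partial T$. The problem is that I want $b'$ sent to $b$, not to $\partial T$. To fix this, I compose with a homeomorphism carrying $\partial T$ to $b$: again by condition (iii) applied to $T$ with the curve $b$ itself, there is a homeomorphism $g$ of $M$ with $g|_{M \ssm S} = \mathrm{id}$ and $g(b) = \partial T$; then $g^{-1} h$ fixes $M \ssm S$ pointwise and sends $b'$ to $b$, as required.

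Next I would check maximality of the telescope $T_b$, i.e.\ that there is a homeomorphism of $M$ sending $M \ssm T_b$ into $T_b$. Since $T$ is a maximal telescope, there is a homeomorphism $\phi$ of $M$ with $\phi(M \ssm T) \subset T$; I would like to upgrade this to push everything outside $T_b$ into $T_b$. Using condition (iii) for $T$ once more (with the curve $b$), there is a homeomorphism $\psi$ of $M$, supported in $S$, with $\psi(b) = \partial T$; then $\psi$ carries $T_b$ onto $T$ (both are the "$e$-side" components of their bounding curves, and $\psi$ is supported in $S \supset T$). Now $\psi^{-1} \phi \psi$ sends $M \ssm T_b$ into $T_b$: indeed $\psi(M \ssm T_b) = M \ssm T$, then $\phi(M \ssm T) \subset T$, and then $\psi^{-1}(T) \subset$ a set contained in $T_b$ after another application --- I would need to be slightly careful and may instead argue directly that $\psi^{-1}\phi'\psi$ works for a suitable conjugate, or simply note that the property "contains a homeomorphic copy of its own complement pushed inward" transfers under the homeomorphism $\psi$ of the pair $(M, T_b) \to (M, T)$. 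Either way, maximality of $T_b$ follows from maximality of $T$ by transport of structure along $\psi$.

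The main obstacle I anticipate is purely bookkeeping: matching up the various "$e$-side" components of $M$ cut along different curves and confirming that the homeomorphisms supplied by condition (iii) really do restrict to homeomorphisms of the relevant pairs (e.g.\ that $\psi(b) = \partial T$ forces $\psi(T_b) = T$). This requires knowing that $b$ and $\partial T$ each bound a unique "deep" component containing $e$ --- which holds because both are separating with compact image and $M$ has the end $e$ on exactly one side --- and that a homeomorphism supported in $S$ sending one bounding curve to the other must send the corresponding deep component to the corresponding deep component. None of this is deep, but it is the part where care is needed; the essential mathematical content is just the two-step trick (send $b'$ to $\partial T$, then $\partial T$ back to $b$) enabled by condition (iii).
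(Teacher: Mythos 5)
Your argument is correct and is precisely the ``exercise in the definitions'' that the paper leaves to the reader: verify conditions (i)--(iii) for the $e$-side component $T_b$ of $M \ssm b$ using the \emph{same} extension $S$, obtain (iii) by the two-step composition $g^{-1}\circ h$ supplied twice by condition (iii) for $T$, and transport maximality along $\psi$. The only polish I would add is the degenerate case $T \cong \mathbb{D}$ (which has no defining end and reduces to the Sch\"onflies theorem), and the observation that $\psi(T_b)=T$ forces $\psi^{-1}(T)=T_b$ exactly, so your computation $\psi^{-1}\phi\psi(M\ssm T_b)\subset \psi^{-1}(T)=T_b$ already closes without the extra care you hedge about.
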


The following lemma is another exercise in the definitions given thus far, together with the fact that the set of maximal ends of a 2-manifold is closed \cite[Lemma~4.6]{MannLarge}. 

\begin{Lem}
\label{lem:maximal_prop}
\begin{enumerate}[(1)]
\item Every defining end of a maximal telescope in a telescoping 2-manifold is a maximal end.
\item Any two maximal ends of a telescoping 2-manifold are equivalent. 
\item The set of maximal ends of a telescoping 2-manifold is either a singleton, doubleton, or a perfect set. 
\qed
\end{enumerate}
\end{Lem}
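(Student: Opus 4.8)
The plan is to prove the three assertions in order, leaning on the definitions and on Mann--Rafi's result \cite[Lemma~4.6]{MannLarge} that the set of maximal ends is closed.

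\textbf{Part (1).} Let $T$ be a maximal telescope with defining end $e$ and extension $S$. I want to show $e$ is a maximal end, i.e., if $\varepsilon$ is an end comparable to $e$ then $e \preceq \varepsilon$. The key point is that by Lemma~\ref{lem:deep_lens}, $e$ has a neighborhood basis of maximal telescopes, and by condition (iii) in Definition~\ref{def:telescope}, any two such nested telescopes seeing $e$ can be carried onto one another by an ambient homeomorphism supported in the larger extension. This "self-similarity at $e$" should force maximality: given $\varepsilon \preceq e$ with $\varepsilon$ comparable to $e$, I would take any neighborhood $U$ of $e$ in $\Ends(M)$, shrink to a maximal telescope neighborhood $\widehat{T'} \subset U$ with $T' \subset T$, and use the homeomorphism of condition (iii) (or the maximality clause, which maps $M \smallsetminus T$ into $T$) to find inside $\widehat{T'}$ a clopen set homeomorphic to $\widehat{T}$ itself. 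Since $\varepsilon \preceq e$, a neighborhood of $\varepsilon$ embeds near $e$, hence near every copy of $e$, hence inside $\widehat{T'} \subset U$; but to get $e \preceq \varepsilon$ I instead observe that the maximality clause lets $\widehat{T}$ (a full neighborhood of $e$) embed into every neighborhood of $e$, so in particular $e$'s neighborhoods embed into $\varepsilon$'s whenever $\varepsilon$'s embed somewhere accumulating on the orbit — the honest argument is: $\preceq$ restricted to the $\Homeo(M)$-orbit structure together with condition (iii) shows no end strictly dominates $e$. I expect this to be a short argument once the neighborhood-basis-of-telescopes fact is in hand.

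\textbf{Part (2).} Let $\mu_1, \mu_2$ be maximal ends. If $M$ contains a maximal telescope $T$, then by the maximality clause there is a homeomorphism $M \to M$ mapping $M \smallsetminus T$ into $T$; iterating, every end of $M$ — in particular $\mu_1$ and $\mu_2$ — is $\preceq$ a defining end of $T$. If $\mu_i$ is itself a defining end of some maximal telescope, then by Part (1) and this observation all defining ends of maximal telescopes are equivalent to each other and dominate everything, so $\mu_1 \preceq e \preceq$-equivalent-to $e' \succeq \mu_2$, and since $\mu_1, \mu_2$ are maximal, $\mu_1 \preceq e$ forces $e \preceq \mu_1$, similarly $e' \preceq \mu_2$; combined with $e \sim e'$ we get $\mu_1 \sim \mu_2$. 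The case distinction to handle carefully is whether an arbitrary maximal end need be a defining end of some maximal telescope — I would argue it is, using that $\mu_i$ has arbitrarily small telescope-free... no: rather, since $\mu_i$ is maximal and every end is dominated by the defining end $e$ of $T$, we have $\mu_i \preceq e$, hence by maximality $e \preceq \mu_i$, so $\mu_i$ is equivalent to $e$; then any maximal-telescope neighborhood of $e$ is also one of $\mu_i$ after an ambient homeomorphism realizing the equivalence on ends (using Richards's realization result from Section~\ref{sec:prelim}). This pins down that every maximal end "is" a defining end up to homeomorphism, closing the argument.

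\textbf{Part (3).} Given (2), the maximal ends form a single $\preceq$-equivalence class $\mathcal{M}$, which is closed in $\Ends(M)$ by \cite[Lemma~4.6]{MannLarge}. A closed subset of a compact metrizable zero-dimensional space decomposes as a perfect set plus a countable scattered part; I want to rule out isolated points unless $|\mathcal{M}| \le 2$. Suppose $\mu \in \mathcal{M}$ is isolated in $\mathcal{M}$ but $|\mathcal{M}| \ge 3$. Using the telescope at $\mu$ and condition (iii)/the maximality clause, $\mu$ has self-similar neighborhoods; if some other maximal end $\mu'$ lies in a small telescope neighborhood $T'$ of $\mu$, then since $T'$ is abstractly homeomorphic to $T$ (a neighborhood of $\mu$) and $\mu' \sim \mu$, the copy of $T$ inside $T'$ contains yet another maximal end, and iterating produces a sequence of maximal ends converging to $\mu$, contradicting that $\mu$ is isolated in $\mathcal{M}$ — unless the telescope never "sees" a second maximal end, which by condition (ii) (the nontrivial partition of the orbit) and the doubleton exception corresponds precisely to the cases $|\mathcal{M}| = 1$ or $2$. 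So either $\mathcal{M}$ is perfect, or it has an isolated point and $|\mathcal{M}| \le 2$, giving the trichotomy.

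\textbf{Main obstacle.} The delicate point throughout is Part (1): extracting genuine maximality of the defining end from the combination of the self-similarity condition (iii) and the maximality clause, without circularity. I would want to isolate a clean intermediate lemma — something like "if $T$ is a maximal telescope with defining end $e$, then for every neighborhood $U$ of $e$ in $\Ends(M)$ there is a homeomorphism of $M$ carrying all of $\Ends(M)$ into $\widehat{U}$ off a fixed compact set" — and derive maximality of $e$ formally from that, since it immediately gives $\varepsilon \preceq e$ for all $\varepsilon$, hence $e$ can only be dominated by ends equivalent to it.
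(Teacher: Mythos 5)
The paper does not actually write out a proof here --- it flags the lemma as an exercise in the definitions plus the closedness of the set of maximal ends --- so I am judging your sketch against what that exercise requires. Your overall strategy is the intended one, and Parts (1) and (2) are essentially correct once the muddled middle of Part (1) is discarded in favour of the clean statement you reach at the end. One correction there: a self-homeomorphism of \( M \) induces a \emph{bijection} of \( \Ends(M) \), so it cannot carry ``all of \( \Ends(M) \) into \( \widehat U \)''. What the maximality clause gives, applied to a maximal telescope \( T' \) with \( \widehat{T'} \subset U \) (such \( T' \) form a neighborhood basis of \( e \) by Lemma~\ref{lem:deep_lens}), is \( \hat f\bigl(\Ends(M) \ssm \widehat{T'}\bigr) \subset \widehat{T'} \). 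That is still enough: every end \( \varepsilon \neq e \) lies outside some such \( \widehat{T'} \), so \( \varepsilon \preceq e \) for every end \( \varepsilon \), whence \( e \) is maximal, and Part (2) follows formally as you say.

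The genuine gap is in Part (3), at the step where you rule out an isolated point of \( \mathcal M \) when \( |\mathcal M| \ge 3 \) by appealing to condition (ii). Condition (ii) only says that an extension \( S \) induces a nontrivial partition of the \( \Homeo(M) \)-orbit of the defining end; this is perfectly consistent with \( \widehat T \cap \mathcal M = \{e\} \) and \( |\mathcal M| \) being any finite number \( \ge 3 \), so it does not deliver the dichotomy you want. The mechanism that does work is injectivity of the induced end map: if \( e \) were isolated in \( \mathcal M \) with \( |\mathcal M| \ge 3 \), shrink \( T \) so that \( \widehat T \cap \mathcal M = \{e\} \); the maximality clause gives \( f \) with \( f(M \ssm T) \subset T \), and since \( \hat f \) is injective and preserves maximality it must send the at least two maximal ends outside \( \widehat T \) into \( \widehat T \cap \mathcal M = \{e\} \), a contradiction. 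One then transfers ``\( e \) is an accumulation point of \( \mathcal M \)'' to every \( \mu \in \mathcal M \) using only the local homeomorphisms witnessing \( \mu \sim e \) (an end equivalent to a maximal end is maximal), rather than the ambient homeomorphism you invoke via Richards: that ambient realization is Lemma~\ref{lem:abs_same_tel}(i), which the paper establishes only \emph{after} this lemma using stability, so relying on it here risks circularity. Combined with closedness of \( \mathcal M \), this yields perfectness and completes the trichotomy.
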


Lemma~\ref{lem:maximal_prop}(3) partitions the class of telescoping 2-manifolds into three classes, depending on whether the set of maximal ends is a singleton, doubleton, or a perfect set.
We note that the telescoping 2-manifolds with a unique maximal end are exactly the 2-manifolds that are telescoping by the definition presented here but that are not telescoping in the definition given by Mann--Rafi. 
We also note, as mentioned in the introduction, that the telescoping 2-manifolds with a unique maximal end are a proper subclass of the class of  uniquely self-similar 2-manifolds, and the class of telescoping 2-manifolds with a perfect set of maximal ends is equal to the class the perfectly self-similar 2-manifolds.

A subset  \( K \) of a manifold \( M \) is \emph{displaceable} if there exists a homeomorphism \( f \co M \to M \) such that \( f(K) \cap K = \varnothing \). 
A 2-manifold is \emph{weakly self-similar} if every proper compact subset of the manifold is displaceable.  

\begin{Lem}
\label{lem:weakly_ss}
Every telescoping 2-manifold is weakly self-similar.
\end{Lem}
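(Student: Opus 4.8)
The plan is to reduce the lemma to the following statement: for every proper compact set \( K \subset M \) there is a \emph{maximal} telescope \( T_1 \subseteq M \) with \( T_1 \cap K = \varnothing \). Granting this, the displacement is immediate: by the definition of a maximal telescope there is \( g \in \Homeo(M) \) with \( g(M \ssm T_1) \subseteq T_1 \); since \( K \subseteq M \ssm T_1 \) we get \( g(K) \subseteq T_1 \), and hence \( g(K) \cap K \subseteq T_1 \cap K = \varnothing \). Thus \( K \) is displaceable, as required.

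So the entire content is in producing \( T_1 \), and here I would start from an arbitrary maximal telescope \( T \subseteq M \) and split into two cases. If \( T \) is homeomorphic to \( \bd \): applying \( g(M \ssm T) \subseteq T \) as above exhibits \( M = g(T) \cup T \) as a union of two closed disks, so \( M \) is compact, and since the only compact telescoping 2-manifold is \( \bS^2 \), we have \( M = \bS^2 \); now \( K \) is a proper closed subset of \( \bS^2 \), so there is a closed disk \( T_1 \subseteq \bS^2 \ssm K \), and every closed disk in \( \bS^2 \) is a maximal telescope by Section~\ref{sec:examples}. If \( T \) is not homeomorphic to \( \bd \): let \( e \) be a defining end of \( T \). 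Since \( T \) sees \( e \) and \( K \) is compact, \( e \) has a neighborhood in \( M \) that is disjoint from \( K \); shrinking it into \( T^{\mathrm{o}} \) and passing to a clopen piece with connected compact boundary produces a separating simple closed curve \( b \subset T^{\mathrm{o}} \) that separates \( \partial T \) from \( e \) and whose \( e \)-side component misses \( K \). By Lemma~\ref{lem:deep_lens}, \( b \) bounds a maximal telescope \( T_1 \subseteq T \), namely the closure of that \( e \)-side component, and by construction \( T_1 \cap K = \varnothing \).

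I expect the only real obstacle to be bookkeeping the dichotomy cleanly rather than any deep point: the hypothesis that \( K \) be proper is used precisely to make \( T_1 \) exist---it is essential in the case \( M = \bS^2 \) and automatic for compact \( K \) when \( M \) is noncompact---and the two supporting facts, that an end can be separated from a compact set by a simple closed curve with connected compact boundary and that a union of two closed disks is compact, are routine. So I anticipate the write-up being the short reduction above together with the straightforward verification of the two cases.
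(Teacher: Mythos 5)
Your proof is correct and follows essentially the same route as the paper: the paper's proof is exactly the two-step reduction you describe---invoke Lemma~\ref{lem:deep_lens} to produce a maximal telescope disjoint from the proper compact set \( K \), then apply the defining displacement property of a maximal telescope. You merely spell out the details the paper leaves implicit (the separate \( \bS^2 \) case and the construction of the separating curve \( b \) near the defining end), and those details check out.
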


\begin{proof}
Let \( M \) be a telescoping 2-manifold, and let \( K \) be a compact subset of \( M \).
By Lemma~\ref{lem:deep_lens}, there exists a maximal telescope \( T \) of \( M \) such that \( T \cap K = \varnothing \).
Therefore, there exists a homeomorphism \( f \co M \to M \) such that \( f(M \ssm T) \subset T \); in particular, \( f(K) \cap K = \varnothing \), as \( K \subset M \ssm T \). 
\end{proof}

Lemma~\ref{lem:weakly_ss}, together with \cite[Remark~4.15]{MannLarge}, tells us that every maximal end of a telescoping 2-manifold is \emph{stable}; we will forgo the definition of a stable end, but we refer the reader to \cite[Section~4]{MannLarge} and \cite[Section~4]{VlamisHomeomorphism} for detailed discussions regarding stable ends.
The main point we want to take away from stability is the following lemma, which can be deduced from \cite[Propositions~4.2~\&~4.8]{VlamisHomeomorphism} together with the fact that, whenever there are at least two maximal ends, every maximal telescope partitions the set of maximal ends into two non-empty subsets. 

\begin{Lem}
\label{lem:abs_same_tel}
Let \( M \) be a telescoping 2-manifold.
\begin{enumerate}[(i)]
\item Given two maximal ends of \( M \), there exists an ambient homeomorphism mapping one to the other. 
\item Every maximal end seen by a maximal telescope \( T \) is a defining end for \( T \).
\item Given two defining ends of a maximal telescope \( T \), there exists a homeomorphism supported in \( T \) mapping one to the other. 
\item Any two maximal telescopes of a telescoping 2-manifold are abstractly homeomorphic. 
\item Every extension of a maximal telescope is abstractly homeomorphic to the maximal telescope.
\item If \( M \) has at least two maximal ends and \( T \) is a maximal telescope of \( M \), then \( M \ssm T^\mathrm o \) is a maximal telescope of \( M \). 
\qed
\end{enumerate}
\end{Lem}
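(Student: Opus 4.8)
The plan is to deduce all six items from the \emph{stability} of the maximal ends of \( M \) --- noted just before the statement --- fed into the stable-end machinery of \cite[Section~4]{VlamisHomeomorphism}, in particular \cite[Propositions~4.2 and~4.8]{VlamisHomeomorphism}, and combined with Lemma~\ref{lem:deep_lens}, Lemma~\ref{lem:maximal_prop}, the classification of surfaces, and the observation that when \( M \) has at least two maximal ends a maximal telescope \( T \) splits the set \( \mathcal M \) of maximal ends into the two nonempty clopen pieces \( \widehat T \cap \mathcal M \) and \( \mathcal M \ssm \widehat T \). Items (i)--(iii) concern moving maximal ends around and items (iv)--(vi) concern recognizing maximal telescopes up to abstract homeomorphism; I would establish them in roughly the order (i), (iii), (ii), (iv), (v), (vi), with the later items drawing on the earlier ones.

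For (i): any two maximal ends are equivalent by Lemma~\ref{lem:maximal_prop}(2) and, by Lemma~\ref{lem:weakly_ss} together with \cite[Remark~4.15]{MannLarge}, stable, so \cite[Proposition~4.2]{VlamisHomeomorphism} produces an ambient homeomorphism of \( M \) carrying one onto the other. For (iii): the two defining ends \( e_1,e_2 \) of a maximal telescope \( T \) are maximal by Lemma~\ref{lem:maximal_prop}(1) and are seen by \( T \); since stability and equivalence of an end are properties of its neighborhood basis and \( T \) is a neighborhood of each, \( e_1 \) and \( e_2 \) are stable and equivalent as ends of \( T \) as well, so running the argument of (i) inside \( T \) with supports kept off the boundary circle \( \partial T \) yields a homeomorphism of \( M \) supported in \( T \) interchanging them; the same reasoning interchanges any two maximal ends seen by \( T \) by a homeomorphism supported in \( T \), a form used next. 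For (ii): let \( e\in\widehat T \) be a maximal end and \( e' \) a defining end of \( T \) with extension \( S' \), and I claim \( S' \) is again an extension witnessing \( e \). Condition~(i) of Definition~\ref{def:telescope} holds because \( e\in\widehat T \) forces \( \partial T \) to separate \( \partial S' \) from \( e \) just as from \( e' \); condition~(ii) holds because \( e \) and \( e' \) lie in a common \( \Homeo(M) \)-orbit by (i); and for condition~(iii), given a separating simple closed curve \( b \) in \( T \) separating \( \partial T \) from \( e \), pick a homeomorphism \( g \) supported in \( T \) with \( g(e)=e' \), so that \( g(b) \) is a separating simple closed curve in \( T \) separating \( \partial T \) from \( e' \), map \( g(b) \) onto \( \partial T \) by a homeomorphism \( \psi \) restricting to the identity on \( M\ssm S' \) via the defining property of \( (T,e',S') \), and take \( \psi\circ g \).

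The heart of the matter is a \emph{collar-absorption} principle: if \( V \) is a subsurface of \( M \) with connected compact boundary containing a maximal telescope \( W \) that is a neighborhood of its defining end \( \mu \), with \( \widehat W\subseteq\widehat V \) and \( \partial W \) separating \( \partial V \) from \( \mu \), then the compact region of \( V \) caught between \( \partial V \) and any separating simple closed curve of \( V \) that separates \( \partial V \) from \( \mu \) can be absorbed by a homeomorphism of \( M \) fixing \( W \) and carrying that curve onto \( \partial V \); consequently \( V \) is abstractly homeomorphic to \( W \). This is where the real work lies: the proof uses the maximality of \( W \) --- which lets \( W \) swallow a whole copy of \( M\ssm W \) --- together with telescope axiom~(iii) for \( W \) and the nesting of maximal telescopes provided by Lemma~\ref{lem:deep_lens}, along the lines of the relative stable-end arguments of \cite[Section~4]{VlamisHomeomorphism}; compare \cite[Proposition~4.8]{VlamisHomeomorphism}. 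I expect this step to be the main obstacle, the remaining items being largely bookkeeping.

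Granting collar-absorption: if \( M\cong\bS^2 \) every maximal telescope is a closed disk and (iv)--(vi) are immediate (with (vi) vacuous), so assume otherwise, so that no maximal telescope is a disk. For (iv), using (i) reduce to two maximal-telescope neighborhoods \( T_1,T_2 \) of a common maximal end \( \mu \), pick by Lemma~\ref{lem:deep_lens} a maximal telescope \( W\subseteq T_1^{\mathrm{o}}\cap T_2^{\mathrm{o}} \) that is a neighborhood of \( \mu \) with \( \partial W \) separating both \( \partial T_1 \) and \( \partial T_2 \) from \( \mu \), and apply collar-absorption to \( W\subseteq T_1 \) and \( W\subseteq T_2 \) to get \( T_1\cong W\cong T_2 \). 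For (v), given an extension \( S\supseteq T \) of a maximal telescope \( T \), pick by Lemma~\ref{lem:deep_lens} a maximal telescope \( W\subseteq T^{\mathrm{o}} \) that is a neighborhood of a defining end of \( T \) with \( \partial W \) separating \( \partial T \) from it, and apply collar-absorption to \( W\subseteq T \) and to \( W\subseteq S \) to get \( T\cong W\cong S \). For (vi), with \( |\mathcal M|\ge 2 \): the subsurface \( M\ssm T^{\mathrm{o}} \) has connected compact boundary \( \partial T \) and sees the nonempty clopen set \( \mathcal M\ssm\widehat T \), and with \( W \) as above one checks it is a telescope with extension \( M\ssm W^{\mathrm{o}} \) --- condition~(i) of Definition~\ref{def:telescope} is clear, condition~(ii) follows from the orbit-partition observation together with (i), and condition~(iii) is collar-absorption applied on the complementary side; its maximality follows because, by (i) and Lemma~\ref{lem:deep_lens}, some maximal end \( \nu\notin\widehat T \) has a maximal-telescope neighborhood \( W'\subseteq M\ssm T \), and the maximality of \( W' \) furnishes a homeomorphism of \( M \) carrying \( T \), hence \( M\ssm(M\ssm T^{\mathrm{o}}) \), into \( M\ssm T^{\mathrm{o}} \).
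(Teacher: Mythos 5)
Your proposal is correct and follows essentially the same route as the paper, which offers no written proof at all: the lemma is stated with a \qed and justified only by the preceding remark that it ``can be deduced from [Propositions~4.2 \& 4.8] of \cite{VlamisHomeomorphism} together with the fact that, whenever there are at least two maximal ends, every maximal telescope partitions the set of maximal ends into two non-empty subsets,'' with stability of maximal ends supplied by Lemma~\ref{lem:weakly_ss} and \cite[Remark~4.15]{MannLarge}. Your write-up is a faithful expansion of exactly that citation --- the ``collar-absorption principle'' you isolate as the main obstacle is precisely the content of the cited Proposition~4.8 (stable neighborhoods of equivalent stable ends are homeomorphic, relativized to a subsurface), so deferring its proof to that reference puts you on the same footing as the paper.
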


Next, we upgrade the abstract homeomorphism in Lemma~\ref{lem:abs_same_tel}(iv) to an ambient homeomorphism. 

\begin{Prop}
\label{prop:amb_same_tel}
Any two maximal telescopes of a telescoping 2-manifold are ambiently homeomorphic. 
\end{Prop}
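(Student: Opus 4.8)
The plan is to reduce to the case where the two maximal telescopes $T_1$ and $T_2$ are nested and share a defining end, and then build the ambient homeomorphism as an infinite product of supported homeomorphisms that "shifts" one telescope onto the other. First I would use Lemma~\ref{lem:deep_lens} together with Lemma~\ref{lem:weakly_ss} (weak self-similarity) to displace one of the telescopes: given arbitrary maximal telescopes $T_1, T_2$, I can find a maximal telescope $T_0$ disjoint from both $T_1$ and $T_2$ and contained in a neighborhood of some common defining end arrangement; more to the point, I want to first handle the case where $T_2 \subset T_1^{\mathrm o}$ and $\partial T_2$ separates $\partial T_1$ from a common defining end $e$ of both (such a $T_2$ exists inside $T_1$ by Lemma~\ref{lem:deep_lens}, and by Lemma~\ref{lem:abs_same_tel}(ii) any maximal end seen by $T_1$ is a defining end). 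The general case follows from this nested case by composing two such ambient homeomorphisms through a common sub-telescope.

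For the nested case, the key point is that $T_1 \ssm T_2^{\mathrm o}$ is an extension-like region: by Lemma~\ref{lem:abs_same_tel}(v) every extension of a maximal telescope is abstractly homeomorphic to the telescope, and $T_1$ is abstractly homeomorphic to $T_2$ by Lemma~\ref{lem:abs_same_tel}(iv). Using Lemma~\ref{lem:deep_lens} I would produce a nested sequence of maximal telescopes $T_1 = T^{(0)} \supset T^{(1)} \supset T^{(2)} \supset \cdots$ all sharing the defining end $e$, with $T^{(1)} = T_2$, and with $\bigcap_n T^{(n)}$ capturing only the end $e$ (so that the complementary "annular" pieces $A_n = T^{(n)} \ssm (T^{(n+1)})^{\mathrm o}$ have locally finite supports as $n \to \infty$). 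By Lemma~\ref{lem:abs_same_tel}(v), consecutive pieces are abstractly homeomorphic in a compatible way, so I can choose, for each $n$, a homeomorphism $h_n \co M \to M$ supported in $A_{n-1} \cup A_n$ that carries the complementary piece $A_{n-1}$ onto $A_n$ (matching up the boundary curves $\partial T^{(n)}$), using condition (iii) of Definition~\ref{def:telescope} and Lemma~\ref{lem:abs_same_tel}(iii) to realize these matchings by ambient homeomorphisms. Composing these in the right order and taking the infinite product $F = \prod_{n} h_n$ (legitimate since the supports $A_{n-1} \cup A_n$ form a locally finite family accumulating only at $e$) yields an ambient homeomorphism with $F(T_1) = T_2$.

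The main obstacle I anticipate is the bookkeeping needed to guarantee that the chosen homeomorphisms $h_n$ can be made to agree on overlaps so that the infinite product is well-defined and actually sends $\partial T_1$ to $\partial T_2$ rather than drifting. Concretely, I must ensure that each $h_n$ restricted to the curve $\partial T^{(n)}$ is the identity (or at least matches what $h_{n+1}$ expects), so that the composition telescopes cleanly; this is where condition (iii) of Definition~\ref{def:telescope} — which lets us straighten an arbitrary separating curve in a telescope to its boundary by a homeomorphism supported in an extension — does the real work, and I would apply it inductively to align the images of the boundary curves. A secondary technical point is verifying that the nested sequence $T^{(n)}$ can be chosen to shrink down to exactly the end $e$; this follows because $\partial T^{(n)}$ can be taken to exit every compact set (again via Lemma~\ref{lem:deep_lens} applied repeatedly), making $\{A_n\}$ locally finite. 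Once these alignment issues are handled, the infinite-product construction from Section~\ref{sec:prelim} produces the desired homeomorphism and the general (non-nested) case follows by a two-step composition through a common deeply-nested sub-telescope obtained from Lemma~\ref{lem:deep_lens} and Lemma~\ref{lem:abs_same_tel}(i).
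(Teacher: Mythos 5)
Your reduction to the nested case is reasonable, but the core construction in that case cannot work as stated. A homeomorphism $h_n$ supported in $A_{n-1}\cup A_n$ is the identity on the complement of $A_{n-1}\cup A_n$ and hence, by continuity, fixes $\partial(A_{n-1}\cup A_n) = \partial T^{(n-1)}\cup\partial T^{(n+1)}$ pointwise. Since $\partial T^{(n-1)}\subset A_{n-1}$ but $\partial T^{(n-1)}\cap A_n=\varnothing$, no such $h_n$ can carry $A_{n-1}$ onto $A_n$. Worse, the problem is global: every factor of your infinite product is supported in $\bigcup_n A_n\subset T_1$, so $F$ is supported in $T_1$ and fixes $\partial T_1$ pointwise; therefore $F(T_1)=T_1$, not $T_2$. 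This is exactly the obstruction that forces the definition of a telescope to involve an \emph{extension} $S\supsetneq T$: to move $\partial T_1$ at all, the support of your homeomorphism must leave $T_1$. The combinatorial "product of adjacent swaps equals a shift" picture does not transfer to surfaces, because the blocks here are closed subsurfaces sharing boundary curves rather than disjoint open sets.

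The paper's proof avoids all of this and is much shorter. When $M$ has at least two maximal ends, Lemma~\ref{lem:abs_same_tel} gives that $T_1\cong T_2$ and $M\ssm T_1^{\mathrm o}\cong M\ssm T_2^{\mathrm o}$ abstractly (both complements are again maximal telescopes), and gluing these along the boundary circles produces the ambient homeomorphism directly. When $M$ has a unique maximal end $\mu$, both telescopes have $\mu$ as defining end; one chooses a single separating curve $b$ that separates each of $\partial T_1$ and $\partial T_2$ from $\mu$, applies condition~(iii) of Definition~\ref{def:telescope} to get $f_i$ (supported in an extension of $T_i$, hence genuinely moving $\partial T_i$) with $f_i(b)=\partial T_i$, and takes $f_2\circ f_1^{-1}$. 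If you want to salvage your shift idea, you would need each stage to be supported in an extension of the relevant telescope rather than inside it, at which point a single application of condition~(iii) already does the job and the infinite product is unnecessary.
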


\begin{proof}
Let \( T_1 \) and \( T_2 \) be maximal telescopes in a telescoping 2-manifold \( M \). 
Let us first assume that \( M \) has at least two maximal ends.
Then, by Lemma~\ref{lem:abs_same_tel}, \( T_1 \) and \( T_2 \) are homeomorphic as are there complements, and therefore, there is an ambient homeomorphism mapping \( T_1 \) onto \( T_2 \). 

We now assume that \( M \) has a unique maximal end. 
Therefore, by Lemma~\ref{lem:maximal_prop}(1), \( T_1 \) and \( T_2 \) have the maximal end as their defining end.
This allows us to choose a separating simple closed curve \( b \) that separates each of \( \partial T_1 \) and \( \partial T_2 \) from the maximal end.
On account of \( T_i \) being a maximal telescope, there exists a homeomorphism \( f_i \co M \to M \) such that \( f_i(b) = T_i \).
To finish, observe that \( (f_2\circ f_1^{-1})(T_1) = T_2 \), as desired.
\end{proof}

The next lemma will allow to introduce the type of surface that will play the analogous role of \( \mathbb S^1 \times [0,1] \) in \( \mathbb S^1 \times \br \). 

\begin{Lem}
\label{lem:tube_exists}
Let \( M \) be a telescoping 2-manifold.
If \( T \) is a maximal telescope in \( M \), then there exists an extension \( S \) of \( T \) such that \( S \) is a maximal telescope. 
\end{Lem}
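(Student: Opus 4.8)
The plan is to show that the desired surface can be taken to be \emph{any} extension of $T$. If $T$ is homeomorphic to $\mathbb{D}$, then $M$ is covered by $T$ and a homeomorphic copy of $T$ containing $M\ssm T$, so $M\cong\mathbb{S}^2$; in that case one takes for $S$ a closed disk with $T\subset S^{\mathrm o}$ and reads ``extension'' through the Freudenthal-compactification reformulation mentioned after Definition~\ref{def:telescope}. So assume $T$ has a defining end $e$, and let $S$ be an extension of $T$, which exists since $T$ is a telescope. I would then prove that $S$ is \emph{ambiently} homeomorphic to $T$. Granting this, there is $\varphi\in\Homeo(M)$ with $\varphi(T)=S$, and since each of the three conditions of Definition~\ref{def:telescope} and the maximality clause is $\Homeo(M)$-equivariant, $S=\varphi(T)$ is a maximal telescope; as it is by construction an extension of $T$, this is exactly the conclusion. (As a sanity check, the maximality clause for $S$ also follows directly from $T\subseteq S$: a homeomorphism $h$ with $h(M\ssm T)\subset T$ satisfies $h(M\ssm S)\subseteq h(M\ssm T)\subset T\subseteq S$.)

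\textbf{Reducing to a statement about ends.} By Lemma~\ref{lem:abs_same_tel}(v), $S$ is abstractly homeomorphic to $T$, so $S$ and $T$ have equal genus and the same orientability type; moreover, since a telescoping $2$-manifold is planar or of infinite genus and is orientable or infinitely non-orientable, the complementary subsurfaces $M\ssm S^{\mathrm o}$ and $M\ssm T^{\mathrm o}$ also have equal genus and the same orientability type. Hence, by the classification of surfaces and the change-of-coordinates principle recalled in Section~\ref{sec:prelim}, an ambient homeomorphism carrying $S$ onto $T$ exists provided there is a homeomorphism $\Ends(M)\to\Ends(M)$ preserving $\Ends_{np}(M)$ and $\Ends_{no}(M)$ and sending $\widehat S$ to $\widehat T$. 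Thus the whole lemma is reduced to producing such a self-homeomorphism of the end space.

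\textbf{The hard part.} The remaining, essential step is where the telescoping hypothesis is used: by Lemma~\ref{lem:weakly_ss} together with \cite[Remark~4.15]{MannLarge}, every maximal end of $M$ is stable, so $\widehat T$ (and therefore $\widehat S$, which by Lemma~\ref{lem:abs_same_tel}(v) is abstractly of the same type) is a ``standard'' clopen neighborhood of the maximal ends it contains, and so is each complement. The content of the step is to upgrade ``$\widehat S$ and $\widehat T$ are abstractly homeomorphic, with abstractly homeomorphic complements'' to ``$\widehat S$ and $\widehat T$ are interchanged by a self-homeomorphism of $\Ends(M)$ respecting $\Ends_{np}(M)$ and $\Ends_{no}(M)$''; this is exactly the kind of statement that stability of the maximal ends is designed to give, matching $\widehat T\subseteq\widehat S$ by sliding one standard neighborhood of the maximal ends onto the other. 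I expect this end-matching to be the main obstacle; once it is in hand, the argument closes as in the first paragraph. (When $M$ has at least two maximal ends one can instead argue more concretely: by Lemma~\ref{lem:abs_same_tel}(vi), $M\ssm T^{\mathrm o}$ is a maximal telescope, and taking $S=M\ssm T_b^{\mathrm o}$ for a deep maximal telescope $T_b\subset M\ssm T^{\mathrm o}$ lying beyond $S$ and past a maximal end not seen by $S$, Lemma~\ref{lem:abs_same_tel}(vi) applied to $T_b$ shows $S$ is a maximal telescope; the unique-maximal-end case, however, genuinely needs the stability argument above.)
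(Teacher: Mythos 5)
Your plan aims at a stronger statement than the lemma needs (that \emph{every} extension of \( T \) is ambiently homeomorphic to \( T \), hence a maximal telescope), and the step you yourself flag as ``the hard part'' is a genuine gap, not a routine application of stability. What stability and Lemma~\ref{lem:abs_same_tel}(v) give you is that \( \widehat S \) and \( \widehat T \) are \emph{abstractly} homeomorphic as decorated spaces; what you need is a self-homeomorphism of \( \Ends(M) \) preserving \( \Ends_{np}(M) \) and \( \Ends_{no}(M) \) carrying \( \widehat S \) onto \( \widehat T \), which in particular forces \( \Ends(M) \ssm \widehat S \cong \Ends(M) \ssm \widehat T = (\Ends(M)\ssm\widehat S) \sqcup (\widehat S \ssm \widehat T) \) as decorated spaces. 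Nothing you cite controls the ``annular'' piece \( \widehat S \ssm \widehat T \), and this is precisely where the telescoping hypothesis must enter; asserting that stability ``is designed to give'' this is not a proof. (Your genus/orientability bookkeeping for the complements \( M \ssm S^{\mathrm o} \) versus \( M \ssm T^{\mathrm o} \) is similarly unjustified: infinite genus of \( M \) does not by itself prevent one complement from having finite genus and the other infinite.) Your fallback for the two-maximal-ends case has a separate gap: setting \( S = M \ssm T_b^{\mathrm o} \) for a deep maximal telescope \( T_b \subset M \ssm T^{\mathrm o} \) does produce a maximal telescope containing \( T \), but you never verify condition (iii) of Definition~\ref{def:telescope}, i.e.\ that every separating curve \( b \subset T \) cutting off the defining end can be pushed onto \( \partial T \) by a homeomorphism restricting to the identity on \( M \ssm S = T_b^{\mathrm o} \); the homeomorphisms supplied by \( T \)'s original extension are only known to be the identity off that extension, not off \( S \). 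Without (iii), \( S \) is not an extension of \( T \) and the lemma is not proved.

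The paper's proof avoids all of this by exploiting condition (iii) directly rather than analyzing end spaces. Take any extension \( S' \) of \( T \); Lemma~\ref{lem:abs_same_tel}(v) gives an abstract homeomorphism \( f \co S' \to T \), and \( T' := f(T) \) is a shrunk copy of \( T \) inside \( T \) whose boundary \( \partial T' \) is a separating curve in \( T \) cutting off a defining end. Condition (iii) for \( T \) then produces an \emph{ambient} homeomorphism \( g \) with \( g(T') = T \), so \( T' \) is a maximal telescope with extension \( T \), and \( S := g(T) \) is a maximal telescope that is an extension of \( g(T') = T \). Note this only produces \emph{one} good extension, which is all the lemma asks for. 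If you want to salvage your approach, the missing end-matching should be manufactured the same way: the ambient homeomorphism comes from condition (iii) applied to a suitable curve, not from a classification-of-surfaces argument on \( \Ends(M) \).
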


\begin{proof}
Let \( S' \) be an extension of \( T \).
By Lemma~\ref{lem:abs_same_tel}, there exists a homeomorphism \( f \co S' \to T \). 
Viewing \( T' := f(T) \) as a subset of \( M \), we have that \( T' \) is a maximal telescope and \( T \) is an extension of \( T' \). 
As \( \partial T' \) separates \( \partial T \) from a defining end of \( T \), there exists a homeomorphism \( g \co M \to M \) such that \( g(T') = T \).  
Hence, \( S = g(T) \) is a maximal telescope and it is an extension of \( T \). 
\end{proof}

For the following definition, introducing the notion of a tube, we encourage the reader to revisit Definition~\ref{def:telescope} and recall the technical notion of an extension of a telescope.  

\begin{Def}[Tube]
A subsurface \( P \) of a telescoping 2-manifold is a \emph{tube}\footnote{The term tube is also used in \cite{LanierHomeomorphism}; the definition here is stricter, in the sense that a tube as defined here is also a tube in the sense of \cite{LanierHomeomorphism}, but not vice versa.  It would be more appropriate to use something like predecessor tube here, but we prefer to avoid this unnecessarily cumbersome language.} if there exist maximal telescopes \( T_1 \) and \( T_2 \) such that \( T_2 \) is an extension of \( T_1 \) and \( P = T_2 \ssm T_1^\mathrm o \). 
We say that \( T_1 \) and \( T_2 \) \emph{define} \( P \). 
\end{Def}

Lemma~\ref{lem:tube_exists} guarantees that every telescoping 2-manifold contains a tube. 
Next, we define the subsurface that plays the analogous role of a closed disk in \( \br^2 \). 

\begin{Def}[Capped tube]
A subsurface \( C \) in a telescoping 2-manifold with a unique maximal end is a \emph{capped tube} if there exists a tube \( P \) such that \( C = P \cup U \), where \( U \) is the non-telescope component of \( M \ssm P^\mathrm{o} \). 
\end{Def}

\begin{Prop}
\label{prop:amb_tube}
Any two tubes (resp., capped tubes) in a telescoping 2-manifold are ambiently homeomorphic. 
\end{Prop}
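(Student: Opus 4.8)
The plan is to reduce the statement about two tubes (or two capped tubes) to the already-established fact that any two maximal telescopes are ambiently homeomorphic (Proposition~\ref{prop:amb_same_tel}), by a careful bookkeeping argument on the defining data. First I would unpack the definitions: a tube $P$ comes with a pair of maximal telescopes $T_1 \subset T_2$ with $T_2$ an extension of $T_1$ and $P = T_2 \ssm T_1^{\mathrm o}$, so $P$ is a compact-boundaried subsurface whose two boundary components are $\partial T_1$ and $\partial T_2$. Given a second tube $P'$ defined by $T_1' \subset T_2'$, the goal is a self-homeomorphism of $M$ carrying $P$ to $P'$; the natural candidate is built by first sending $T_2$ to $T_2'$ and then adjusting so that the image of the inner telescope $T_1$ lands exactly on $T_1'$.

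The key steps, in order. (1) Apply Proposition~\ref{prop:amb_same_tel} to get $h \in \Homeo(M)$ with $h(T_2) = T_2'$. Then $h(T_1)$ is a maximal telescope contained in $T_2'$ (it is ambiently, hence abstractly, the same as $T_1'$, and by Lemma~\ref{lem:abs_same_tel}(v) an extension is abstractly homeomorphic to the telescope, so the ``scales'' match up). (2) Now I need a homeomorphism of $M$ that fixes $M \ssm T_2'$ and carries $h(T_1)$ onto $T_1'$. For this I would use condition (iii) in Definition~\ref{def:telescope} together with Lemma~\ref{lem:deep_lens}: both $\partial h(T_1)$ and $\partial T_1'$ are separating simple closed curves inside the maximal telescope $T_2'$ separating $\partial T_2'$ from a defining end $e$ of $T_2'$, and so by the defining property of the telescope $T_2'$ (with $S$ an extension of $T_2'$, taking care to work inside $T_2'$ so the support stays in $M \ssm S$ away from $\partial T_2'$), there are homeomorphisms supported appropriately carrying each of these curves onto $\partial T_2'$ — but more to the point, composing these gives a homeomorphism $g$ supported in (a neighborhood of) $T_2'$ with $g(\partial h(T_1)) = \partial T_1'$. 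A Schönflies-type argument (the complement of the inner telescope inside $T_2'$ is a tube, and tubes of a given topological type inside a fixed telescope are interchangeable — here one can bootstrap from Lemma~\ref{lem:abs_same_tel}(iii) applied to $T_2'$, realizing the curve-moving homeomorphism as supported in $T_2'$) upgrades $g(\partial h(T_1)) = \partial T_1'$ to $g(h(T_1)) = T_1'$, since a maximal telescope in $M$ with unique compact boundary $\partial T_1'$ separating $\partial T_2'$ from $e$ is determined up to ambient homeomorphism fixing $\partial T_2'$ — and both $g(h(T_1))$ and $T_1'$ are the telescope component of $T_2'$ cut along that curve. (3) Then $f := g \circ h$ satisfies $f(T_2) = T_2'$ and $f(T_1) = T_1'$, hence $f(P) = f(T_2 \ssm T_1^{\mathrm o}) = T_2' \ssm (T_1')^{\mathrm o} = P'$, as desired. (4) For capped tubes, the manifold has a unique maximal end; a capped tube $C = P \cup U$ is just $T_2$ with the non-telescope side filled in — equivalently, $C = T_2 \ssm T_1^{\mathrm o}$ together with the cap, which is abstractly a once-holed copy of the non-telescope complement. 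Since the unique-maximal-end case was already handled inside Proposition~\ref{prop:amb_same_tel} by choosing a curve $b$ separating both $\partial T_1$ and $\partial T_1'$ from the maximal end and pushing, the same machinery applies verbatim: choose $f$ with $f(T_2) = T_2'$ and $f(T_1) = T_1'$, and note $f$ automatically carries the non-telescope component $U$ of $M \ssm P^{\mathrm o}$ to the corresponding $U'$ (it is the unique non-telescope complementary component), so $f(C) = C'$.

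The main obstacle I anticipate is step (2): realizing the ``inner adjustment'' as a homeomorphism of $M$ supported away from $\partial T_2'$ (so that it does not disturb the work done in step (1)), while still being able to move the inner boundary curve to the prescribed position. The definition of a telescope only directly gives homeomorphisms that fix $M \ssm S$ for an extension $S$ — it does not literally say ``supported in the telescope'' — so some care is needed to arrange that the support lands inside $T_2'$ rather than merely outside $S$. The clean way around this is to invoke Lemma~\ref{lem:abs_same_tel}(iii), which packages exactly this: it provides homeomorphisms \emph{supported in} a maximal telescope $T$ moving one defining end (equivalently, after Lemma~\ref{lem:deep_lens}, one deep boundary curve) to another. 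Combining that with the Schönflies/classification-of-surfaces observation that a maximal telescope inside $T_2'$ is determined by its boundary curve up to a homeomorphism supported in $T_2'$ closes the gap.

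\begin{proof}
Let $P$ and $P'$ be tubes in a telescoping 2-manifold $M$, defined by maximal telescopes $T_1 \subset T_2$ and $T_1' \subset T_2'$ respectively, so that $P = T_2 \ssm T_1^{\mathrm o}$ and $P' = T_2' \ssm (T_1')^{\mathrm o}$. By Proposition~\ref{prop:amb_same_tel}, choose $h \in \Homeo(M)$ with $h(T_2) = T_2'$. Set $\widetilde T_1 = h(T_1)$; this is a maximal telescope contained in the maximal telescope $T_2'$, and $\partial \widetilde T_1$ separates $\partial T_2'$ from a defining end $e$ of $T_2'$. Likewise $\partial T_1'$ separates $\partial T_2'$ from a defining end of $T_2'$, and by Lemma~\ref{lem:abs_same_tel}(ii) both of these defining ends may be taken to be $e$ after first applying a homeomorphism of $M$ supported in $T_2'$ (Lemma~\ref{lem:abs_same_tel}(iii)).

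Now, $\widetilde T_1$ and $T_1'$ are both maximal telescopes whose boundaries are separating simple closed curves in $T_2'$ separating $\partial T_2'$ from $e$. By Lemma~\ref{lem:deep_lens}, each of these curves bounds a maximal telescope contained in $T_2'$, and by the classification of surfaces together with the Jordan--Sch\"onflies theorem, any two such curves in $T_2'$ (separating $\partial T_2'$ from $e$) are ambiently homeomorphic via a homeomorphism supported in $T_2'$; indeed, cutting $T_2'$ along such a curve produces a maximal telescope together with a tube, and by Lemma~\ref{lem:abs_same_tel}(iii),(v) applied to $T_2'$ the resulting decompositions are interchangeable by a homeomorphism supported in $T_2'$. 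Hence there is $g \in \Homeo(M)$ supported in $T_2'$ with $g(\widetilde T_1) = T_1'$; in particular $g$ restricts to the identity on $M \ssm T_2'$, so $g(T_2') = T_2'$.

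Let $f = g \circ h$. Then $f(T_2) = g(T_2') = T_2'$ and $f(T_1) = g(\widetilde T_1) = T_1'$, so
\[
f(P) = f(T_2) \ssm f(T_1)^{\mathrm o} = T_2' \ssm (T_1')^{\mathrm o} = P',
\]
which proves that any two tubes are ambiently homeomorphic.

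Finally, suppose $M$ has a unique maximal end and let $C = P \cup U$ and $C' = P' \cup U'$ be capped tubes, where $P$, $P'$ are tubes defined by $T_1 \subset T_2$ and $T_1' \subset T_2'$ as above, and $U$ (resp.\ $U'$) is the non-telescope component of $M \ssm P^{\mathrm o}$ (resp.\ $M \ssm (P')^{\mathrm o}$). By Lemma~\ref{lem:maximal_prop}(1), the unique maximal end is the defining end of each $T_i$ and each $T_i'$, so the construction above applies verbatim and yields $f \in \Homeo(M)$ with $f(T_2) = T_2'$ and $f(T_1) = T_1'$, hence $f(P) = P'$. Then $f(M \ssm P^{\mathrm o}) = M \ssm (P')^{\mathrm o}$, and since $f$ carries telescope components to telescope components, it maps the non-telescope component $U$ onto the non-telescope component $U'$. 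Therefore $f(C) = f(P) \cup f(U) = P' \cup U' = C'$, completing the proof.
\end{proof}
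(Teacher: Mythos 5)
Your overall skeleton is the same as the paper's: reduce to a common outer telescope via Proposition~\ref{prop:amb_same_tel}, arrange a common defining end via Lemma~\ref{lem:abs_same_tel}(ii)\&(iii), and then move the inner telescope into place without disturbing the outer one. The problem is the justification of that last, crucial step. You assert that any two separating curves in \( T_2' \) separating \( \partial T_2' \) from \( e \) are interchangeable by a homeomorphism supported in \( T_2' \), citing the classification of surfaces, the Jordan--Sch\"onflies theorem, and Lemma~\ref{lem:abs_same_tel}(iii),(v). But part (iii) only permutes defining ends and part (v) only gives an abstract homeomorphism between a telescope and its extension; neither says anything about the complementary pieces \( T_2' \ssm \widetilde T_1^{\mathrm o} \) and \( T_2' \ssm (T_1')^{\mathrm o} \). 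To run a cut-and-reglue argument via the classification of surfaces you would need to know that these two complementary pieces are homeomorphic (with boundary behaviour that lets you extend by the identity outside \( T_2' \)) --- and they are precisely two tubes, so this is essentially the statement being proved. As written, the key step is circular, or at best unsupported by the lemmas you cite.

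The paper closes exactly this gap using condition (iii) of Definition~\ref{def:telescope} rather than the classification of surfaces. In your notation: \( \widetilde T_1 \) and \( T_1' \) are maximal telescopes inside \( T_2' \) with common defining end \( e \), and \( T_2' \) is an extension of each (extensions are preserved by ambient homeomorphisms, so \( T_2' = h(T_2) \) is an extension of \( \widetilde T_1 = h(T_1) \)). Choose a separating simple closed curve \( b \) separating \( e \) from both \( \partial \widetilde T_1 \) and \( \partial T_1' \), so that \( b \subset \widetilde T_1 \cap T_1' \). Condition (iii), applied once to \( \widetilde T_1 \) and once to \( T_1' \) with extension \( T_2' \), produces homeomorphisms supported in \( T_2' \) carrying \( b \) onto \( \partial \widetilde T_1 \) and onto \( \partial T_1' \), respectively; the appropriate composite is then your \( g \), supported in \( T_2' \), with \( g(\widetilde T_1) = T_1' \). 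You flagged this very idea in your preamble but replaced it in the written proof by the classification-of-surfaces assertion; swapping that paragraph for this two-line application of the definition repairs the argument. Your treatment of the capped-tube case is fine, and slightly more explicit than the paper's.
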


\begin{proof}
Let \( M \) be a telescoping 2-manifold, and let \( P, P' \subset M \) be  tubes in \( M \). 
Let \( T_1 \subset T_2 \) be maximal telescopes defining \( P \), and let \( T_1' \subset T_2' \) be maximal telescopes defining \( P' \). 
By Proposition~\ref{prop:amb_same_tel}, we can assume that \( T_2 = T_2' \). 
Then, \( T_2 \) is an extension of both \( T_1 \) and \( T_1' \). 
By Lemma~\ref{lem:abs_same_tel}(ii)\&(iii),  we may assume that \( T_1 \) and \( T_1' \) share a defining end, call it \( \mu \). 
Therefore, there exists a separating simple closed curve \( b \) in \( T_2 \) that separates \( \mu \) from each of \ \( \partial T_1 \) and \( \partial T_1' \). 
Hence, there exists \( f,f' \co M \to M \) supported in \( T_2 \) such that \( f(\partial T_1) = b \) and \( f'(\partial T_1') = b \), and so \( (f^{-1}\circ f')(T_1') = T_1 \).
In particular, \( (f^{-1}\circ f')(P') = P \), as desired.
The statement for capped tubes follows immediately from the fact that any two tubes are ambiently homeomorphic. 
\end{proof}

The following lemma is the final piece that will allow us to accomplish the goal laid out in the beginning of the section. 

\begin{Lem}
\label{lem:tube_ends}
Let \( M \) be a telescoping 2-manifold, and let \( P \) be a tube in \( M \). 
If \( \mathscr U \) is a clopen subset of \( \Ends(M) \) that is either disjoint from the set of maximal ends of \( M \) or partitions the set of maximal ends into two perfect sets, then \( \widehat P \cup \mathscr U \) is homeomorphic to \( \widehat P \). 
Additionally, if \( M \) has a unique maximal end and \( C \) is a capped tube in \( M \), then \( \widehat C \) is homeomorphic to \( \widehat P \)\footnote{As a consequence, \( C \) is homeomorphic to the surface obtained by capping off one of the boundary components of \( P \) with a disk (and hence the choice of name).}. 
\end{Lem}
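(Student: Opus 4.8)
The plan is to reduce everything to the classification of the ends of $M$ relative to the "spacing" of its maximal ends, and then to translate the given hypotheses on $\mathscr U$ into statements about the triple $(\Ends(M), \Ends_{np}(M), \Ends_{no}(M))$. Recall that a tube $P = T_2 \smallsetminus T_1^{\mathrm o}$ is the difference of two maximal telescopes; by Lemma~\ref{lem:abs_same_tel}(v), $T_2$ is abstractly homeomorphic to $T_1$, and hence $\widehat{T_2}$ is homeomorphic to $\widehat{T_1}$ as triples (with the distinguished subsets of non-planar and non-orientable ends). First I would set up notation: write $\widehat{T_2} = \widehat{T_1} \sqcup \widehat P$ as a partition of clopen subsets of $\Ends(M)$, and observe that the self-homeomorphism of $T_2$ realizing the abstract homeomorphism with $T_1$ carries $\widehat{T_2}$ onto $\widehat{T_1}$ preserving non-planar and non-orientable ends. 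Iterating this "shift" shows that $\widehat{T_1}$ is homeomorphic (as a triple) to $\widehat{T_1} \sqcup \widehat P \sqcup \widehat P \sqcup \cdots$ — that is, $\widehat{T_1}$ absorbs countably many disjoint copies of $\widehat P$ — and in particular $\widehat P$ absorbs itself: $\widehat P \sqcup \widehat P \cong \widehat P$.

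Next I would handle the clopen set $\mathscr U$. The key observation is that, by stability of the maximal ends (a consequence of Lemma~\ref{lem:weakly_ss}, as noted in the excerpt) together with Lemma~\ref{lem:maximal_prop}, every maximal end has a neighborhood basis of maximal telescopes whose ends look like $\widehat{T_1}$, and non-maximal ends are $\preceq$-dominated by maximal ends. This lets me show: if $\mathscr V$ is any clopen subset of $\Ends(M)$ disjoint from the maximal ends, then $\widehat P \sqcup \mathscr V \cong \widehat P$ — because $\mathscr V$, being a compact set of non-maximal ends, embeds (as a triple, i.e. respecting $\Ends_{np}$ and $\Ends_{no}$) into a neighborhood of a maximal end that is already "used up" inside $\widehat P$, so absorbing $\mathscr V$ is the same as absorbing a sub-collection of the end-data already present by the self-absorption of the previous paragraph. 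If instead $\mathscr U$ partitions the maximal ends into two perfect sets, then $\mathscr U$ itself contains a clopen neighborhood of a maximal end, and such a neighborhood is the end space of a maximal telescope; since any two maximal telescopes are ambiently homeomorphic (Proposition~\ref{prop:amb_same_tel}), $\mathscr U$ with its triple structure is homeomorphic to $\widehat{T_1}$ (possibly after first absorbing the non-maximal part of $\mathscr U$ into it as in the disjoint case), and then $\widehat P \sqcup \widehat{T_1} \cong \widehat{T_2} \cong \widehat{T_1}$, while we also need $\widehat{T_1} \cong \widehat P \sqcup \widehat P \sqcup \cdots$ to conclude $\widehat P \sqcup \mathscr U \cong \widehat P$; this is exactly the self-absorption established above. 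Throughout, "homeomorphic" must be upgraded to "homeomorphic as a triple," which is automatic because all the homeomorphisms in sight come from genuine homeomorphisms of subsurfaces of $M$, hence preserve planarity and orientability of ends.

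For the capped-tube statement, suppose $M$ has a unique maximal end and $C = P \cup U$ where $U$ is the non-telescope component of $M \smallsetminus P^{\mathrm o}$. Then $\widehat C = \widehat P \sqcup \widehat U$, and $\widehat U$ is a clopen subset of $\Ends(M)$ disjoint from the (unique) maximal end, since the maximal end is a defining end of the telescope side; so $\widehat C \cong \widehat P$ by the disjoint-from-maximal-ends case already proved. I expect the main obstacle to be the precise bookkeeping in the second paragraph: verifying that a clopen set of non-maximal ends really does embed, as a triple, into the end-data already appearing inside $\widehat P$. This requires using the preorder $\preceq$ carefully — a non-maximal end $\varepsilon$ has neighborhoods embedding into neighborhoods of the maximal ends, but one must ensure the embedding can be taken to respect $\Ends_{np}$ and $\Ends_{no}$ simultaneously and to cover a whole clopen piece at once, not just germ-wise; this is where the stable-end machinery of \cite{MannLarge} and \cite{VlamisHomeomorphism}, and the fact that a telescoping 2-manifold is either planar or of infinite genus (so the non-planar/non-orientable end-data is itself "self-similar" near each maximal end), does the real work.
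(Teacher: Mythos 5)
Your strategy---absorbing $\mathscr U$ purely at the level of end spaces via a swindle plus the stable-end machinery---is genuinely different from the paper's, and it has a gap at exactly the step you flag as the main obstacle. The paper's proof is much more direct and never manipulates end spaces abstractly: after using the maximality of a telescope component $T$ of $M \ssm P^{\mathrm o}$ to arrange that $\mathscr U \subset \widehat T$ and that the defining end $\mu$ of $T$ lies outside $\mathscr U$, it realizes $\mathscr U$ as $\widehat S$ for a subsurface $S$ co-bounded by $\partial T$ and a separating curve $b$ separating $\partial T$ from $\mu$; condition (iii) of Definition~\ref{def:telescope} then supplies an ambient homeomorphism $\tau$ supported in $P \cup T$ with $\tau(b) = \partial T$, so that $\hat\tau(\widehat P \cup \mathscr U) = \widehat P$ in one stroke. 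Your outline never invokes condition (iii), which is the only place the telescoping hypothesis actually does work here, and it is precisely the mechanism that transports a clopen set of ends from a telescope into a tube.

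The concrete gaps are these. First, from $\widehat{T_1} \sqcup \widehat P \cong \widehat{T_1}$ one cannot conclude $\widehat P \sqcup \widehat P \cong \widehat P$: if $X \sqcup Y \cong X$ it does not follow that $Y \sqcup Y \cong Y$ (take $X$ a convergent sequence with its limit and $Y$ a single point). The self-absorption of $\widehat P$ is true, but it is a consequence of the lemma, not an input available from the swindle. Second, the principle ``$\mathscr V$ embeds into end-data already present, hence is absorbed'' is false in general---a convergent sequence embeds into a Cantor set, yet their disjoint union is not a Cantor set---and when $M$ has finitely many maximal ends, $\widehat P$ contains \emph{no} maximal ends of $M$ at all, so the self-similarity at maximal ends that you appeal to lives in the telescopes flanking $P$, not inside $\widehat P$; some homeomorphism must move $\mathscr V$ from a neighborhood of a maximal end into $\widehat P$, and that is the curve-pushing of condition (iii). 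Third, in the perfect case your chain terminates at $\widehat P \sqcup \mathscr U \cong \widehat{T_1}$, and the needed identification $\widehat{T_1} \cong \widehat P$ is never established. Your reduction of the capped-tube statement to the first part is correct and matches the paper.
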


\begin{proof}
As \( \widehat P \cup \mathscr U = \widehat P \cup (\mathscr U \ssm \widehat P) \), we may assume that \( \widehat P \cap \mathscr U = \varnothing \). 
Let \( T \) be a component of \( M \ssm P^\mathrm o \) that is a maximal telescope. 
Choose a homeomorphism \( f \co M \to M \) such that \( f(M\ssm T) \subset T \), and by replacing \( \mathscr U \) with \( \hat f (\mathscr U) \), we may assume that \( \mathscr U \subset \widehat T \). 
Now, let \( \mu \) be a defining end of \( T \).
If \( M \) has a finite number of maximal ends, then we have \( \mu \not\in \mathscr U \), as \( \mathscr U \) is forced to satisfy the first condition.
Now, suppose \( M \) has a perfect set of maximal ends. 
Then we can find a maximal telescope \( T' \subset T \) such that \( \mu \notin \widehat T' \).
We can then choose a homeomorphism \( g \co M \to M \) such that \( g(M \ssm T') \subset T' \), in which case, by replacing \( \mathscr U \) with \( \hat g(\mathscr U) \), we can assume that \( \mu \notin \mathscr U \). 

We can then choose a separating closed curve \( b \) that co-bounds a subsurface \( S \) with \( \partial T \) such that \( \widehat S = \mathscr U \). 
Now, there exists a homeomorphism \( \tau \co M \to M \) supported in \( P \cup T \) mapping \( b \) to \( \partial T \); hence, \( \hat\tau(\widehat P \cup \mathscr U) = \widehat P \).
In particular, \( \widehat P \cup \mathscr U \) is homeomorphic to \( \widehat P \), as desired. 

To finish, suppose \( M \) has a unique maximal end and that \( C \) is a capped tube in \( M \).
Then, we can write \( C = U \cup P \), where \( P \) is a tube. 
Now, \( \widehat C = \widehat U \cup \widehat P \), and  so by the above argument, \( \widehat C \) is homeomorphic to \( \widehat P \). 
\end{proof}

We can now give a detailed picture of the topology of telescoping 2-manifolds and  how it mimics that of the plane and the annulus.
Note that a tube in each of the 2-sphere, the plane, and the annulus is homeomorphic to a compact annulus, and also note that a capped tube in the plane is homeomorphic to the closed unit disk. 

We say a subset \( I \) of \( \bz \) is an \emph{interval of integers} if there exists \( j, k \in \bz \cup \{\pm \infty\} \) such that \[ I  = \{ \ell \in \bz : j < \ell < k \}. \]

\begin{Def}[Chain product]
Let \( \Sigma \) be a surface with two boundary components, denoted \( \partial_- \) and \( \partial_+ \), and fix a homeomorphism \( f \co \partial_+ \to \partial_- \). 
Given an interval of integers \( I \), we define the \emph{chain product} of \( \Sigma \) and \( I \), denoted \( \Sigma \tilde\times I \), to be the quotient space \( \Sigma \times I / \sim \), where \( \sim \) is the equivalence relation generated by \( (x,i) \sim (f(x), i+1) \) for all \( x \in \partial_+ \) and for all \( i \in I \ssm \{\sup I\} \). 
\end{Def}

The following proposition is readily deduced from Lemma~\ref{lem:tube_ends} and the classification of surfaces. 

\begin{Thm}
\label{thm:topology}
Let \( M \) be a telescoping 2-manifold, and let \( P \) be a tube in \( M \).
\begin{enumerate}
\item If \( I \) is a finite interval of integers, then \( P \tilde\times I \) is homeomorphic to \( P \). 
\item Every maximal telescope in \( M \) is homeomorphic to \( P \tilde\times \bn \).
\item If \( M \) has at least two maximal ends, then \( M \) is homeomorphic to \( P \tilde\times \bz \).
\item If \( M \) has a unique maximal end, then \( M \) is homeomorphic to the 2-manifold obtained by gluing a disk to the boundary of \( P \tilde\times \bn \).
Moreover, if \( C \) is a capped tube in \( M \) and \( I \) is a finite interval of integers, then \( C \) is homeomorphic to the surface obtained by gluing a disk to one of the boundary components of \( P \tilde\times I \).  
\qed
\end{enumerate}
\end{Thm}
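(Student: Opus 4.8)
The plan is to run everything through the classification of surfaces (which extends to surfaces with compact boundary): two connected surfaces with compact boundary are homeomorphic as soon as they have the same genus, the same orientability class, the same number of boundary circles, and homeomorphic end-triples \((\Ends, \Ends_{np}, \Ends_{no})\). An immediate benefit is that this sidesteps any dependence of a chain product \(\Sigma \tilde\times I\) on the chosen gluing map \(f\), since none of these invariants depends on \(f\). In each of (1)--(4) the genus, the orientability class, and the number of boundary circles are read off by inspection, using that a telescoping 2-manifold is planar or of infinite genus and is orientable or infinitely non-orientable, together with the fact that all tubes are pairwise ambiently homeomorphic (Proposition~\ref{prop:amb_tube}) as are all maximal telescopes (Proposition~\ref{prop:amb_same_tel}). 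So the entire content is the matching of end-triples, and the engine for that is Lemma~\ref{lem:tube_ends}.

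The \emph{key computation} is that \(\widehat P \sqcup \widehat P \cong \widehat P\) as triples. Write \(P = T_2 \ssm T_1^{\mathrm o}\) with \(T_1 \subset T_2\) maximal telescopes, use Lemma~\ref{lem:tube_exists} to pick a maximal telescope \(T_3\) that is an extension of \(T_2\), and set \(P' = T_3 \ssm T_2^{\mathrm o}\); this is a tube, so \(\widehat{P'} \cong \widehat P\) as triples (Proposition~\ref{prop:amb_tube}), and \(\widehat{P'}\) is disjoint from \(\widehat P\) inside \(\Ends(M)\). If \(M\) has finitely many maximal ends, then \(\widehat{T_2}\) and \(\widehat{T_3}\) contain the same maximal ends (in the doubleton case each maximal telescope contains exactly one; in the singleton case each contains the unique maximal end), so \(\widehat{P'}\) meets no maximal end and Lemma~\ref{lem:tube_ends} gives \(\widehat P \cup \widehat{P'} \cong \widehat P\); if \(M\) has a perfect set of maximal ends, then \(\widehat{P'}\) either meets no maximal end or meets them in a proper nonempty clopen subset which, together with its complement, is clopen in a perfect set and hence perfect, so \(\widehat{P'}\) partitions the maximal ends into two perfect sets and again Lemma~\ref{lem:tube_ends} applies. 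The homeomorphism produced by Lemma~\ref{lem:tube_ends} is induced by a homeomorphism of \(M\), so it respects \(\Ends_{np}\) and \(\Ends_{no}\); combined with \(\widehat{P'} \cong \widehat P\) this gives \(\widehat P \sqcup \widehat P \cong \widehat P\), and by induction \(\bigsqcup_{i=1}^{n} \widehat P \cong \widehat P\) for all \(n \geq 1\). Since gluing along compact circles creates no new ends, \(\Ends(P \tilde\times I) = \bigsqcup_{i \in I} \widehat P \cong \widehat P\), proving (1).

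For (2), inside a maximal telescope \(T\) with defining end \(\mu\), build nested maximal telescopes \(T = T_0 \supset T_1 \supset T_2 \supset \cdots\) with \(\bigcap_i \widehat{T_i} = \{\mu\}\); this is possible because \(\mu\) admits a neighborhood basis of maximal telescopes (Lemma~\ref{lem:deep_lens}) and \(\Ends(M)\) is metrizable. Writing \(P_i = T_{i-1} \ssm T_i^{\mathrm o}\), each \(P_i\) is a tube and \(\Ends(T) = \bigl(\bigsqcup_{i \geq 1} \widehat{P_i}\bigr) \cup \{\mu\}\) with \(\mu\) the unique accumulation point, which is precisely \(\Ends(P \tilde\times \bn)\); hence \(T \cong P \tilde\times \bn\). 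Parts (3) and (4) are then bookkeeping on top of (1) and (2). If \(M\) has at least two maximal ends and \(T\) is a maximal telescope, then \(M \ssm T^{\mathrm o}\) is a maximal telescope (Lemma~\ref{lem:abs_same_tel}(vi)), so \(\Ends(M) = \widehat T \sqcup \widehat{M \ssm T^{\mathrm o}} \cong \Ends(P \tilde\times \bn) \sqcup \Ends(P \tilde\times \bn) \cong \Ends(P \tilde\times \bz)\), the last step being the reindexing of two \(\bn\)-chains into a single \(\bz\)-chain; since \(M\) and \(P \tilde\times \bz\) have no boundary, (3) follows. If \(M\) has a unique maximal end, decompose \(M = C \cup T'\) along the boundary of a capped tube \(C\), where \(T'\) is a maximal telescope; then \(\widehat C \cong \widehat P\) by Lemma~\ref{lem:tube_ends} and \(\widehat{T'} \cong \Ends(P \tilde\times \bn)\) by (2), so \(\Ends(M) = \widehat C \sqcup \widehat{T'} \cong \widehat P \sqcup \Ends(P \tilde\times \bn) \cong \Ends(P \tilde\times \bn)\), which is the end space of \((P \tilde\times \bn) \cup \mathbb D\) since the disk contributes no ends; the ``moreover'' clause is the same comparison with \(\widehat C \cong \widehat P \cong \bigsqcup_{i \in I} \widehat P = \Ends\bigl((P \tilde\times I) \cup \mathbb D\bigr)\).

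I expect the main obstacle to be the bookkeeping in the perfect-set-of-maximal-ends case: one must verify that every clopen piece of \(\Ends(M)\) being shuffled around meets the maximal ends in a subset satisfying the hypothesis of Lemma~\ref{lem:tube_ends} (perfect, with perfect complement) and that every rearrangement carries the non-planar and non-orientable loci correctly---routine, but needing care. The only other point not to gloss over is that the end space of \(\Sigma \tilde\times I\) really is the evident disjoint union of copies of \(\Ends(\Sigma)\) with its evident topology, independently of the gluing map, which is exactly why it is cleanest to argue entirely within the classification of surfaces.
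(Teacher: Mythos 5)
Your proposal is correct and is essentially the paper's own argument: the paper gives no written proof beyond the remark that the theorem ``is readily deduced from Lemma~\ref{lem:tube_ends} and the classification of surfaces,'' and your write-up is exactly that deduction fleshed out (reduce everything to matching end-triples, then use Lemma~\ref{lem:tube_ends} to absorb the end set of an adjacent tube). The two points you leave implicit---that a tube has genus and crosscap number \( 0 \) or \( \infty \), which follows from \( P \cong P \cup P' \) for adjacent tubes via Proposition~\ref{prop:amb_tube}, and that the nested telescopes in (2) can be chosen so that consecutive differences are genuinely tubes---are routine verifications of the same character as those the paper omits.
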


With Theorem~\ref{thm:topology} at hand, it may be more natural to define the notion of a telescoping 2-manifold to be a manifold that admits a subsurface \( P \) that satisfies the conclusions of Theorem~\ref{thm:topology}; such a definition may also be readily generalized to higher dimensions. 
We chose the definition given in the introduction to more closely match the original definition given by Mann--Rafi.
We remark that a definition of telescoping based on Theorem~\ref{thm:topology} provokes a natural comparison to notion of a portable manifold introduced by Burago--Ivanov--Polterovich \cite{BuragoConjugation}. 
The connected component of the identity in the group of diffeomorphisms of a portable manifold exhibits various boundedness properties similar to the homeomorphism group of a telescoping 2-manifold; in particular, every conjugation-invariant norm on the group is bounded and the group is uniformly perfect of width at most two. 
Moreover, as \( \br^2 \) and \( \bS^1 \times \br \) are portable manifolds, the telescoping 2-manifolds are a natural generalization of a portable manifold in dimension two.

Observe that, by Theorem~\ref{thm:topology}(1), a tube in a telescoping 2-manifold, as a surface itself, must either be compact or contain an equivalence class of maximal ends that is infinite.
As an infinite equivalence class of maximal ends is a perfect set, this shows that the end space of an infinite-type telescoping 2-manifold is uncountable, which extends \cite[Proposition~3.6]{MannLarge} to the class of telescoping 2-manifolds as defined in this article. 
Though we will not use this fact, we record it here.

\begin{Cor}
The end space of an infinite-type telescoping 2-manifold is uncountable.
\qed
\end{Cor}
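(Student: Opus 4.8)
The plan is to deduce the Corollary from Theorem~\ref{thm:topology}(1) applied to a tube $P$ that is itself non-compact (which must exist when $M$ is of infinite type), together with the observation that non-compact tubes contain infinitely many maximal ends.

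\medskip

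First I would argue that an infinite-type telescoping 2-manifold $M$ contains a non-compact tube. Fix a maximal telescope $T$ (which exists by the definition of telescoping), and let $P$ be a tube in $M$; such a $P$ exists by Lemma~\ref{lem:tube_exists}, as it provides maximal telescopes $T_1 \subset T_2$ with $T_2$ an extension of $T_1$, so $P = T_2 \ssm T_1^{\mathrm o}$ is a tube. If every tube in $M$ were compact, then by Theorem~\ref{thm:topology}(2) every maximal telescope would be homeomorphic to $P \,\tilde\times\, \bn$ with $P$ compact; but $P \,\tilde\times\, \bn$ is then a one-ended surface of finite type capped at one end, which would make $M$ (by Theorem~\ref{thm:topology}(3) or (4)) of finite type, a contradiction. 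So we may fix a non-compact tube $P$.

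\medskip

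Next I would show $\widehat P$ is infinite. By Theorem~\ref{thm:topology}(1), $P \,\tilde\times\, I$ is homeomorphic to $P$ for every \emph{finite} interval of integers $I$. Now $\widehat{P \,\tilde\times\, I}$ naturally contains $|I|$ disjoint copies of the end space contributed by the interior of each $\Sigma$-factor, so if $\widehat P$ were a nonempty finite set of cardinality $n$, iterating the chain product would force $\widehat P$ to have cardinality at least $2n$, hence $\widehat P = \varnothing$; but then $P$ is compact, contrary to our choice. (One must be slightly careful: the ends of a chain product come from the ends of the $\Sigma$-factors \emph{not} absorbed into the gluing, so this needs the mild observation that a tube contributes at least one ``interior'' end per factor once it has any end at all --- this is where Lemma~\ref{lem:tube_ends} and the fact that $\partial_\pm$ are circles, so contribute no ends, come in.) Thus $\widehat P$ is infinite.

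\medskip

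Finally, since the ends of $M$ that lie in $\widehat P$ form, by Lemma~\ref{lem:maximal_prop}(2) together with the structure from Theorem~\ref{thm:topology}, an equivalence class under $\preceq$ --- and any infinite set of pairwise-equivalent ends is a perfect set (a point $e$ equivalent to infinitely many ends has every neighborhood containing a homeomorphic copy of a neighborhood of $e$, hence containing infinitely many ends, so $e$ is not isolated) --- we conclude $\widehat P$ is a perfect subset of $\Ends(M)$, hence uncountable, hence $\Ends(M) \supseteq \widehat P$ is uncountable. \qed{}

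\medskip

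The main obstacle I anticipate is making the counting argument in the second step fully rigorous: one needs to pin down exactly which ends of $\Sigma \,\tilde\times\, I$ are ``new'' versus identified under the gluing $\sim$, and confirm that a tube with at least one end genuinely contributes a growing supply of ends under iterated chain products rather than having all its ends concentrated at the two capping sides (which are circles and hence contribute none). Invoking Theorem~\ref{thm:topology}(1)'s homeomorphism $P \cong P\,\tilde\times\, I$ sidesteps most of the bookkeeping, but one still must rule out the degenerate possibility that $\widehat P$ is a single point absorbed repeatedly --- which is precisely where the hypothesis that $P$ is a tube (not merely any subsurface with two circle boundary components) and Lemma~\ref{lem:tube_ends} are essential.
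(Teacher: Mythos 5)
Your first two steps track the paper's own argument: the homeomorphism $P \cong P\,\tilde\times\, I$ from Theorem~\ref{thm:topology}(1) duplicates $\widehat P$ (the gluing circles are compact, so $\Ends(P\,\tilde\times\, I)$ is a disjoint union of $|I|$ copies of $\Ends(P)$), forcing $\widehat P$ to be empty or infinite. But your third step has a genuine gap, and both of its key claims are false as stated. First, the ends of $M$ lying in $\widehat P$ do \emph{not} form a single $\preceq$-equivalence class: Lemma~\ref{lem:maximal_prop}(2) only concerns the \emph{maximal} ends of $M$. In Example~(2) of Section~\ref{sec:examples}, a tube sees isolated punctures from $A$ as well as non-isolated ends of the Cantor sets, and these are comparable but not equivalent. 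Second, and more seriously, an infinite set of pairwise-equivalent ends need not be perfect: the set $A$ in that same example is an infinite set of pairwise-equivalent isolated planar ends, each of which is isolated in $\Ends(M)$. Your parenthetical argument breaks exactly here: a neighborhood of an end $e'$ equivalent to $e$ may contain only the single end $e'$ (as for a puncture), so the homeomorphic copy of it sitting inside a neighborhood of $e$ produces no ends near $e$ other than possibly $e$ itself. Consequently, knowing only that $\widehat P$ is infinite does not rule out its being a countable compact set built from isolated points.

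The missing ingredient is \emph{maximality}. The paper's route is: a non-compact tube $P$, viewed as a surface in its own right, has a maximal end $\mu$; the duplication $P \cong P\,\tilde\times\, I$ shows the equivalence class of $\mu$ in $\Ends(P)$ is infinite; and an infinite equivalence class of \emph{maximal} ends is a perfect set --- maximality is what forces the class to be closed and without isolated points (cf.\ Mann--Rafi). Alternatively, you can bypass the preorder entirely: your step 2 really gives $\widehat P \cong \widehat P \sqcup \widehat P$, and no nonempty countable compact Hausdorff space is homeomorphic to two disjoint copies of itself (by the Mazurkiewicz--Sierpi\'nski classification it would be of the form $\omega^\alpha \cdot n + 1$, and doubling doubles $n$), so $\widehat P$ is either empty or uncountable. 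Either repair completes the proof; as written, step 3 does not. A minor further point: in step 1, $P\,\tilde\times\,\bn$ with $P$ compact is of finite type only once you note, again via $P \cong P\,\tilde\times\, I$, that a compact tube must have genus zero and no cross-caps, i.e.\ must be an annulus.
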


We end with defining a decomposition of a telescoping 2-manifold into tubes that is guaranteed by Theorem~\ref{thm:topology}.

\begin{Def}[Tubular decomposition]
Let \( M \) be a telescoping 2-manifold.
If \( M \) has at least two maximal ends, then a \emph{tubular decomposition} of \( M \) is a sequence of tubes \( \{P_n\}_{n\in\bz} \) with pairwise-disjoint interiors such that \( P_n \) and \( P_{n+1} \) share a boundary component and \( M = \bigcup_{n\in\bz} P_n \). 
Otherwise, \( M \) has a unique maximal end and a \emph{tubular decomposition} of \( M \) is a sequence of tubes \( \{P_n\}_{n\in \bn} \) together with a capped tube \( P_0 \) such that \( P_{n-1} \) and \( P_{n} \) share a boundary component for \( n \in \bn  \)\footnote{We adopt the convention that \( 0 \not\in \bn \); this is relevant to the veracity of several statements in the article.} and such that \( M = \bigcup_{n=0}^\infty P_n \). 
\end{Def}

As a corollary of Theorem~\ref{thm:topology}, we have the following.

\begin{Cor}
Every telescoping 2-manifold admits a tubular decomposition.
Moreover, if \( \{ P_j\}_{j\in J} \) is a tubular decomposition of a telescoping 2-manifold, indexed by an interval of integers \( J \), then given a finite interval of integers \( I \subset J \), the union \( \bigcup_{i\in I} P_i \) is a tube. 
\qed
\end{Cor}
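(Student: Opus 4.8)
The plan is to build a tubular decomposition by slicing a maximal telescope---together with its complement, when there are at least two maximal ends---into a sequence of tubes accumulating on a defining end, and then to obtain the ``moreover'' clause from the fact that, in any tubular decomposition, the maximal telescopes witnessing that a given \( P_j \) is a tube are forced to be two consecutive ``half-unions'' \( \bigcup_{i\le j-1}P_i\subset\bigcup_{i\le j}P_i \).

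\emph{Existence.} Fix a maximal telescope \( T \) in \( M \) and a defining end \( e \) of \( T \); by Lemma~\ref{lem:maximal_prop}(1), \( e \) is maximal. First I would pick a nested sequence of separating simple closed curves \( \partial T=b_0,b_1,b_2,\dots \) inside \( T \), with \( b_j \) separating \( b_{j-1} \) from \( e \) and with \( \{b_j\}_{j\ge1} \) a neighborhood basis of \( e \); this is possible since \( e \) has a neighborhood basis of subsurfaces with connected compact boundary. By Lemma~\ref{lem:deep_lens} each \( b_j \) bounds a maximal telescope \( T_j\subseteq T \)---the side seeing \( e \)---and, setting \( T_0:=T \), the fact that the \( T_j \) form a neighborhood basis of \( e \) gives \( \bigcap_j T_j=\varnothing \). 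Hence the subsurfaces \( Q_j:=T_{j-1}\ssm T_j^{\mathrm o} \) (\( j\ge1 \)) have pairwise-disjoint interiors, consecutive ones share the curve \( b_j \), satisfy \( \bigcup_{j\ge1}Q_j=T \), and each \( Q_j \) is a tube defined by the pair \( T_j\subset T_{j-1} \). If \( M \) has at least two maximal ends, then \( T^c:=M\ssm T^{\mathrm o} \) is again a maximal telescope (Lemma~\ref{lem:abs_same_tel}(vi)); performing the same construction inside \( T^c \) toward a defining end \( e^c\neq e \) and gluing the two families of tubes along \( \partial T=\partial T^c \) produces a tubular decomposition indexed by \( \bz \). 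If \( M \) has a unique maximal end \( \mu \), then instead I would combine the tubes \( Q_1,Q_2,\dots \) coming from a maximal telescope \( T \) with \( \mu\in\widehat T \) with a capped tube \( Q_0 \) filling in \( M\ssm T^{\mathrm o} \) (such a capped tube exists by Theorem~\ref{thm:topology}(4)), giving a tubular decomposition indexed by \( \{0\}\cup\bn \). (Alternatively, the existence of a tubular decomposition follows directly from Theorem~\ref{thm:topology}(2)--(4), which exhibit \( M \) and its maximal telescopes explicitly as chain products.)

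\emph{The ``moreover'' clause.} Let \( \{P_j\}_{j\in J} \) be a tubular decomposition and \( I=\{a,\dots,b\}\subseteq J \) a finite interval. Each boundary component of a tube \( P_j \) is a separating simple closed curve in \( M \), so each side of it is a subsurface with connected compact boundary; combined with the disjoint-interiors condition, this forces the maximal telescopes defining \( P_j \) to be either \( \bigcup_{i\le j-1}P_i\subset\bigcup_{i\le j}P_i \) or \( \bigcup_{i\ge j+1}P_i\subset\bigcup_{i\ge j}P_i \). In the case of at least two maximal ends, applying Lemma~\ref{lem:abs_same_tel}(vi) to these shows that \emph{every} half-union \( \bigcup_{i\le k}P_i \) and \( \bigcup_{i\ge k}P_i \) is a maximal telescope, whence \( \bigcup_{i\in I}P_i=\bigl(\bigcup_{i\le b}P_i\bigr)\ssm\bigl(\bigcup_{i\le a-1}P_i\bigr)^{\mathrm o} \) is a tube---provided one checks that the larger half-union is an extension of the smaller. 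In the unique-maximal-end case the same argument makes \( \bigcup_{i\ge k}P_i \) a maximal telescope for every \( k\ge1 \), so the identical conclusion holds whenever \( 0\notin I \) (now writing \( \bigcup_{i\in I}P_i=\bigl(\bigcup_{i\ge a}P_i\bigr)\ssm\bigl(\bigcup_{i\ge b+1}P_i\bigr)^{\mathrm o} \)), while if \( 0\in I \) the union \( \bigcup_{i\in I}P_i=M\ssm\bigl(\bigcup_{i\ge b+1}P_i\bigr)^{\mathrm o} \) is a capped tube.

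\emph{The main obstacle.} The step I expect to require genuine care---and which recurs in both parts above---is checking condition~(iii) of Definition~\ref{def:telescope} for the pairs \( T_1\subset T_2 \) of maximal telescopes produced along the way: that any separating simple closed curve in \( T_1 \) separating \( \partial T_1 \) from a defining end of \( T_1 \) can be carried onto \( \partial T_1 \) by a homeomorphism of \( M \) supported in \( T_2 \). This is a statement about the ambient group \( \Homeo(M) \) rather than a homeomorphism type, so it is not supplied by Theorem~\ref{thm:topology}; I would prove it by a change-of-coordinates argument like the one in the proof of Proposition~\ref{prop:amb_tube}, using Lemma~\ref{lem:deep_lens} to recognize the given curve as the boundary of a maximal telescope and the fact that \( T_1 \) is itself a maximal telescope to straighten it. Conditions~(i) and~(ii) of Definition~\ref{def:telescope} are routine: (i) holds because ``separating from the defining end'' propagates along the chain, and (ii) holds because any half-union divides the maximal ends---a single \( \Homeo(M) \)-orbit, by Lemma~\ref{lem:abs_same_tel}(i)---into two nonempty parts whenever there is more than one. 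Everything else is bookkeeping with the classification of surfaces and the lemmas of this section.
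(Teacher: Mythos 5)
Your overall architecture is reasonable, and the place you flag as the ``main obstacle'' is exactly where the content lies --- but your proposed way of filling it does not work, and this is a genuine gap. In the existence step you take an \emph{arbitrary} nested family of separating curves \( b_j \) and invoke Lemma~\ref{lem:deep_lens} to get maximal telescopes \( T_j \); for \( Q_j = T_{j-1}\ssm T_j^{\mathrm o} \) to be a tube you must show that \( T_{j-1} \) is an \emph{extension} of \( T_j \), i.e.\ verify condition~(iii) of Definition~\ref{def:telescope}: every separating curve \( b \) in \( T_j \) separating \( \partial T_j \) from \( e \) is carried onto \( \partial T_j \) by a homeomorphism that is the identity off \( T_{j-1} \). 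Your sketch appeals to the maximality of \( T_j \) (which only yields a homeomorphism pushing \( M\ssm T_j \) into \( T_j \), with no control on its support) and to the proof of Proposition~\ref{prop:amb_tube} --- but that proof \emph{assumes} the ambient surface is already an extension of both telescopes involved; it never derives condition~(iii) from scratch. To produce the required homeomorphism supported in \( T_{j-1} \) one must match the genus, orientability class, and end triple of \( T_{j-1}\ssm T_b^{\mathrm o} \) (where \( T_b\subset T_j \) is the maximal telescope bounded by \( b \)) with those of \( T_{j-1}\ssm T_j^{\mathrm o} \); this is precisely the bookkeeping Lemma~\ref{lem:tube_ends} is designed for, and your outline never performs it. The same unresolved point resurfaces in your ``moreover'' paragraph, where you explicitly defer ``provided one checks that the larger half-union is an extension of the smaller''; note also that the definition of a tube only forces \emph{one} of the two complementary half-unions of \( P_j \) to form an extension pair, and a priori which side this is could vary with \( j \), so the pairs must be oriented coherently before your difference formula applies.

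The paper avoids all of this: the corollary is stated as an immediate consequence of Theorem~\ref{thm:topology}, whose underlying construction iterates Lemma~\ref{lem:tube_exists} --- every maximal telescope admits an extension that is again a maximal telescope --- so that consecutive telescopes in the chain are extension pairs \emph{by construction} (the extension structure is transported along ambient homeomorphisms rather than re-verified), with Lemma~\ref{lem:tube_ends} guaranteeing that the chain exhausts \( M \). Your own parenthetical alternative is essentially this route, and it is the one to take. For the ``moreover'' clause, once consecutive half-unions are known to be extension pairs, the non-consecutive nested pairs come for free because condition~(iii) composes: if \( T\subset S\subset R \) with \( S \) an extension of \( T \) and \( R \) an extension of \( S \), then a homeomorphism that is the identity off \( S \) is in particular the identity off \( R \), so \( R \) is an extension of \( T \). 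This is cleaner than, and should replace, the unfinished verification in your second paragraph.
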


\subsection{An application of Proposition~\ref{prop:amb_tube}}

There are two distinct ways in which we will use Proposition~\ref{prop:amb_tube} below, which we establish here.  
The first is the existence of a choice of the ambient homeomorphism in Proposition~\ref{prop:amb_tube} that is  supported in a tube.
The second will allow us to realize the restriction of an ambient homeomorphism to a tube by a homeomorphism supported in a tube, which is crucial to our factorization results in the following sections.
For these tasks, we will have to restrict to a specific type of  embedding of one tube into another, which is detailed in the following definition. 

\begin{Def}[Nestled tubes]
Given two tubes \( P_1 \) and \( P_2 \) in a telescoping 2-manifold, we say that \( P_1 \) is \emph{nestled} in \( P_2 \), denoted \( P_1 \Subset P_2 \), if \( P_1 \subset P_2 \) and each component of \( P_2 \ssm P_1^\mathrm o \) is a tube. 
We say a closed set \( V \) is \emph{nestled} in a tube \( P \) if \( V \) is contained in a tube that is nestled in \( P \). 
\end{Def}

\begin{Lem}
\label{lem:nestled}
Let \( P \), \( P' \), and \( P'' \) be tubes in a telescoping 2-manifold.
If \( P', P'' \Subset P \), then there exists an ambient homeomorphism \( h \) supported in \( P \) such that \( h(P') = P'' \). 
\end{Lem}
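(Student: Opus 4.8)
The strategy is to reduce everything to Proposition~\ref{prop:amb_tube} by gluing together locally-defined ambient homeomorphisms. Since $P'\Subset P$, the tube $P$ decomposes as $P = Q_- \cup P' \cup Q_+$, where $Q_-$ and $Q_+$ are the two components of $P\ssm (P')^{\mathrm o}$, each of which is a tube (and $P' \cup Q_+$, $P' \cup Q_-$, $Q_-\cup P' \cup Q_+ = P$ are tubes by the corollary on tubular decompositions). Similarly $P = R_- \cup P'' \cup R_+$. The first step is to observe that $P'$ and $P''$ are both tubes, hence abstractly homeomorphic by Lemma~\ref{lem:abs_same_tel}(iv)--(v)-type facts, and in fact ambiently homeomorphic in $M$ by Proposition~\ref{prop:amb_tube}; but I need the ambient homeomorphism to be \emph{supported in $P$}, which is the real content.

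First I would handle the "supported in a tube" upgrade of Proposition~\ref{prop:amb_tube} in the nestled setting: given $P'\Subset P$ and $P''\Subset P$, I want $h$ with $h(P')=P''$ and $\supp(h)\subset P$. The plan is to build $h$ as a composition. Use the defining maximal telescopes: let $T_1\subset T_2$ define $P'$ and $T_1''\subset T_2$ define $P''$ (after applying Proposition~\ref{prop:amb_same_tel} to arrange that the larger maximal telescope is the same $T_2$, chosen so that $T_2\cup(\text{complement}) $ realizes the outer boundary component of $P$; this requires being a bit careful that the ambient moves arranging $T_2 = T_2'$ can be taken to fix things outside $P$, which follows since the relevant curves all lie inside $P$). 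Then mimic the proof of Proposition~\ref{prop:amb_tube}: choose a separating simple closed curve $b$ in $T_2$ separating the common defining end $\mu$ from both $\partial T_1$ and $\partial T_1''$, but now additionally chosen to lie in the interior of $P$ relative to the outer boundary — concretely, arrange $b$ so that the homeomorphisms $f, f'$ provided by condition (iii) of Definition~\ref{def:telescope} (which are supported in $T_2$) can be taken supported in $P\cap T_2$, i.e.\ they fix $\partial T_2 \setminus \partial P$... The cleanest route: apply Lemma~\ref{lem:tube_ends}/Theorem~\ref{thm:topology} to identify $P$ with a concrete model $P\,\tilde\times\, I$ for finite $I$, inside which both $P'$ and $P''$ appear as sub-tubes, and where the required homeomorphism is an explicit "shift within finitely many tube-slabs" supported away from $\partial P$. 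This gives $h_0$ supported in $P$ with $h_0(P')=P''$.

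The main obstacle I anticipate is precisely this support control: Proposition~\ref{prop:amb_tube} as stated gives an ambient homeomorphism of all of $M$, and the homeomorphisms furnished by Definition~\ref{def:telescope}(iii) are only guaranteed to be supported in an extension $S$ of the relevant telescope, which a priori is much larger than $P$. Overcoming this is where I would lean hardest on the structural results of Section~\ref{sec:topology}: the point of introducing tubes and proving Theorem~\ref{thm:topology} is exactly that $P \cong P\,\tilde\times\, I$, so that "nestled" sub-tubes of $P$ correspond to sub-intervals of $I$ and can be matched up by homeomorphisms of $P$ that are the identity near $\partial P$. I would therefore prove the key sublemma: \emph{if $P_1, P_2$ are sub-tubes of a tube $P$ each separating one boundary component of $P$ from the other (equivalently, $P_1, P_2 \Subset P$), then there is a homeomorphism of $P$, identity on a neighborhood of $\partial P$, taking $P_1$ to $P_2$} — this follows by writing $P$ as a chain product and noting both $P_1,P_2$ are "one slab's worth" in the chain, then applying the homogeneity of chain products together with Lemma~\ref{lem:tube_ends} to absorb any discrepancy in which maximal ends lie on either side. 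Extending such a homeomorphism of $P$ by the identity on $M\ssm P$ is legitimate precisely because it is the identity near $\partial P$, giving the desired $h\in\Homeo(M)$ supported in $P$ with $h(P')=P''$. This completes the proof.
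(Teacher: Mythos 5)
Your proposal is correct and, once the abandoned detour through common extensions and Definition~\ref{def:telescope}(iii) is stripped away, lands on essentially the paper's argument: the paper's proof is a one-liner observing that, by Proposition~\ref{prop:amb_tube}, the tubes \( P' \) and \( P'' \) are homeomorphic, as are (componentwise) their complements in \( P \) --- these complementary components are tubes precisely because of the nestled hypothesis --- so the pieces can be matched by a homeomorphism of \( P \) that is the identity near \( \partial P \) and then extended by the identity to \( M \). One small correction: your parenthetical claim that \( P_1 \Subset P \) is equivalent to \( P_1 \) separating the two boundary components of \( P \) is false in general, since separation alone does not force the components of \( P \ssm P_1^{\mathrm o} \) to be tubes; this does not affect your argument, as you only ever invoke the genuine nestled hypothesis.
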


\begin{proof}
By Proposition~\ref{prop:amb_tube}, \( P \) and \( P' \) are homeomorphic as are their complements in \( P \), and so the result follows. 
\end{proof}

Now suppose that \( M \) is a telescoping 2-manifold, \( P \subset M \) is a tube, and \( h \in H(M) \).
Let \( Q \) be a tube such that \( P, h(P) \Subset Q \). 
We would like to find a homeomorphism supported in \( Q \) that agrees with \( h \) on \( P \).
By Lemma~\ref{lem:nestled}, there exists \( g \in H(M) \) supported in \( Q \) such that \( (g\circ h)(P) = P \). 
If \( M \) is orientable, then, by editing \( g \) near \( \partial P \) if necessary, we can assume that \( g \circ h \) restricts to the identity on \( \partial P \), which allows us to write \( g\circ h = g_1 \circ g_2 \) with \( g_1 \) supported in \( P \) and \( g_2 \) supported in \( M \ssm {P}^\mathrm o \). 
It follows that \( h \) and \( g^{-1} \circ g_1 \) agree on \( P \) and \( g^{-1} \circ g_1 \) is supported in \( Q \). 

If \( M \) is not orientable, the issue is that \(  g\circ h \) might reverse the orientation of one or both components of \( \partial P \). 
We can remedy this issue with boundary slides:
the notion of a boundary slide is discussed in \cite[Definition~5.5.2]{VlamisHomeomorphism}. 
Briefly, given a two-holed Klein bottle \( K \) and an oriented separating simple closed curve \( b \) in \( K \) that separates the boundary components of \( K \) and that divides \( K \) into two two-holed projective planes, a boundary slide in \( b \) fixes \( \partial K \) pointwise, fixes \( b \) setwise, and reverses the orientation of \( b \). 
Now, given a component of \( \partial P \), there is an embedding of \( K \) into \( Q \) mapping \( b \) onto the given component of \( \partial P \) and that is disjoint from the other component of \( \partial P \). 
Therefore, we can perform a boundary slide supported in \( Q \) that reverses the orientation of a given component of \( \partial P \) while fixing the other component pointwise. 
Post-composing \( g \) with one or two boundary slides appropriately, we can assume that \( g \circ h \) fixes the orientation of each component of \( \partial P \), allowing us to proceed as in the orientable case.
This discussion establishes the following proposition.

\begin{Prop}
\label{prop:local_approx}
Let \( M \) be a telescoping 2-manifold,  let \( P \subset M \) be a tube, and let \( h \in H(M) \). 
If \( Q \) is a tube such that \( P, h(P) \Subset Q \), then there exists \( g \in H(M) \) supported in \( Q \) such that \( g \circ h \) restricts to the identity on \( P \).
\qed 
\end{Prop}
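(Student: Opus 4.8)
The plan is to reduce the claim to a factorization across the separating curve $\partial P$. The first step is to normalize the image of $P$: since $P \Subset Q$ and $h(P) \Subset Q$, both $P$ and $h(P)$ are tubes nestled in the tube $Q$, so Lemma~\ref{lem:nestled} supplies $g_0 \in H(M)$ supported in $Q$ with $g_0(h(P)) = P$. Writing $h_1 := g_0 \circ h$, we have $h_1(P) = P$, and it now suffices to find $g' \in H(M)$ supported in $Q$ with $g' \circ h_1$ equal to the identity on $P$; then $g := g' \circ g_0$ is the desired homeomorphism, since $g \circ h = g' \circ h_1$.

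The second step is to arrange that $h_1$ is the identity on $\partial P$. The restriction of $h_1$ to each of the two circles comprising $\partial P$ is a self-homeomorphism of $\bS^1$. If $M$ is orientable, then $h_1 \in \Homeo^+(M)$, so each such restriction is orientation-preserving, hence isotopic to the identity; since $P \Subset Q$, the curve $\partial P$ has collars on both sides inside $Q$, and we may modify $h_1$ by a homeomorphism supported in an arbitrarily thin such collar so that, afterwards, $h_1$ is the identity on $\partial P$. If $M$ is not orientable (hence infinitely non-orientable), the only obstruction is that $h_1$ may reverse the orientation of one or both components of $\partial P$; this is removed using boundary slides in the sense of \cite[Definition~5.5.2]{VlamisHomeomorphism}: for a prescribed component $c$ of $\partial P$ one embeds a two-holed Klein bottle $K$ into $Q$ carrying the separating curve $b$ of $K$ onto $c$ and disjoint from the other component of $\partial P$, and the associated boundary slide is supported in $K \subset Q$, fixes $\partial K$ pointwise, and reverses the orientation of $c$. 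Post-composing $h_1$ with one such slide for each reversed component returns us to the orientable situation, so in all cases we may assume $h_1$ is the identity on $\partial P$.

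The third step is the factorization and the bookkeeping. Since $h_1$ fixes $\partial P$ and maps $P$ to itself, it also maps $M \ssm P^\mathrm{o}$ to itself, so $h_1 = g_1 \circ g_2$, where $g_1$ is $h_1|_P$ extended by the identity (supported in $P \subset Q$) and $g_2$ is $h_1|_{M \ssm P^\mathrm{o}}$ extended by the identity (supported in $M \ssm P^\mathrm{o}$, hence equal to the identity on $P$). Both $g_1$ and $g_2$ lie in $H(M)$: $g_1$ is supported in a tube, and $g_2$ agrees with $h$ outside $Q$ (all the normalizing homeomorphisms above were supported in $Q$), so it inherits the orientation and maximal-end condition defining $H(M)$ from $h$. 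Setting $g' := g_1^{-1}$, so that $g = g_1^{-1} \circ g_0$ is supported in $Q$, we compute for $x \in P$ that $(g \circ h)(x) = (g' \circ h_1)(x) = g_1^{-1}(h_1(x)) = g_1^{-1}(g_1(g_2(x))) = g_2(x) = x$, which is what we want.

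I expect the main obstacle to be the non-orientable case: exhibiting boundary slides supported in $Q$ that reverse the orientation of a prescribed component of $\partial P$ while fixing the other component of $\partial P$ pointwise. This requires locating a suitable two-holed Klein bottle inside the tube $Q$ that straddles one boundary circle of $P$ and misses the other, which in turn relies on $M$ being infinitely non-orientable and on the structure of tubes established in Section~\ref{sec:topology}. By contrast, the orientable case and the collar and factorization manipulations in the other steps are routine.
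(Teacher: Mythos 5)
Your proof is correct and follows essentially the same route as the paper's: normalize \(h(P)\) back to \(P\) via Lemma~\ref{lem:nestled}, adjust near \(\partial P\) (a collar isotopy in the orientable case, boundary slides in the non-orientable case) so that the composition is the identity on \(\partial P\), and then factor across \(\partial P\) into pieces supported in \(P\) and in \(M \ssm P^\mathrm{o}\). The only cosmetic difference is that the paper keeps the corrections folded into a single homeomorphism \(g\) rather than tracking \(g_0\), \(g_1\), \(g_2\) separately.
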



\section{Fragmentation and uniform perfectness}

On account of Theorem~\ref{thm:vlamis}, for the remainder of the article we restrict ourselves to considering telescoping 2-manifolds with a finite number of maximal ends.

\begin{Def}[Standard union of tubes]
A subset \( A \) of a telescoping 2-manifold is called a \emph{standard union of tubes} if there exist an infinite interval of integers \( J \) and a locally finite family \( \{A_j\}_{j\in J} \) consisting of pairwise-disjoint tubes  such that 
    \begin{itemize}
        \item \( A = \bigcup_{j\in J} A_j \),
        \item \( A_{j+1} \) separates \( A_{j} \) from \( A_{j+2} \) for all \( j \in J \), and 
        \item each component of \( M \ssm A^\mathrm o \) is either a tube or capped tube. 
    \end{itemize}
\end{Def}

We will generally have that \( J = \bn\cup\{0\} \) or \( J = \bz \) depending on whether the 2-manifold has one or two maximal ends, respectively; in particular, we will generally assume \( 0 \in J \). 

\begin{Lem}
\label{lem:fragmentation2}
Let \( M \) be a telescoping 2-manifold.
Let \( \{ A_j \}_{j\in J} \) and \( \{ B_j \}_{j \in J} \) be a locally finite families of pairwise-disjoint tubes in \( M \) such that 
    \begin{itemize}
        \item \( A = \bigcup A_j \) and \( B = \bigcup B_j \) are each a standard union of tubes, 
        \item \( A_j \cap B_k \neq \varnothing \) if and only if \( j \in \{k, k+1\} \), and
        \item each component of \( A \cap B \), \( A \ssm B^\mathrm o \), and \( B \ssm A^\mathrm o \) is a tube.
    \end{itemize}
If \( \alpha, \beta \in H(M) \) such that each component of \( \supp(\alpha) \) and \( \supp(\beta) \) is nestled in a component \( A \) and \( B \), respectively, and such that both \( \alpha \) and \( \beta \) restrict to the identity on \( A_0 \cup B_0 \), then there exists \( \alpha', \beta' \in H(M) \) such that \( \alpha\circ \beta = \alpha'\circ \beta' \) and \( \alpha' \) and \( \beta' \) are each commutators of elements supported in \( A \) and \( B \), respectively.
\end{Lem}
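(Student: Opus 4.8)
The plan is to use the classical ``infinite swindle'' (Mazur/Eilenberg) in the form adapted to commutators, exactly as in the proofs establishing uniform perfectness of homeomorphism groups of spheres and of self-similar surfaces (e.g.\ \cite[Section~5.8]{VlamisHomeomorphism}), but carried out along the two interleaved standard unions of tubes \( A \) and \( B \). First I would decompose \( \alpha \) and \( \beta \) into their ``pieces'': since each component of \( \supp(\alpha) \) is nestled in a component of \( A \), and each component of \( A^{\mathrm o}\setminus(\text{other }A_j) \) lies inside a single \( A_j \), we may write \( \alpha = \prod_{j\in J} \alpha_j \) as a locally finite product with \( \supp(\alpha_j)\subset A_j \); because \( \alpha \) is the identity on \( A_0 \) we may moreover arrange \( \alpha_0 = \mathrm{id} \). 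Similarly \( \beta = \prod_{j\in J}\beta_j \) with \( \supp(\beta_j)\subset B_j \) and \( \beta_0 = \mathrm{id} \). The point of the hypothesis \( A_j\cap B_k\neq\varnothing \iff j\in\{k,k+1\} \) is that the combinatorial adjacency of the two families is that of a ``ladder,'' and the point of the third bullet (all the overlap pieces are tubes) is that, via Proposition~\ref{prop:amb_tube} and Lemma~\ref{lem:nestled}, any tube can be pushed into any disjoint tube by an ambient homeomorphism supported in a slightly larger tube.

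Next I would realize \( \alpha' \) as a single commutator absorbing all of \( \alpha \). Using that \( A \) is a standard union of tubes, choose a ``shift-like'' homeomorphism \( \Phi \) supported in \( A \) that moves the combinatorial data one step: concretely, pick pairwise-disjoint tubes \( A_j' \Subset A_j \) containing \( \supp(\alpha_j) \), and let \( \Phi \) be supported in \( A \) with \( \Phi(A_j')\subset A_{j+1}' \) (or \( A_{j-1}' \)) for all \( j \); such a \( \Phi \) exists because, by Proposition~\ref{prop:amb_tube} and the chain structure, a half-open union \( \bigcup_{j\ge k}A_j \) is itself a maximal telescope, so it admits a self-homeomorphism pushing everything toward its defining end. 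Setting \( \gamma = \prod_{j}\big(\Phi^{j}\circ\alpha_j\circ\Phi^{-j}\big) \cdot(\text{telescoping correction}) \) and computing \( [\gamma,\Phi] \) in the standard way, the infinite product telescopes and yields \( [\gamma,\Phi] = \alpha \) (here the vanishing of the ``\( j=0 \)'' term is what makes the one-sided shift legitimate — this is why we insisted \( \alpha_0 = \mathrm{id} \)). Both \( \gamma \) and \( \Phi \) are supported in \( A \), so \( \alpha' := [\gamma,\Phi] = \alpha \) is a commutator of elements supported in \( A \). Symmetrically one produces \( \beta' := [\delta,\Psi] = \beta \) with \( \delta,\Psi \) supported in \( B \).

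Finally, taking \( \alpha' \) and \( \beta' \) exactly as above gives \( \alpha\circ\beta = \alpha'\circ\beta' \) on the nose, with \( \alpha' \) a commutator of elements supported in \( A \) and \( \beta' \) a commutator of elements supported in \( B \), as required. (One could instead, if preferred, absorb \( \beta \) into \( \alpha' \) and leave \( \beta' = \mathrm{id} \); the statement as phrased is slightly weaker and follows.) The main obstacle I anticipate is the bookkeeping at the ``seam'' where the supports of the building blocks of \( \Phi \) must be fattened to honest tubes that are simultaneously nestled in the \( A_j \) and large enough to carry \( \Phi^{j}\circ\alpha_j\circ\Phi^{-j} \): one needs the tubes \( A_j' \) and the shift \( \Phi \) chosen coherently so that local finiteness of the infinite product \( \gamma \) is preserved and so that \( \Phi \) is genuinely supported in \( A \) rather than spilling into a component of \( M\setminus A^{\mathrm o} \). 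The hypothesis that \( A \) is a \emph{standard} union of tubes (each complementary component a tube or capped tube, and the \( A_j \) forming a chain) is precisely what is needed to build \( \Phi \), and the fact that \( \supp(\alpha) \) is \emph{nestled} in \( A \) (not merely contained in it) gives the room to choose the \( A_j' \). The orientation subtleties at the boundaries of the relevant tubes are handled exactly as in Proposition~\ref{prop:local_approx}, via boundary slides in the non-orientable case.
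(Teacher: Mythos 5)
There is a genuine gap, and it sits at the heart of your construction of \( \Phi \). The tubes \( A_j \) are pairwise disjoint and closed, so they are precisely the connected components of \( A \). Any homeomorphism supported in \( A \) is the identity on \( M \ssm A^{\mathrm o} \), hence fixes each \( \partial A_j \) pointwise and therefore maps each \( A_j \) onto itself. Consequently a ``shift-like'' homeomorphism \( \Phi \) supported in \( A \) with \( \Phi(A_j')\subset A_{j+1}' \) cannot exist: \( \Phi(A_j')\subset\Phi(A_j)=A_j \), which is disjoint from \( A_{j+1} \). (Relatedly, \( \bigcup_{j\ge k}A_j \) is not a maximal telescope --- it is disconnected; the maximal telescope is that union \emph{together with} the complementary tubes separating consecutive \( A_j \), which are not contained in \( A \).) The swindle you describe is exactly the argument of Lemma~\ref{lem:commutator}, but there the shift is supported in a connected maximal telescope \( T_+ \), not in the disconnected set \( A \); that version cannot produce a commutator of elements supported in \( A \), which is what this lemma demands (and what the strong-distortion argument later consumes). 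Indeed, it is not true in general that \( \alpha \) itself is a commutator of elements supported in \( A \): within a single \( A_j \) there is no room to displace \( \alpha_j \) off itself and absorb it, so your proposed \( \alpha'=\alpha \), \( \beta'=\beta \) is too strong a conclusion to expect.

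The role of the interleaved family \( B \) --- which your proposal treats as mere bookkeeping --- is precisely to repair this. The paper's proof works one index at a time: it replaces \( \alpha_j \) by \( \alpha_j\circ\gamma_j^{-1} \), where \( \gamma_j \) is a conjugate of \( \alpha_j \) (times an incoming correction) with support pushed into the overlap \( A_j\cap B_j \); this product \emph{is} a commutator of elements supported in \( A_j \), since \( g\cdot(sgs^{-1})^{-1}=[g,s] \). The discarded piece \( \gamma_j \) lives in \( B_j \), so it is handed to the \( \beta \) side, where the same trick pushes the resulting error into \( B_j\cap A_{j+1} \) and hands it back to the \( \alpha \) side at index \( j+1 \). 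The corrections thus propagate outward along the ladder forever (this is where the hypothesis that \( \alpha,\beta \) are trivial on \( A_0\cup B_0 \) and the local finiteness enter), and the resulting \( \alpha'=\prod\alpha_j' \) and \( \beta'=\prod\beta_j' \) are single commutators because the factors have pairwise disjoint supports. Your proposal is missing this transfer mechanism entirely, and without it the statement does not follow.
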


The proof of Lemma~\ref{lem:fragmentation2} is motivated by an argument of Le Roux--Mann \cite{LeRouxPrivate} correcting an oversight in the proof of \cite[Theorem~4.1]{LeRouxStrong}.

\begin{proof}[Proof of Lemma~\ref{lem:fragmentation2}]
The key observation, used repeatedly in the proof, is as follows: given an element \( h \in H(M) \) whose support is nestled in a tube \( P \)  and given a tube \( P' \) nestled in \( P \),  there exists a homeomorphism \( \sigma \) supported in \( P \) such that \( \sigma \circ h \circ \sigma^{-1} \) is supported in \( P' \). 
Note that this observation is an immediate consequence of Lemma~\ref{lem:nestled}. 

For \( j \in J \), let \( G_j \) and \( H_j \) be the subgroups of \( H(M) \) consisting of homeomorphisms of \( M \) supported in \( A_j \) and \( B_j \), respectively. 
Applying the observation above, we have the following: given \( g \in G_j \) (resp., \( h \in H_j \)) with support contained in a tube nestled in \( A_j \) (resp., \( B_j \)), there exists a conjugate of \( g \) (resp., \( h \)) with support nestled in \( A_j \cap B_j \) (resp., \( B_j \cap A_{j+1}) \).

We now proceed recursively editing \( \alpha \) and \( \beta \) to obtain the desired \( \alpha' \) and \( \beta' \). 
Let \( T \) and \( T' \) denote the closures of the complementary components of \( M \ssm (A_0 \cup B_0) \). 
At least one of \( T \) and \( T' \) is a maximal telescope; we assume that \( T \) is a maximal telescope, and we will recursively edit \( \alpha \) and \( \beta \) in \( T \). 
If \( T' \) is also a maximal telescope, then the same process can be followed independently in \( T' \); otherwise, by the hypotheses, both \( \alpha \) and \( \beta \) restrict to the identity on \( T' \) as it is disjoint from \( A \cup B \). 

Let \( \alpha_j \) (resp., \( \beta_j \)) be the homeomorphism that agrees with \( \alpha \) (resp., \( \beta \)) on \( A_j \) (resp., \( B_j \)) and that restricts to the identity elsewhere. 
Let \( \gamma_1 \) be a conjugate of \( \alpha_1 \) with support nestled in \( A_1 \cap B_1 \), and set \[ \alpha_1' = \alpha_1 \circ \gamma_1^{-1}. \]
Note that since \( \gamma_1 \) is a conjugate of \( \alpha_1 \) in \( G_1 \), we have that \( \alpha_1' \) is a commutator of elements in \( G_1 \). 
Now, \( \gamma_1\circ \beta_1 \) has support nestled in \( B_1 \), so we may choose a conjugate \( \delta_1 \) of \( \gamma_1 \circ \beta_1 \) whose support is nestled in \( B_1 \cap A_2 \); set \[ \beta_1' = 
\delta_1^{-1}\circ\gamma_1\circ \beta_1. \]
Note that \( \beta_1' \) is a commutator of elements in \( H_1 \).

For the next step, let \( \gamma_2 \) be a conjugate of \( \alpha_2 \circ \delta_1 \) whose support is nestled in \( A_2 \cap B_2 \), and set \[ \alpha_2' = \alpha_2 \circ \delta_1\circ  \gamma_2^{-1}, \] which is a commutator in \( G_2 \). 
Let \( \delta_2 \) be a conjugate of \( \gamma_2 \circ \beta_2 \) whose support is nestled in \( B_2 \cap A_3 \), and set \[ \beta_2' = \delta_2^{-1}\circ \gamma_2 \circ \beta_2,  \] which is a commutator in \( H_2 \). 
Continuing in this fashion, we obtain \( \gamma_j \) and \( \delta_j \)  with supports nestled in \( A_j \cap B_j \) and \( B_j \cap A_{j+1} \), respectively, and such that  \( \alpha_j' := \alpha_j \circ \delta_{j-1} \circ \gamma_j^{-1} \) and \( \beta_j' := \delta_j^{-1} \circ \gamma_j \circ \beta_j \) are commutators in \( G_j \) and \( H_j \), respectively, for each \( j \in \bn \). 

Set  \( \alpha' = \prod \alpha_j' \) and \( \beta' = \prod \beta_j' \).
As we can write \( \alpha_j' = [f_j, g_j] \) with \( f_j \) and \( g_j \) supported in \( A_j \), we can write
\begin{align*}
	\alpha' 	&= \prod \alpha_j' \\
		 	&= \prod [f_j, g_j] \\
			&= \left[\prod f_j, \prod g_j \right]
\end{align*}
where the last equality uses that the disjointness of the \( A_j \). 
Therefore, \( \alpha' \) is a commutator of elements supported in \( A \).
A similar argument shows that \( \beta' \) is a commutator of elements supported in \( B \). 

To finish, we claim \( \alpha\circ\beta|_T = \alpha'\circ\beta'|_T \). 
Let us show this equality pointwise. 
First observe that if \( x\in T \) is not an element of \( A_j \cup B_j \) for \( j > 0 \), then \( \alpha'\beta'(x) = x=\alpha\beta(x) \), so we may assume that \( x \in A_j \cup B_j  \) for some \( j > 0 \).
First assume that \( x \in B_j \).
Then, as \( \beta(x) = \beta_j(x) \), one of the following holds: \( \beta(x) \in A_j \), \( \beta(x) \in A_{j+1} \), or \( \beta(x) \in B_j \ssm A \). 
In the first case, we have 
\begin{align*}
(\alpha'\circ\beta')(x)	&= (\alpha_j'\circ \beta_j')(x) \\
				&= ([\alpha_j \circ \delta_{j-1}\circ \gamma_j^{-1}]\circ[\delta_j^{-1}\circ \gamma_j \circ \beta_j])(x) \\	
				&= (\alpha_j \circ \delta_{j-1}\circ\delta_j^{-1}\circ \beta_j)(x)\\
				&= (\alpha_j\circ \beta_j)(x)\\
				&= (\alpha\circ\beta)(x),
\end{align*}
where the third equality uses the fact that \( \gamma_j \) commutes with \( \delta_j \) (on account of them having disjoint supports),  the fourth equality uses the fact that \( \beta_j(x) \in A_j \cap B_j \) (and hence is in neither the support of \( \delta_j \) nor \( \delta_{j-1} \)), and the final equality uses the assumption that \( \beta(x) \in A_j \).
A similar argument handles the second case, and the third follows from the fact that \[ (\alpha'\circ\beta')|_{B \ssm A} = \beta'|_{B\ssm A} = \beta|_{B \ssm A} = (\alpha\circ\beta)|_{B\ssm A} .\]
Therefore, \( \alpha\circ\beta \) and \( \alpha'\circ\beta' \) agree on \( B \). 
The last possibility is that \( x\in A \ssm B \).
In this case, we have 
\[ (\alpha'\circ\beta')|_{A \ssm B} = \alpha'|_{A\ssm B} = \alpha|_{A \ssm B} = (\alpha\circ\beta)|_{A\ssm B} .\]
Therefore, \( \alpha'\circ\beta'|_T = \alpha\circ\beta|_T \), as desired. 
As already noted, if \( T' \) is also a maximal telescope, we repeat the above steps, independently, in \( T' \) to finish the proof. 
\end{proof}

The proof of the following proposition uses the proof of \cite[Lemma~2.2]{LeRouxStrong} as an outline.

\begin{Prop}
\label{prop:fragmentation}
    Let \( M \) be a telescoping 2-manifold with a finite number of maximal ends.
    Given a sequence \( \{h_n\}_{n\in\bn} \) in \( H(M) \), there exist standard unions of tubes  \( A \) and \( B \) in \( M \) and sequences \( \{\alpha_n\}_{n\in\bn} \), \( \{ \beta_n \}_{n\in\bn} \), and \( \{ \gamma_n \}_{n\in\bn} \) in \( H(M) \) such that \( \supp(\alpha_n) \subset A \), \( \supp(\beta_n) \subset B \), \( \gamma_n \) is supported in a tube, and \(  h_n = \alpha_n \circ \beta_n \circ \gamma_n \).
    Moreover, 
        \begin{itemize}
            \item each component of \( A \cap B \), \( A \ssm B^\mathrm o \), and \( B \ssm A^\mathrm o \) is a tube,
            \item each component of \( \supp(\alpha_n) \) is nestled in a component of \( A \),
            \item each component of \( \supp(\beta_n) \) is nestled in a component of \( B \), 
            \item \( \alpha_n \) and \( \beta_n \) restrict to the identity on \( A_0 \cup B_0 \),
            \item the support of \( \gamma_n \) is nestled in  a component of \( M \ssm B^\mathrm o \), and
            \item \( \alpha_n \) and \( \beta_n \) can each be expressed as a  commutator of homeomorphisms supported in \( A \) and \( B \), respectively.
        \end{itemize}
      	Lastly, there exists locally finite families \( \{ A_j \}_{j\in J} \) and \( \{ B_j \}_{j\in J} \), each consisting of pairwise disjoint tubes,  such that \( A = \bigcup A_j  \), \( B = \bigcup B_j  \), and \( B_j \cap A_k \neq \varnothing \) if and only if \( k \in \{j, j+1\} \).
\end{Prop}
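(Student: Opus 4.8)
The plan is to produce the two standard unions of tubes $A$ and $B$ first, arranging them so that consecutive $A$-tubes and $B$-tubes interleave, and then to fragment each $h_n$ across them. The construction of $A$ and $B$ will be done once, uniformly for the whole sequence, by a diagonal/exhaustion argument: using Theorem~\ref{thm:topology} and the corollary on tubular decompositions, fix a tubular decomposition $\{P_j\}_{j\in J}$ of $M$ with $J=\bn\cup\{0\}$ or $J=\bz$. Since the family $\{\supp(h_n)\}$ need not be locally finite, we cannot hope to bound each $\supp(h_n)$ inside a fixed tube; instead we will choose a rapidly growing sequence of finite sub-unions $Q_0 \Subset Q_1 \Subset Q_2 \Subset \cdots$ of the tubular decomposition, each $Q_i$ itself a tube by the corollary to Theorem~\ref{thm:topology}, such that $\supp(h_n)$ is nestled in $Q_{k(n)}$ for a suitable index $k(n)$ (this uses only that each $\supp(h_n)$ is closed, hence meets only finitely many $P_j$ after possibly conjugating — but more carefully, since $M$ is weakly self-similar (Lemma~\ref{lem:weakly_ss}) and maximal telescopes form a neighborhood basis of each defining end (Lemma~\ref{lem:deep_lens}), we may replace $h_n$ by a conjugate to arrange its support away from $P_0$; the point is that a single $h_n$ does have compact-support-like behaviour only in the finite-end case insofar as we can push it into a half-manifold, and we exploit that to set up the nestling).

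\textbf{Defining $A$ and $B$.} Following the outline of \cite[Lemma~2.2]{LeRouxStrong}, set $A_j$ and $B_j$ to be appropriately chosen consecutive blocks of the tubular decomposition, shifted by one so that $B_j \cap A_k \neq \varnothing$ exactly when $k \in \{j, j+1\}$; concretely, if the tubular decomposition is $\{P_\ell\}$, let $A_j$ be the tube $P_{2j}\cup P_{2j+1}$-type block and $B_j$ the block straddling the shared boundary, each a tube by the corollary to Theorem~\ref{thm:topology}. One checks directly from the definitions that $A=\bigcup A_j$ and $B=\bigcup B_j$ are standard unions of tubes, that each component of $A\cap B$, $A\smallsetminus B^{\mathrm o}$, $B\smallsetminus A^{\mathrm o}$ is a tube (these are again finite sub-unions of the decomposition), and that the local finiteness and the intersection pattern hold. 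We also ensure $0\in J$ and that $A_0\cup B_0$ is a fixed compact region near $P_0$.

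\textbf{Fragmenting $h_n$.} For each $n$, we write $h_n = \alpha_n \circ \beta_n \circ \gamma_n$ as follows. Since the support of $h_n$ is (after the nestling set-up above) contained in a tube $Q$ with $Q$ a finite sub-union of the decomposition, we use the decomposition $A\cap B$, $A\smallsetminus B^{\mathrm o}$, $B\smallsetminus A^{\mathrm o}$ together with Proposition~\ref{prop:local_approx} to successively peel off, tube by tube, the part of $h_n$ living in each $A_j$ versus each $B_j$: starting from the end farthest from $P_0$, use Proposition~\ref{prop:local_approx} to replace $h_n$ by a product of a homeomorphism supported in a tube nestled in some $B_j$ and a homeomorphism that is the identity on that tube, iterate inward, and collect the $B$-pieces into $\beta_n$ and the $A$-pieces into $\alpha_n$, leaving a residual term $\gamma_n$ supported in a single tube (a component of $M\smallsetminus B^{\mathrm o}$, which is a tube or capped tube). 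Arrange by the choice of $A_0,B_0$ that $\alpha_n$ and $\beta_n$ are the identity there. The "commutator" clause is then exactly the content of Lemma~\ref{lem:fragmentation2}: applying it with $\alpha=\alpha_n$, $\beta=\beta_n$ (whose supports are nestled in components of $A$ and $B$ and which fix $A_0\cup B_0$), we rewrite $\alpha_n\circ\beta_n$ as $\alpha_n'\circ\beta_n'$ with each factor a single commutator of homeomorphisms supported in $A$, resp.\ $B$; replacing $\alpha_n,\beta_n$ by $\alpha_n',\beta_n'$ and absorbing the discrepancy into $\gamma_n$ (note $\alpha_n'\circ\beta_n'$ agrees with $\alpha_n\circ\beta_n$ off a compact tube) completes the list of properties.

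\textbf{Main obstacle.} The delicate point is the uniform choice of $A$ and $B$ together with the nestling of $\supp(h_n)$: because $\{\supp(h_n)\}_{n\in\bn}$ is an arbitrary sequence and need not be locally finite, one cannot simultaneously nestle all the supports in a locally finite block structure. The resolution is that the block decomposition $\{A_j\},\{B_j\}$ is fixed once and for all and is locally finite, while for each individual $n$ the support $\supp(h_n)$ is handled by the peeling argument which only requires, at each stage, a single application of Proposition~\ref{prop:local_approx} on a tube containing the relevant piece; local finiteness of $\{A_j\}$ (not of the supports) is what makes the infinite products $\prod f_j$, $\prod g_j$ in Lemma~\ref{lem:fragmentation2} well-defined. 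Getting the bookkeeping of "which piece of $h_n$ goes into $A_j$ versus $B_j$" consistent with the intersection pattern $B_j\cap A_k\neq\varnothing \iff k\in\{j,j+1\}$ is the part that needs care, and it is precisely mirrored on the geometric side by the overlap structure of the tubular decomposition.
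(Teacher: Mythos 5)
There is a genuine gap, and it is at the very first step: you assume that each \( \supp(h_n) \) can be nestled in a tube \( Q_{k(n)} \), i.e.\ that it is (essentially) compact. This is false in general: the \( h_n \) are arbitrary elements of \( H(M) \), and a typical element --- for instance a ``translation'' of \( P \tilde\times \bz \) --- has support equal to all of \( M \). A closed subset of \( M \) certainly need not meet only finitely many tubes of a tubular decomposition, and neither conjugation nor weak self-similarity (which only displaces \emph{compact} sets) can repair this. Your subsequent ``peeling'' argument, which iterates ``inward, starting from the end farthest from \( P_0 \),'' presupposes exactly this compactness (otherwise there is no farthest piece to start from), so the whole fragmentation mechanism collapses. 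Note also that if the supports really were compact, the proposition would be nearly vacuous: \( \gamma_n \) alone, supported in a tube, would already do the job.

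The paper's proof handles non-compact supports by making \( A \) depend on the sequence \( \{h_n\} \), not by fixing it in advance. Concretely, with \( I_k = \bigcup_{j=-k}^{k}P_j \), one recursively chooses indices \( i_n < k_n < j_n \) growing fast enough that \( h_\ell(P_{\pm k_n}) \subset \bigcup_{\ell=i_n}^{j_n} P_\ell =: A_{\pm n} \) for all \( \ell \le n \); thus each thin separating tube \( P_{\pm k_n} \) \emph{and all of its relevant images} are nestled in the single block \( A_{\pm n} \). Proposition~\ref{prop:local_approx} is then used not to peel off pieces of \( h_\ell \), but to produce \( \alpha_\ell \) supported in \( A \) that \emph{agrees with \( h_\ell \) on the separating tubes} \( P_{\pm k_n} \), \( n \ge \ell \). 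The point is that \( \alpha_\ell^{-1}\circ h_\ell \) then restricts to the identity on each \( P_{\pm k_n} \) and therefore splits as a product of homeomorphisms supported between consecutive separating tubes (giving \( \beta_\ell \), supported in \( B := \overline{M \ssm \bigcup(P_{-k_n}\cup P_{k_n})} \)) times a compactly supported remainder \( \gamma_\ell \). No control on \( \supp(h_\ell) \) is ever needed --- only control on the images of finitely many compact tubes under finitely many of the \( h_\ell \), which is what the diagonal choice of \( k_n, j_n \) provides. Your appeal to Lemma~\ref{lem:fragmentation2} at the end is the right final step (and note it gives exact equality \( \alpha_n\circ\beta_n = \alpha_n'\circ\beta_n' \), so there is no discrepancy to absorb into \( \gamma_n \)), but the argument needs the adaptive construction of \( A \) and the matching-on-separating-tubes use of Proposition~\ref{prop:local_approx} to get there.
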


\begin{proof}
Let us assume that \( M \) has two maximal ends; the other case is similar. 
Let \( \{P_j\}_{j\in\bz} \) be a tubular decomposition of \( M \). 
For \( k \in \bz \), let \( I_k = \bigcup_{j = -k}^k P_j \), let \( I_k^+ \) denote the component \( M \ssm I_k \) intersecting \( P_{k+1} \) nontrivially, and similarly, let \( I_k^- \) denote the component of \( M \ssm I_k \) intersecting \( P_{-k-1} \) nontrivially.

We will construct \( A \) recursively.
To do so, we recursively construct a sequence of three natural numbers \( i_n \), \( j_n \), and \( k_n \). 
Set \( j_0 = 0 \). 
For \( n \in \bn \), recursively define \( i_n \), \( j_n \), and \( k_n \) as follows:
Set \( i_n = j_{n-1}+2 \).
Next, define \( k_n \in \bn \) by
    \[
    	k_n=\min\left\{ k \in \bn :  h_\ell^{-1}(I_{i_n}) \subset I_k \text{ for } \ell\leq n \right\}+1,
    \]
so as to guarantee that  \( h_\ell(P_{k_n}) \subset I_{i_n}^+ \) and \( h_\ell(P_{-k_n}) \subset I_{i_n}^- \) for each \( \ell \leq n \). 
Finally, define \( j_n \) by
    \[
    	j_n = \min\left\{ j \in \bn : h_\ell(I_{k_n}) \subset I_j \text{ for } \ell\leq n\right\}+1,
    \]
so as to guarantee that \( h_\ell(P_{k_n}) \subset \bigcup_{\ell = i_n}^{j_n} P_\ell \) and \( h_\ell(P_{-k_n}) \subset \bigcup_{\ell = -j_n}^{-i_n} P_\ell \) for \( \ell \leq n \) (see Figure~\ref{fig:factor}).

\begin{figure}[t]
\centering
\includegraphics{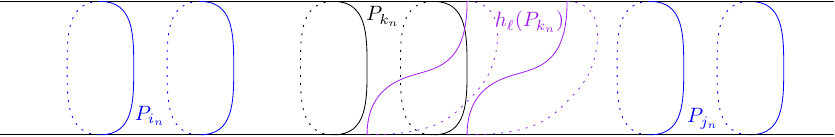}
\caption{A schematic of the choice of \( i_n \), \( j_n \), and \( k_n \).}
\label{fig:factor}
\end{figure}

Now, we define \( A \).
Let \( A_0 \) be a tube satisfying \( P_0 \Subset A_0 \Subset   I_{k_1-1} \), and for \( n \in \bn \), set 
    \[
        A_n =  \bigcup_{\ell = i_n}^{j_n} P_\ell \, \text{  and  } \, A_{-n} = \bigcup_{\ell = -j_n}^{-i_n} P_\ell,
    \]
so \( A_n \) and \( A_{-n} \) are tubes. 
Then,  for \( n \in \bn \), we have that \( P_{\pm k_n}, h_\ell(P_{\pm k_n}) \Subset A_{\pm n} \) for all \( \ell \leq n \).
Now, set \( A = \bigcup_{n\in\bz} A_n \), so that \( A \) is a standard union of tubes. 

Next, fix \( \ell \in \bn \). 
Given the setup above, for \( n \geq \ell \), we can apply Proposition~\ref{prop:local_approx}, to find an ambient homeomorphism of \( M \) supported in \( A_{-n} \cup A_n  \) that agrees with \( h_\ell \) on \( P_{-k_n} \cup P_{k_n}  \). 
It follows that there exists an ambient homeomorphism \( \alpha_\ell \) of \( M \) \ such that \( \alpha_\ell \) agrees with \( h_\ell \) on \( \bigcup_{n \geq \ell } (P_{-k_n} \cup P_{k_n}) \) and is supported in \( \bigcup_{n\geq \ell} (A_n \cup A_{-n} ) \subset A \). 

Now, \( \alpha_\ell^{-1} \circ h_\ell \) restricts to the identity on \( P_{\pm k_m} \) for \( m \geq \ell \), and therefore, \( \alpha_\ell^{-1} \circ h_\ell \) can be factored as \( \beta_\ell \circ \gamma_\ell \), where \( \gamma_\ell \) is supported in the tube \( I_{k_\ell-1} \) and \( \beta_\ell \) is supported in the complement of \( I_{k_\ell}^\mathrm o \). 
To finish, set \( B \) to be the closure of \( M \ssm \bigcup_{n\in\bn} (P_{-k_n} \cup P_{k_n}) \) and, for \( n \in \bz \), let \( B_n \) denote the component of \( B \)  sharing boundary with \( P_{k_{n}} \) and \( P_{k_{n+1}} \), where we let \( k_0 = 0 \).
We then have that \( B \) is a standard union of tubes, \( \beta_\ell \) is supported in \( \bigcup_{n\geq \ell} (B_n\cup B_{-(n+1)}) \subset B \), and \( A_m \cap B_n \neq \varnothing \) if and only if \( n \in \{m, m+1\} \).  
Hence, the factorization \( h_\ell = \alpha_\ell \circ \beta_\ell \circ \gamma_\ell \) is as desired. 
To finish, we may apply Lemma~\ref{lem:fragmentation2} to edit \( \alpha_\ell \) and \( \beta_\ell \) so that they can be realized as commutators of elements supported in \( A \) and \( B \), respectively. 
\end{proof}

We now turn to establishing the uniform perfectness of \( H(M) \). 
The full strength of Proposition~\ref{prop:fragmentation} is not necessary for this task; therefore, as a corollary, we give a weaker version that suits the purpose at hand. 
In the notation of the proof of Proposition~\ref{prop:fragmentation}, the support of \( \gamma_1 \) is nestled in \( I_{k_1} \), and so replacing \( A \) with \( A \cup I_{k_1} \), replacing \( \alpha_1 \) with \( \alpha_1 \circ \gamma_1 \), and ignoring \( h_n \) for \( n > 1 \), we have the following corollary. 

\begin{Cor}
\label{cor:simple_fragmentation}
Let \( M \) be a telescoping 2-manifold with a finite number of maximal ends.
Every element of \( H(M) \) can be factored as a product of two homeomorphisms each of which is supported in a standard union of tubes. 
\qed
\end{Cor}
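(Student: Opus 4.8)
The plan is to derive Corollary~\ref{cor:simple_fragmentation} directly from Proposition~\ref{prop:fragmentation} by applying it to a particularly simple input sequence and then absorbing the leftover ``tube piece'' into one of the two standard unions of tubes. Concretely, given an arbitrary \( h \in H(M) \), I would apply Proposition~\ref{prop:fragmentation} to the constant sequence \( \{h_n\}_{n\in\bn} \) with \( h_1 = h \) (the values \( h_n \) for \( n > 1 \) are irrelevant, so one may take them all equal to \( h \) as well). This produces standard unions of tubes \( A \) and \( B \), together with the factorization \( h = h_1 = \alpha_1 \circ \beta_1 \circ \gamma_1 \), where \( \supp(\alpha_1) \subset A \), \( \supp(\beta_1) \subset B \), and \( \gamma_1 \) is supported in a tube.

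The only obstacle is that a priori we have \emph{three} factors, not two, so the remaining work is to merge \( \gamma_1 \) into \( \alpha_1 \) (or \( \beta_1 \)) at the cost of enlarging the corresponding standard union of tubes. Here I would invoke the specifics of the construction in the proof of Proposition~\ref{prop:fragmentation}: \( \gamma_1 \) is supported in the tube \( I_{k_1 - 1} \) (in the two-maximal-ends notation; in the one-maximal-end case it is supported in the analogous innermost capped-tube-free region), and this tube is nestled in \( I_{k_1} \), which in turn meets \( A \) only in \( A_0 \). Replacing \( A \) by \( A \cup I_{k_1} \) keeps it a standard union of tubes — the effect is merely to enlarge the central member \( A_0 \) to a still-larger tube \( A_0' \) with \( A_0, I_{k_1} \subset (A_0')^\mathrm{o} \), which is harmless since \( A_0 \cup I_{k_1} \) is again a tube by Proposition~\ref{prop:amb_tube} and the local structure of tubular decompositions, and the separation conditions for a standard union of tubes are preserved. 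Then set \( \alpha_1' = \alpha_1 \circ \gamma_1 \), whose support lies in \( A \cup I_{k_1} \), and observe \( h = \alpha_1' \circ \beta_1 \) is the desired factorization into two homeomorphisms, each supported in a standard union of tubes.

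I expect the main (and only mildly delicate) point to be the bookkeeping that \( A \cup I_{k_1} \) is genuinely a standard union of tubes after the replacement — i.e.\ that gluing the finite block \( I_{k_1} \) of the tubular decomposition onto \( A_0 \) yields a tube and does not disturb the ``\( A_{j+1} \) separates \( A_j \) from \( A_{j+2} \)'' condition or the condition that every component of the complement is a tube or capped tube. This is immediate from the corollary to Theorem~\ref{thm:topology} (a finite union of consecutive members of a tubular decomposition is a tube) together with the fact that \( A_0 \) was chosen nestled inside \( I_{k_1 - 1} \subset I_{k_1} \) in the first place, so absorbing \( I_{k_1} \) only makes the central tube larger while leaving \( A_n \) for \( |n| \geq 1 \) untouched. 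Given the parenthetical remark already supplied in the excerpt — ``the support of \( \gamma_1 \) is nestled in \( I_{k_1} \), and so replacing \( A \) with \( A \cup I_{k_1} \), replacing \( \alpha_1 \) with \( \alpha_1 \circ \gamma_1 \), and ignoring \( h_n \) for \( n>1 \)'' — the proof is essentially a one-paragraph unwinding of that remark, so I would simply state the reduction and cite Proposition~\ref{prop:fragmentation} and the Theorem~\ref{thm:topology} corollary for the tube-gluing claim.
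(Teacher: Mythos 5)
Your proposal is correct and is essentially identical to the paper's own argument, which likewise applies Proposition~\ref{prop:fragmentation}, absorbs \( \gamma_1 \) by replacing \( A \) with \( A \cup I_{k_1} \) and \( \alpha_1 \) with \( \alpha_1 \circ \gamma_1 \), and ignores \( h_n \) for \( n > 1 \). (The only quibble is that \( I_{k_1} \) may also overlap \( A_{\pm 1} \), not just \( A_0 \), but this is harmless since a finite union of consecutive tubes in a tubular decomposition is again a tube.)
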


\begin{Lem}
\label{lem:commutator}
Let \( M \) be a telescoping 2-manifold with a finite number of maximal ends. 
Any element of \( H(M) \) supported in a standard union of tubes can be expressed as a commutator of two elements in \( H(M) \). 
\end{Lem}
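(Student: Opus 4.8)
The plan is to use the standard ``infinite swindle'' (Eilenberg/Mazur) adapted to this setting. Write the given homeomorphism $h \in H(M)$ as $h = \prod_{j\in J} h_j$, where $h_j$ is supported in the tube $A_j$ of the standard union of tubes $A = \bigcup_{j\in J} A_j$; the local finiteness of $\{A_j\}$ makes this infinite product well-defined. The idea is to find a single homeomorphism $g$ supported in $A$ with $g(A_j) \Subset A_{j+1}$ (shifting tubes toward a maximal end) so that the conjugate $g^n h g^{-n}$ has its $j$-th piece ``pushed'' $n$ steps down the chain. Then one exhibits $h$ as a commutator $[\,a,b\,]$ where $a$ is an appropriate infinite product of translates of $h$ and $b$ is a shift-type map, using the telescoping structure to absorb the tail.

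\textbf{Key steps, in order.} First I would set up notation: fix a tubular decomposition refining $A$ and, using Theorem~\ref{thm:topology}(3) (or (4) in the one-end case), identify $A$ with a chain product so there is a ``shift'' homeomorphism $s$ supported in a slightly larger standard union of tubes with $s(A_j) \Subset A_{j+1}$ for all $j$ (with $A_0$ or the capped end absorbing the overflow); Proposition~\ref{prop:amb_tube} and Lemma~\ref{lem:nestled} guarantee such an $s$ exists. Second, for each $j$ let $h_j'$ be a conjugate of $h_j$ whose support is nestled in $A_j$ (possible by the key observation used in Lemma~\ref{lem:fragmentation2}); this lets us form the infinite products $H_k = \prod_{j} s^{k}\!\cdot(\text{translate of }h_j)\cdot s^{-k}$ with locally finite support inside $A$. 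Third, following the classical commutator trick, define $a = \prod_{k\ge 0}(\text{suitable translates of the }h_j\text{'s placed in disjoint sub-tubes})$ and take $b$ to be built from $s$, arranged so that $b\,a\,b^{-1}$ equals $a$ with the first ``block'' deleted; then $[b,a] = a (b a^{-1} b^{-1})$ telescopes to the single block, which we arrange to be $h$ itself. Concretely, I would split $A$ into two interleaved standard unions of tubes (as in Corollary~\ref{cor:simple_fragmentation}/Lemma~\ref{lem:fragmentation2}), carry the swindle out in one while using the other for the shifting room, and verify the pointwise identity $h = [a,b]$ tube by tube exactly as in the pointwise check at the end of Lemma~\ref{lem:fragmentation2}'s proof.

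\textbf{Main obstacle.} The delicate point is making the shift $s$ genuinely available: one needs room at the ``far'' end (toward a maximal end) to push infinitely many tube-translates without collision, and in the one-maximal-end case the capped tube $A_0$ must absorb all the incoming material while the map stays in $H(M)$ (orientation-preserving, fixing the maximal end). Handling the non-orientable case requires, as in the discussion preceding Proposition~\ref{prop:local_approx}, correcting boundary orientations with boundary slides so that the translates glue up to a genuine homeomorphism. I expect the bulk of the work to be bookkeeping: choosing the nestled sub-tubes so that all the infinite products have locally finite support, and checking that $a$ and $b$ lie in $H(M)$ (not merely $\Homeo(M)$) — the latter follows because both are supported away from a neighborhood of the maximal ends, or can be taken so. Once the swindle is set up correctly, the identity $h = [a,b]$ is a formal computation of the same flavor as the one already done in Lemma~\ref{lem:fragmentation2}.
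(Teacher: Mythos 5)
Your overall strategy---the infinite commutator swindle driven by a shift of tubes---is the same as the paper's, and much of your auxiliary machinery is unnecessary: Theorem~\ref{thm:topology} supplies a homeomorphism \( \sigma_+ \) supported in a maximal telescope with \( \sigma_+(A_j) = A_{j+1} \) \emph{exactly} (no nestling, no interleaved copies of \( A \), no boundary-slide corrections needed, since \( \sigma_+ \) is built directly from the chain-product structure and automatically lies in \( H(M) \)). One then sets \( \vp_+ = \prod_{n\geq 0} \sigma_+^n \circ h_+ \circ \sigma_+^{-n} \) and reads off \( h_+ = [\vp_+, \sigma_+] \) from \( \vp_+ = h_+ \circ (\sigma_+ \vp_+ \sigma_+^{-1}) \); no tube-by-tube verification in the style of Lemma~\ref{lem:fragmentation2} is required.

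The genuine gap is the two-ended case, where the index set is \( J = \bz \). A single shift \( g \) with \( g(A_j) \Subset A_{j+1} \) for all \( j \in \bz \) does not make the swindle converge: \( g^k(A_{-k}) \) lands inside \( A_0 \) for every \( k \), so the supports of the conjugates \( g^k h g^{-k} \) all meet \( A_0 \) and the family \( \{\supp(g^k h g^{-k})\}_{k \geq 0} \) is not locally finite; the infinite product you call \( a \) is therefore undefined. Your interleaving device gives ``room'' transverse to the chain but does not cure this accumulation, which happens along the chain. The missing step---and the one genuinely structural move in the paper's proof---is to first choose a tube \( P \) nestled in a component of \( M \ssm A^{\mathrm o} \) cutting \( M \) into two maximal telescopes \( T_\pm \), factor \( h = h_+ \circ h_- \) with \( h_\pm \) supported in \( T_\pm \), and run the swindle in each half independently, shifting toward the respective maximal end. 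Since \( \vp_\pm \) and \( \sigma_\pm \) are supported in the disjoint sets \( T_\pm \), setting \( \vp = \vp_+ \circ \vp_- \) and \( \sigma = \sigma_+ \circ \sigma_- \) gives \( [\vp,\sigma] = [\vp_+,\sigma_+]\circ[\vp_-,\sigma_-] = h_+ \circ h_- = h \), i.e.\ a single commutator rather than a product of two. Relatedly, your remark that the capped tube should ``absorb the incoming material'' points the shift in the wrong direction: supports must be pushed toward a maximal end, never toward the compact side, or local finiteness fails for the same reason as above.
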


\begin{proof}
Let \( h \in H(M) \) be supported in a standard union of tubes, call it \( A \), and let \( \{A_j\}_{j\in J} \) be a locally finite family of pairwise-disjoint tubes such that \( A = \bigcup A_j \). 
Pick a tube \( P \) nestled in a component of \( M \ssm A^\mathrm o \) such that there is a component, call it \( T_+ \), of \( M \ssm P^\mathrm o \) containing \( A_j \) for all \( j \geq 0 \). 
Note \( T_+ \) is a maximal telescope. 
Let \( T_- \) denote the other component of \( M \ssm P^\mathrm o \).
We can factor \( h = h_+ \circ h_- \) with \( h_\pm \) supported in \( T_\pm \). 

Let us work with \( h_+ \).
Using Theorem~\ref{thm:topology}, we see there exists a homeomorphism \( \sigma_+ \in H(M) \) supported in \( T_+ \) such that \( \sigma_+(A_j) = A_{j+1} \) for all \( j \geq 0 \). 
Observe that \( \{\supp(\sigma_+^n \circ h_+ \circ \sigma_+^{-n})\}_{n \in \bn} \) is locally finite, and hence, we can define \( \vp_+ := \prod_{n=0}^\infty (\sigma_+^n \circ h_+ \circ \sigma_+^{-n}) \).
It is a standard exercise to show that \( h_+ = [\vp_+, \sigma_+] \). 

If \( M \) has a unique maximal end, then \( h_- \) is the identity, and we are done.
Otherwise, we can similarly define \( \vp_- \) and \( \sigma_- \) such that \( h_- = [\vp_-, \sigma_-] \).
Set \( \vp = \vp_+\circ \vp_- \) and \( \sigma = \sigma_+ \circ \sigma_- \). 
Now, \( \vp_\pm \) and \( \sigma_\pm \) are supported in \( T_\pm \), and as \( T_+ \cap T_- = \varnothing \), we have that \( h = [\vp, \sigma] \). 
\end{proof}

As an immediate consequence of Corollary~\ref{cor:simple_fragmentation} and Lemma~\ref{lem:commutator}, we have:

\begin{Thm}
\label{thm:uniform2}
Let \( M \) be a telescoping 2-manifold with a finite number of maximal ends. 
Then, \( H(M) \) has commutator width at most two. 
\qed
\end{Thm}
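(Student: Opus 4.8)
The plan is to obtain Theorem~\ref{thm:uniform2} as a two-line deduction from the two results immediately preceding it. Given $h \in H(M)$, first apply Corollary~\ref{cor:simple_fragmentation} to write $h = h_1 \circ h_2$ with each $h_i$ supported in a standard union of tubes. Then apply Lemma~\ref{lem:commutator} to each factor, producing $\varphi_i, \sigma_i \in H(M)$ with $h_i = [\varphi_i, \sigma_i]$. Thus $h = [\varphi_1,\sigma_1]\circ[\varphi_2,\sigma_2]$ is a product of two commutators, and since $h$ was arbitrary this gives commutator width at most two. Note that no lower bound is asserted here, so nothing further is required.

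All of the real content sits in those two inputs, so if one were building the statement from the ground up the work would be organized exactly as in the preceding sections. The decomposition $h = h_1 \circ h_2$ comes from the structure theorem (Theorem~\ref{thm:topology}) together with an exhaustion argument in the spirit of Le Roux--Mann: choose a tubular decomposition, pass to a carefully selected increasing sequence of tubes adapted to $h$, and use Proposition~\ref{prop:local_approx} to replace $h$ on a cofinal family of tubes by a homeomorphism supported in nestled subtubes; the residual homeomorphism is then supported in a complementary standard union of tubes. The realization of a homeomorphism supported in a standard union of tubes $A = \bigcup_j A_j$ as a single commutator is the classical infinite-swindle: Theorem~\ref{thm:topology} supplies a shift $\sigma \in H(M)$ with $\sigma(A_j) = A_{j+1}$, and by local finiteness of supports the infinite product $\varphi = \prod_{n\geq 0} \sigma^n \circ h \circ \sigma^{-n}$ is a well-defined homeomorphism satisfying $h = [\varphi,\sigma]$; when $M$ has two maximal ends one first splits $h$ into two pieces with disjoint support, one accumulating on each maximal end, and runs the swindle on each.

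The step I expect to be the genuine obstacle is the fragmentation: arranging that every component of the supports is \emph{nestled} in a tube on which a shift is available, that the two resulting standard unions of tubes interact in the controlled way needed (each $A_j$ meeting exactly two of the $B_k$'s), and that the leftover middle piece is absorbed without spoiling these properties. Once the shifts are in hand, the commutator identities are routine.
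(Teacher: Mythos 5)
Your proposal matches the paper exactly: Theorem~\ref{thm:uniform2} is stated there as an immediate consequence of Corollary~\ref{cor:simple_fragmentation} and Lemma~\ref{lem:commutator}, precisely the two-line deduction you give, and your sketch of how those two inputs are established (fragmentation via Proposition~\ref{prop:fragmentation} and the infinite-swindle commutator trick, split over the two maximal ends when necessary) is also the paper's route. Nothing is missing.
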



\section{Strong distortion}

We take the proof for strong distortion of \( \Homeo(\br^n) \) given by Le Roux--Mann \cite{LeRouxStrong} as a guide for establishing strong distortion for telescoping 2-manifolds.  
Le Roux--Mann's arguments rely on the structure of the real line and use polar coordinates for \( \br^n \); however, the results from Section~\ref{sec:topology}, establishing the topological structure of telescoping 2-manifolds, provide us with the necessary setup to execute the general strategy laid out by Le Roux--Mann. 

Given a sequence of homeomorphisms, Proposition~\ref{prop:fragmentation} gives us a decomposition of each homeomorphism in the sequence into a product of elements with corresponding factors having common support. 
The next lemma will let us displace these supports all at once so that we can realize all the elements (or rather conjugates of the elements) of the sequence acting on the manifold simultaneously. 
This will allow us to apply a known commutator trick to finish the argument. 

\begin{Lem}
\label{lem:translations}
Let \( M \) be a telescoping 2-manifold, let \( T \) be a maximal telescope in \( M \), and let \( \{ P_j \}_{j\in\bn} \) be a locally finite family of pairwise-disjoint tubes contained in \( T^\mathrm o \).
If the closure of each component of \( T \ssm \bigcup P_j \) is a tube and  \( P_{j+1} \) separates \( P_j \) from \( P_{j+2} \), then there exists \( \sigma, \tau \in H(M) \)  supported in \( T \)  such that 
\begin{enumerate}[(i)]
\item \( \{ \sigma^n(P_j), \tau^m(P_i) : n,m \geq 0, i,j \in \bn \} \) is a locally finite family of pairwise disjoint tubes, and
\item \( \sigma^n(P_j) \cap \tau^m(P_i) = \varnothing \) for all \( n,m \in \bz \) and for all \( i,j \in \bn \) unless \( n = m = 0 \) and \( i = j \). 
\end{enumerate}
\end{Lem}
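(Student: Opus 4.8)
The plan is to use the topological model provided by Theorem~\ref{thm:topology} to build the two homeomorphisms $\sigma$ and $\tau$ ``by hand'' inside $T$, arranging their orbits so that the families $\{\sigma^n(P_j)\}$ and $\{\tau^m(P_i)\}$ run off toward the defining end of $T$ along two interleaved but disjoint ``sub-ladders'' of tubes. First I would fix a tubular decomposition of the maximal telescope $T$: since $T$ is a maximal telescope, Theorem~\ref{thm:topology}(2) says $T \cong P_0 \tilde\times \bn$ for a tube $P_0$, so I may choose a locally finite sequence of tubes $\{Q_k\}_{k\in\bn}$ with pairwise-disjoint interiors, $Q_k$ and $Q_{k+1}$ sharing a boundary component, $T = \bigcup_k Q_k$, and each $Q_k$ ambiently homeomorphic (by Proposition~\ref{prop:amb_tube}) to every other. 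By the hypothesis on the $P_j$'s (locally finite, linearly ordered by separation, with tube complementary pieces inside $T$), after passing to a coarser tubular decomposition I can assume each $P_j$ is nestled in a single $Q_{k(j)}$ with $k(j)$ strictly increasing; this is where I use that the closure of each component of $T \smallsetminus \bigcup P_j$ is a tube, so these pieces can be absorbed into the $Q_k$'s.

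Next I would construct $\sigma$ and $\tau$ as ``shift'' maps on disjoint cofinal subfamilies. Partition the index set into, say, the even and odd blocks: let $\sigma$ be supported in $\bigcup_{k \text{ even}} (Q_k \cup Q_{k+1})$-type regions and push each relevant tube two steps deeper, so that $\sigma^n(P_j)$ for $n \ge 0$ marches off through the even-indexed slots, while $\tau$ does the analogous thing on the odd-indexed slots. Concretely, using Lemma~\ref{lem:nestled} (and Proposition~\ref{prop:amb_tube}) repeatedly I can find, for each pair of nestled tubes in a common larger tube, an ambient homeomorphism of that larger tube carrying one onto the other; stringing these together along the ladder and taking an infinite product (the supports form a locally finite family since the $Q_k$ do) yields $\sigma$ and $\tau$ supported in $T$. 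The key design point is to keep the ``$\sigma$-track'' and the ``$\tau$-track'' in disjoint unions of $Q_k$'s, and to keep each $P_i$ ($i$ large) fixed by $\sigma$ until the index reaches its turn and fixed by $\tau$ always (or vice versa), so that the only coincidence $\sigma^n(P_j) = \tau^m(P_i)$ forces $n = m = 0$, $i = j$; conditions (i) and (ii) then follow from the locally finite, pairwise-disjoint, strictly-increasing-index structure of the ladder together with the fact that shifting a nestled tube along the ladder produces again a tube (Theorem~\ref{thm:topology}(1), since finite unions of consecutive tubes are tubes).

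The main obstacle I expect is bookkeeping rather than any deep point: one has to choose the two tracks and the shift amounts carefully enough that for \emph{every} $n, m \in \bz$ (not just $n,m \ge 0$) and every $i, j$ the tubes $\sigma^n(P_j)$ and $\tau^m(P_i)$ are disjoint except in the trivial case, which means controlling negative powers of $\sigma$ and $\tau$ as well — so the supports of $\sigma$ and $\tau$ must be chosen so that $\sigma^{-1}$ moves things into yet-unused slots too, or so that $P_j$ lies at the ``bottom'' of the $\sigma$-orbit and $\tau$-orbit it participates in. The cleanest way to handle this is to arrange that $\sigma$ and $\tau$ are supported in tubes disjoint from $T \smallsetminus T'$ for a slightly smaller maximal telescope $T' \Subset T$ containing all the $P_j$, use the ladder only in the forward direction, and note that $\sigma^{-n}(P_j)$, $\tau^{-m}(P_i)$ then land in the part of the ladder ``below'' all the $P_i$'s where nothing else lives; disjointness in all cases then reduces to the single forward-direction statement. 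I would present the construction in terms of the chain-product model of $T$ from Theorem~\ref{thm:topology}, where $\sigma$ and $\tau$ become honest index-shifts on sub-chains, making both the local finiteness and the disjointness essentially immediate.
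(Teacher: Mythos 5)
Your high-level strategy---work in the chain-product model of \( T \), build \( \sigma \) and \( \tau \) as explicit shifts along a tubular decomposition, and handle negative powers by parking them in slots below all the \( P_j \)---matches the spirit of the paper's proof, and you correctly flag negative powers as a point needing care. However, the central mechanism you propose contradicts condition (ii). You suggest supporting \( \sigma \) and \( \tau \) in disjoint unions of \( Q_k \)'s (an ``even track'' and an ``odd track'') and keeping each \( P_i \) ``fixed by \( \sigma \) until the index reaches its turn and fixed by \( \tau \) always (or vice versa).'' But taking \( n=0 \), \( m=1 \), \( i=j \) in (ii) forces \( P_j \cap \tau(P_j) = \varnothing \), and taking \( n=1 \), \( m=0 \), \( i=j \) forces \( \sigma(P_j) \cap P_j = \varnothing \): both \( \sigma \) and \( \tau \) must displace every \( P_j \) off of itself (indeed off of the entire family and off of each other's orbits). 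Hence every \( P_j \) must lie in the support of both maps, the two supports cannot be disjoint, and no \( P_i \) may be fixed by either map. The genuine difficulty of the lemma is to make the forward orbit families \( \{\sigma^n(P_j)\}_{n\ge 1} \) and \( \{\tau^m(P_i)\}_{m\ge 1} \) disjoint from one another even though \( \sigma \) and \( \tau \) both act nontrivially on all the \( P_j \); your proposal supplies no device for this.

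The paper's proof has two ingredients you are missing. First, a reduction: since any two families satisfying the hypotheses are carried to one another by a homeomorphism supported in \( T \) (via Theorem~\ref{thm:topology}), it suffices to construct \emph{some} family \( \{P_j\} \) together with \( \sigma \) and \( \tau \), rather than work with the given one. Second, a twist: one chooses a shift \( \sigma \), places disjoint tubes \( R_k \) nestled in a single tube \( R \), puts a pair of disjoint tubes \( P_k, P_k' \) inside \( \sigma^k(R_k) \), and sets \( \tau = \tau_0 \circ h \), where \( h \) is supported in \( \bigcup_k \sigma^k(R_k) \) and moves each \( P_k \) onto its companion \( P_k' \), while \( \tau_0 \) agrees with \( \sigma \) deep in the telescope but differs near \( \partial T \) so that \( \tau_0^{-n}(R) \) and \( \sigma^{-m}(R) \) land in disjoint tubes (this is what handles negative powers). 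The pre-composition with \( h \) sends the \( \tau \)-orbit of \( P_k \) into the \( \sigma \)-orbit of the \( P_k' \)'s, which is disjoint from the \( \sigma \)-orbit of the \( P_k \)'s; without some such twist the two orbits collide, which is exactly where your construction breaks down.
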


\begin{proof}
Observe that, using Theorem~\ref{thm:topology}, if \( \{P_j'\}_{j\in\bn} \) is another locally finite family of pairwise disjoint tubes in \( T^\mathrm o \) such that the closure of each component of \( T \ssm \bigcup P'_j \) is a tube and such that \( P'_{j+1} \) separates \( P'_j \) from \( P'_{j+2} \), then there exists a homeomorphism \( f \) supported in \( T \) such that \( f(P'_j) = P_j \) for each \( j \in \bn \). 
Therefore, to prove the statement, it enough to show that some such family of tubes exists; in particular, we will construct the family \( \{P_j\}_{j\in\bn} \) with the desired properties, rather than work with a provided family.

Choose a nested sequence \( \{ T_n \}_{n\in\bn} \) of maximal telescopes contained in \( T \) such that 
    \begin{itemize}
        \item \( T \ssm T_1^\mathrm o \) is a tube in \( M \),
        \item \( T_n \ssm T_{n+1}^\mathrm o \) is a tube \( Q_n \) for all \( n \in \bn \), and
        \item \( \bigcap_{n\in\bn} T_n  = \varnothing \). 
    \end{itemize}
Let \( \sigma \in \Homeo(M) \) be supported in \( T_2 \) such that \( \sigma(T_{2+i}) = T_{3+i} \) for all \( i \in \bn \); in particular, \( \sigma(Q_1) = Q_1 \), \( \sigma(Q_2) = Q_2 \cup Q_3 \), and \( \sigma(Q_{2+i}) = Q_{3+i} \) for all \( i \in \bn \). 
The existence of \( \sigma \) is a consequence of Theorem~\ref{thm:topology}(i--ii) and Proposition~\ref{prop:amb_tube}.

We will now construct \( \tau \). 
Let \( R \) be a tube nestled in \( \sigma^{-1}(Q_3) \subset Q_2 \).
Observe that \( \{ \sigma^n(R) \}_{n\in\bn} \) is a locally finite family of tubes and \( \sigma^n(R) \cap \sigma^m(R) = \varnothing \) whenever \( n \) and \( m \) are distinct integers. 
Next, choose a sequence \( \{R_k\}_{k\in\bn} \) of pairwise-disjoint tubes nestled in \( R \); such a sequence is guaranteed to exist by Theorem~\ref{thm:topology}(i). 
For each \( k \in \bn \), let \( P_k \) and \( P_k' \) be disjoint tubes nestled in \( \sigma^k(R_k) \).
Then, by Lemma~\ref{lem:nestled}, we can choose a homeomorphism \( h_k \) supported in \( \sigma^k(R_k) \) such that \( h_k(P_k) = P_k' \). 
As \( \{ \sigma^k(R_k)\}_{k\in\bn} \) is a locally finite family consisting of pairwise-disjoint sets and the support of \( h_k \) is contained in \( \sigma^k(R_k) \), we can  define \( h = \prod_{k\in\bn} h_k \). 
Now, choose \( \tau_0 \) such that:
\begin{enumerate}[(i)]
\item \( \tau_0 \) is supported in \( T_1 \), so that \( \tau_0(Q_1) = Q_1 \cup Q_2 \),
\item \( \tau_0(Q_2) = Q_3 \), and
\item \( \tau_0 \) agrees with \( \sigma \) on \( T_3 \).
\end{enumerate}
Observe that \( \tau_0^{-n}(R) \subset Q_1 \) for all \( n \in \bn \) while \( \sigma^{-m}(R) \subset Q_2 \) for all \( m \in \bn \); in particular, \( \tau_0^{-n}(R) \cap \sigma^{-m}(R) = \varnothing \) for all \( n, m \in \bn \). 
Set \( \tau = \tau_0 \circ h \). 

Now, for \( n \in \bn \), observe that \( T \ssm T_n \) contains \( \sigma^m(P_k) \) (resp., \( \tau^m(P_k) \)) if and only if \( m+k \leq n - 3 \).
Hence, as every compact subset of \( T \) is contained in \( T \ssm T_n \) for some \( n \),  (i) holds, that is, \( \{ \sigma^n(P_j), \tau^m(P_i) : n,m \geq 0, i,j \in \bn \} \) is a locally finite family of subsets in \( T \).
Finally, it is readily verified that (ii) holds, that is, \( \sigma^n(P_j) \cap \tau^m(P_i) = \varnothing \) for all \( n,m \in \bz \) and for all \( i,j \in \bn \) unless \( n = m = 0 \) and \( i = j \). 
\end{proof}

We can now establish a quantitative version of strong distortion for \( H(M) \). 

\begin{Thm}
\label{thm:distortion2}
Let \( M \) be a telescoping 2-manifold with a finite number of maximal ends.
Given a sequence \( \{h_n \}_{n\in\bn} \) in \( H(M) \), there exists a subset \( \mathcal S \) of \( H(M) \) of cardinality 9 such that \( h_n \) can be expressed as a word in \( \mathcal S \) of length \( 26n+12 \). 
\end{Thm}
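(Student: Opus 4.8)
The plan is to combine the fragmentation result (Proposition~\ref{prop:fragmentation}) with the displacement result (Lemma~\ref{lem:translations}) and a standard commutator trick used by Calegari--Freedman and Le Roux--Mann. First I would apply Proposition~\ref{prop:fragmentation} to the given sequence \( \{h_n\}_{n\in\bn} \), obtaining standard unions of tubes \( A = \bigcup A_j \) and \( B = \bigcup B_j \), and factorizations \( h_n = \alpha_n \circ \beta_n \circ \gamma_n \) where \( \alpha_n \) is a commutator of elements supported in \( A \), \( \beta_n \) is a commutator of elements supported in \( B \), and \( \gamma_n \) is supported in a single tube; moreover the \( A_j \)'s and \( B_j \)'s interleave in the prescribed way. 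The idea is then to find a fixed finite set \( \mathcal S \) in which each of \( \alpha_n \), \( \beta_n \), \( \gamma_n \) can be written as a short word, and multiply the lengths accordingly. Since \( \alpha_n = [f_n, g_n] \) with \( f_n, g_n \) supported in \( A \), and likewise for \( \beta_n \), and since each \( \gamma_n \) is supported in a tube, it suffices to encode the sequences \( \{f_n\}, \{g_n\}, \{f_n'\}, \{g_n'\}, \{\gamma_n\} \) uniformly.

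The key step is the commutator/telescoping trick. Focus on a single sequence, say \( \{f_n\}_{n\in\bn} \) with \( \supp(f_n) \) nestled in the component \( A_n \). Split \( A \) into its even part and odd part so that each \( A_n \) meets at most one translate of a fundamental tube at a time; more precisely, using Lemma~\ref{lem:translations} applied inside the maximal telescope containing the \( A_j \)'s (with the role of \( \{P_j\} \) played by suitable tubes nestled in the \( A_j \)), we get homeomorphisms \( \sigma, \tau \in H(M) \) supported in \( T \) and a single homeomorphism \( \Phi \) (a product over \( j \) of conjugates that ``carries'' the \( j \)-th piece to a standard location) such that every \( f_n \) can be recovered from \( \Phi \), \( \sigma \), and \( \tau \) by a bounded-length word: the point is that \( \sigma^j \Phi \sigma^{-j} \) and \( \tau^j \Phi \tau^{-j} \) have disjoint supports as \( j \) varies, so one uses \( \Phi \), conjugated by the appropriate power of \( \sigma \) or \( \tau \) (of length \( O(n) \)), to pick off precisely the \( n \)-th piece. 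Concretely, following Le Roux--Mann \cite[\S4]{LeRouxStrong}, one arranges that \( f_n = [\,\sigma^n \Phi \sigma^{-n},\ \psi\,] \) or a similar constant-complexity expression for a single auxiliary \( \psi \), so \( f_n \) is a word of length linear in \( n \) in the letters \( \{\sigma, \tau, \Phi, \psi\} \). Doing this for \( \{f_n\}, \{g_n\}, \{f_n'\}, \{g_n'\} \) simultaneously (reusing the same \( \sigma, \tau \) and only changing the ``content'' homeomorphism) and handling \( \{\gamma_n\} \) with one more such gadget, one collects a set \( \mathcal S \) of \emph{nine} homeomorphisms and tracks the word lengths: each \( \alpha_n \) costs roughly \( 2(4n+c) \) letters as a commutator of two linear-in-\( n \) words, similarly \( \beta_n \), and \( \gamma_n \) costs roughly \( 4n+c \), giving a total of \( 26n+12 \) after bookkeeping. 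The number \( 9 \) and the bound \( 26n+12 \) should come out of carefully counting: generators for the \( \sigma \)'s and \( \tau \)'s, the content homeomorphisms for the \( \alpha \)- and \( \beta \)-parts, and the one for the \( \gamma \)-part, plus one element to synchronize the two telescopes \( T_+, T_- \) when there are two maximal ends.

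The main obstacle I anticipate is the simultaneous displacement across the two directions of the manifold (the two maximal ends), i.e.\ making the \emph{same} finite set \( \mathcal S \) work for \emph{all} of \( A \) and \( B \) at once, with the interleaving constraint \( B_j \cap A_k \neq \varnothing \iff k \in \{j, j+1\} \) from Proposition~\ref{prop:fragmentation}. One has to choose the tubes in Lemma~\ref{lem:translations} compatibly with both the \( A \)-decomposition and the \( B \)-decomposition so that the \( \sigma \)- and \( \tau \)-orbits used for \( A \) and the ones used for \( B \) don't interfere; this is exactly the geometric content that makes the ``realize all \( h_n \) simultaneously'' step work, and it is where the structure theory (Theorem~\ref{thm:topology} and Proposition~\ref{prop:amb_tube}) is essential. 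The remaining difficulty is purely combinatorial: keeping the count of generators down to \( 9 \) by maximal reuse of letters, and verifying the exact word-length \( 26n+12 \); this is routine but tedious, and the precise constant is an artifact of the chosen encoding rather than anything intrinsic.
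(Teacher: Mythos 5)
Your proposal follows essentially the same route as the paper: apply Proposition~\ref{prop:fragmentation} to get \( h_n = \alpha_n\circ\beta_n\circ\gamma_n \), use Lemma~\ref{lem:translations} to build translations \( \sigma,\tau \), encode each sequence of commutator factors into a single infinite product (the paper's \( F = \prod \sigma^n f_n \sigma^{-n} \), \( G = \prod \tau^n g_n \tau^{-n} \)), and recover \( \alpha_n = [\sigma^{-n}F\sigma^n, \tau^{-n}G\tau^n] \) because the shifted products only interact over \( A \); the \( \gamma_n \) are handled by one extra contracting element \( \vp \), yielding the alphabet of size 9 and the length \( 26n+12 \). The only cosmetic difference is your suggestion to reuse \( \sigma,\tau \) across the \( A \)- and \( B \)-families, whereas the paper simply takes a fresh quadruple for \( B \) (8 letters for \( \alpha_n\beta_n \), plus \( \vp \)), which avoids the compatibility issue you flag.
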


\begin{proof}
Apply Proposition~\ref{prop:fragmentation} to the sequence \( \{ h_n \} \): let \( A = \bigcup A_j \) and \( B = \bigcup B_j \) be the resulting standard unions of tubes, and let \( h_n = \alpha_n \circ \beta_n \circ \gamma_n \) be the resulting fragmentation.

Let \( T^+ \) be the maximal telescope sharing a boundary component with \( A_0 \) and containing \( B_0 \).
If \( M \) has two maximal ends, then we let \( T^- \) denote the maximal telescope sharing a boundary component with \( B_0 \) and containing \( A_0 \). 
By assumption, \( \alpha_n \) and \( \beta_n \) each restrict to the identity on \( A_0 \cup B_0 \), and hence, we can write \( \alpha_n = \alpha^+_n \circ \alpha^-_n \) and \( \beta_n = \beta^+_n \circ \beta^-_n \) with \( \supp(\alpha_n^+), \supp(\beta_n^+) \subset T^+ \) and with \( \supp(\alpha_n^-), \supp(\beta_n^-) \subset T^- \) if \( M \) has two maximal ends or with \( \alpha_n^- \) and \( \beta_n^- \) equal to the identity if \( M \) has a unique maximal end.

Let us first work with the \( \alpha_n^+ \). 
Let \( \sigma_+ \) and \( \tau_+ \) be the homeomorphisms supported in \( T^+ \) obtained by applying Lemma~\ref{lem:translations} to \( T^+ \) and the tubes \( \{A_j\}_{j>0} \). 
Write \( \alpha^+_n = [f^+_n, g^+_n] \) with the supports of \( f_n \) and \( g_n \) contained in \( A \). 
Lemma~\ref{lem:translations}(ii) guarantees the existence of the following infinite products: 
	\[
		F^+ := \prod_{n=0}^\infty \sigma_+^n f^+_n \sigma_+^{-n} \, \,  \text{ and } \, \, 
		G^+ := \prod_{n=0}^\infty \tau_+^n g^+_n \tau_+^{-n}
	\]
If \( M \) has two maximal ends, we define \( \sigma_- \), \( \tau_- \), \( F^- \) and \( G^- \) analogously; otherwise, set \( \sigma_- \), \( \tau_- \),  \( F^- \) and \( G^- \) to be the identity.
Define \( \sigma = \sigma^+ \circ \sigma^- \), \( \tau = \tau^+\circ \tau^- \), \( F = F^+\circ F^- \) and \( G = G^+ \circ G^- \).

Now observe that for each \( n \in \bn \), by Lemma~\ref{lem:translations}(ii), the intersection of the support of \( \sigma^{-n} \circ F \circ \sigma^n \) and the support of \( \tau^{-n} \circ G \circ \tau^n \) is \( A \), and hence their commutator vanishes on the complement of \( A \).
Therefore, for each \( n \in \bn \), 
	\[
		\alpha_n = [\sigma^{-n} \circ F \circ \sigma^n,\tau^{-n}\circ G \circ \tau^n] 
	\]
It follows that \( \alpha_n \) can be written as a word in \( \{ F, G, \sigma, \tau \} \) of length \( 8n +4 \).
Repeating the above process for the \( \beta_n \), we see that we can write \( \alpha_n \circ \beta_n \) in a word of length \( 16n+8 \) in an alphabet of size 8. 

It is left to consider the \( \gamma_n \). 
If \( M \) has two (resp., one) maximal ends, let \( \{ P_n \}_{n\in\bn} \) be a sequence of tubes (resp., capped tubes) such that \( P_n \subset P_{n+1} \), such that \( M = \bigcup P_n \), such that \( P_1 \subset A_1 \),  such that \( P_{n+1} \ssm P_n^\mathrm o \) is a tube, and such that \( \supp(\gamma_n) \subset P_n \). 
Choose  \( \vp \in H(M) \) such that \( \vp^n(P_n) \subset P_1 \); the existence of such a homeomorphism can be deduced from Proposition~\ref{prop:amb_tube} and Theorem~\ref{thm:topology}.
Then, \( \{ \vp^n \circ \gamma_n \circ \vp^{-n} \}_{n\in\bn} \) is a sequence of homeomorphisms each of which is supported in \( P_1 \subset A \). 
Using our above argument, this means that \( \vp^n \circ \gamma_n \circ \vp^{-n} \) can be expressed as a word in \( \{F, G, \sigma, \tau \} \) of length \( 8n+4 \), and hence, \( \gamma_n \) can be expressed as a word in \( \{F, G, \sigma, \tau, \vp \} \) of length \( 10n + 4 \).

All together, there exists a set \( \mathcal S \) of cardinality 9 such that, for each \( n \in \bn \),  \( h_n \) can be expressed as a word in \( \mathcal S \) of length \( 26n+12 \). 
\end{proof}


\section{Normal generators}
\label{sec:dilations}

In the final section, given a telescoping 2-manifold \( M \) with a finite number of maximal ends, we establish a class of normal generators for \( H(M) \).  
We then proceed to show that \( H(M) \) is generated by involutions.
The argument presented here is motivated by Ling's proof \cite{LingAlgebraic} of the fact that, given a compact manifold \( N \), the germ at each end of \( N \times \br \) is simple.  

\begin{Def}[Dilation at infinity]
\label{def:dilation}
Let \( M \) be a telescoping 2-manifold, and let \( \mu \) be a maximal end of \( M \).
A homeomorphism \( \sigma \co M \to M \) is  \emph{contracting} (resp., \emph{expanding}) at \( \mu \) if \( \hat\sigma \) fixes \( \mu \)  and there  exists a maximal telescope \( T \) with defining end  \( \mu \) such that  \( \bigcap_{n \in \bn} \sigma^n(T) = \varnothing \)\footnote{This is equivalent to \( \bigcap_{n \in \bn} \hat\sigma^n(\widehat T) = \{\mu\}. \)}  (resp., \( \bigcap_{n\in\bn} \sigma^{-n}(T) = \varnothing \)).
If, in addition, \( T \ssm \sigma(T) \) (resp., \( T \ssm \sigma^{-1}(T) \)) is a tube, then we say that \( \sigma \) is \emph{strongly} contracting (resp., expanding) at \( \mu \).
A homeomorphism is a \emph{(strong) dilation at \( \mu \)} if it is either (strongly) contracting or (strongly) expanding at \( \mu \), and we say, \( \sigma \) is a \emph{(strong) dilation at infinity} if it is a (strong) dilation at each maximal end of \( M \). 
\end{Def}

\textbf{Remark.}
It is not clear to the author if every dilation at infinity is in fact a strong dilation at infinity.  
The author expects this to be true if one assumes the underlying manifold is tame in the sense of Mann--Rafi \cite{MannLarge}; however, we will not investigate this here, as it is tangential to the main point.  

Observe that if \( M = \br^2 \) or \( M = \br^2 \ssm \{0\} \), then the standard notion of a dilation (i.e., a transformation of the form \( x \mapsto kx \) for some \( k \in \br \)) is a dilation at infinity in the sense given above. 
Also observe that if \( M \) is telescoping with two maximal ends, then a dilation at infinity can be contracting (or expanding) at both of the maximal ends. 

Going forward, we will use the following notation for conjugation: 
If \( a \) and \( b \) are elements in a group, we let \( a^b = bab^{-1} \).

\begin{Thm}
\label{thm:dilation2}
Let \( M \) be a telescoping 2-manifold with finitely many maximal ends, and let \( \sigma \in \Homeo(M) \) be a strong dilation.
If \( M \) has a unique maximal end, then every element \( H(M) \) can be expressed as a product of 8 conjugates of \( \sigma \) and \( \sigma^{-1} \); otherwise, every element \( H(M) \) can be expressed as a product of 16 conjugates of \( \sigma \) and \( \sigma^{-1} \)
\end{Thm}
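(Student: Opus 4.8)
The plan is to reduce an arbitrary element of $H(M)$ to something supported in a single tube, and then use the strong dilation $\sigma$ to absorb that tube via a telescoping commutator argument. I would first invoke Corollary~\ref{cor:simple_fragmentation} to write a given $h \in H(M)$ as a product of two homeomorphisms each supported in a standard union of tubes; this immediately halves the bookkeeping, since it suffices to treat a single homeomorphism $f$ supported in a standard union of tubes $A = \bigcup_{j \in J} A_j$ and then double the final count. (When $M$ has two maximal ends, $J = \bz$ and the standard union reaches out toward both ends; when $M$ has one maximal end, $J = \bn \cup \{0\}$, which accounts for the factor-of-two discrepancy between the two cases in the statement.)

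Next, using Lemma~\ref{lem:commutator} and its proof, I would split $f$ into pieces near each maximal end: for the two-ended case, $f = f_+ \circ f_-$ with $f_\pm$ supported in maximal telescopes $T_\pm$ having disjoint interiors and defining ends the two maximal ends. Concentrating on one piece, say $f_+$ supported in $T_+$, the key is that $\sigma$ is a \emph{strong} dilation: after replacing $\sigma$ by a conjugate if necessary (conjugates of $\sigma$ are still strong dilations), I may arrange that the ``annular'' region $T_+ \ssm \sigma(T_+)$ is a tube, and that $f_+$ is supported in this tube, i.e. supported in a fundamental domain for the $\langle \sigma \rangle$-action on $T_+$. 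This is where Proposition~\ref{prop:amb_tube} and Theorem~\ref{thm:topology} get used: since any two tubes are ambiently homeomorphic and $T_+ \cong P \tilde\times \bn$, I can push the support of $f_+$ into the chosen fundamental tube by an element supported in $T_+$, at the cost of replacing $f_+$ by a conjugate.

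Once $f_+$ is supported in the fundamental tube of $\sigma$ in $T_+$, the supports $\{\sigma^n \circ f_+ \circ \sigma^{-n}\}_{n \geq 0}$ form a locally finite family of pairwise-disjoint sets (this is exactly the ``$\bigcap_n \sigma^n(T) = \varnothing$'' clause in the definition of contracting), so the infinite product $\Phi_+ := \prod_{n=0}^\infty \sigma^n f_+ \sigma^{-n}$ is a well-defined homeomorphism, and the standard telescoping identity gives $f_+ = [\Phi_+, \sigma] = \Phi_+ \sigma \Phi_+^{-1} \sigma^{-1} = \sigma^{\Phi_+} \cdot \sigma^{-1}$, expressing $f_+$ as a product of \emph{two} conjugates of $\sigma^{\pm 1}$. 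Since $f_+$ was itself a conjugate of the original near-one-end piece, we spend these two conjugates per piece. Counting: each of the two factors from Corollary~\ref{cor:simple_fragmentation} splits into (at most) two end-pieces in the two-ended case, each end-piece costs two conjugates, giving $2 \times 2 \times 2 = 8$ — so I expect the quoted $16$ to come from a slightly more careful accounting (e.g. the fragmentation factor supported in the standard union of tubes may itself need a preliminary conjugation-and-split into ``left'' and ``right'' halves toward the two ends, doubling $8$ to $16$); in the one-ended case only the single maximal end is present, so the $f_-$ piece is trivial and the count drops to $8$.

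The main obstacle I anticipate is the orientability/boundary-reversal subtlety already flagged in Proposition~\ref{prop:local_approx}: when we push the support of $f_+$ into a fundamental tube via an ambient homeomorphism, or when we write $f_+ = [\Phi_+, \sigma]$, we must ensure the conjugating homeomorphisms actually lie in $H(M)$ — in the finite-maximal-end case this means orientation-preserving (if $M$ is orientable) and fixing each maximal end. Fixing the maximal ends is automatic since all the maps are supported in telescopes or tubes away from (or invariant at) the defining ends, and $\hat\sigma$ fixes each maximal end by hypothesis; the orientation condition requires, as in the discussion preceding Proposition~\ref{prop:local_approx}, possibly correcting by boundary slides in the non-orientable case, which do not affect the count. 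A secondary technical point is checking the telescoping commutator identity $f_+ = [\Phi_+, \sigma]$ rigorously in this infinite-support setting — but this is the ``standard exercise'' already cited in the proof of Lemma~\ref{lem:commutator}, so I would simply reference it. Finally, I would deduce Theorem~\ref{thm:dilation} from Theorem~\ref{thm:dilation2} (as Corollary~\ref{cor:dilation}) by observing that an arbitrary dilation at infinity can, after conjugation, be used to produce a strong dilation, or by a direct argument that the non-strong hypothesis only affects the constant, not the normal-generation conclusion.
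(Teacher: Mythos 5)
There is a genuine gap at the central step. After Corollary~\ref{cor:simple_fragmentation}, each factor \( f_+ \) is supported in a \emph{standard union of tubes} \( A = \bigcup_{j} A_j \): an infinite, locally finite family of pairwise-disjoint tubes marching out toward the maximal end \( \mu \). This support is a closed, non-compact set whose closure in the end compactification contains \( \mu \), whereas a single tube \( T_+ \ssm \sigma(T_+) \) sees no maximal end. Since any ambient homeomorphism extends to the end compactification and elements of \( H(M) \) fix the maximal ends, no conjugation can map \( \supp(f_+) \) into a single tube. So the step ``I can push the support of \( f_+ \) into the chosen fundamental tube by an element supported in \( T_+ \)'' fails: Proposition~\ref{prop:amb_tube} and Theorem~\ref{thm:topology} move one tube onto another, not an infinite union of tubes into one tube. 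Consequently the clean telescoping identity \( f_+ = [\Phi_+,\sigma] = \sigma^{\Phi_+}\sigma^{-1} \), which requires the translates \( \sigma^n(\supp f_+) \) to be pairwise disjoint, is not available, and your per-piece cost of two conjugates cannot be achieved this way.

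The paper's proof works around exactly this obstruction. It conjugates \( h \) so that \( A_n = P_{2n} \), i.e.\ the support occupies every \emph{other} fundamental domain \( P_n = \sigma^{n-1}(T'\ssm\sigma(T')) \), forms \( \vp = \prod_{k\ge 0}\sigma^{2k}h\sigma^{-2k} \) (well defined because the even translates of the even-indexed domains stay disjoint in the relevant sense), and then verifies by a domain-by-domain computation that \( h = [\vp,\sigma]\circ[\vp,\sigma]^{\sigma} \). This costs \emph{four} conjugates of \( \sigma^{\pm1} \) per piece, not two, and the counts \( 2\times 1\times 4 = 8 \) (one maximal end) and \( 2\times 2\times 4 = 16 \) (two maximal ends) come out exactly; your guess that the missing factor of two arises from an extra left/right splitting is not the actual source. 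Your surrounding architecture (fragmentation into two factors, splitting into end-pieces, the orientation/boundary-slide caveat, and the deduction of Corollary~\ref{cor:dilation} by passing to a power of \( \sigma \) that is a strong dilation) matches the paper, but the core absorption step needs the doubled-commutator argument rather than the single telescoping identity.
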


\begin{proof}
Using Corollary~\ref{cor:simple_fragmentation} and the same factorization as in the proof of Lemma~\ref{lem:commutator}, it is enough to prove the following: if \( h \in H(M) \) and \( \{ A_n \}_{n\in \bn} \) is a locally finite family of pairwise disjoint tubes contained in the interior of a maximal telescope \( T \) such that \( h \) is supported in \( A =  \bigcup_{j \in J} A_j \) and such that each component of \( T \ssm A^\mathrm o \) is a tube, then  \( h \) is product of four conjugates of \( \sigma \) and \( \sigma^{-1} \).

Let \( \mu \) be the defining end of \( T \). 
Up to replacing \( \sigma \) with \( \sigma^{-1} \), we may assume that \( \sigma \) is contracting at \( \mu \). 
Therefore, there exists a maximal telescope \( T' \) with defining end \( \mu \) such that \( \bigcap_{n\in\bn} \sigma^n(T') = \varnothing \) and \( T' \ssm \sigma(T') \) is a tube. 
Let \( P_1 = T' \ssm \sigma(T') \), and for \( n \in \bn \), set \( P_n = \sigma^{n-1}(P_1) \). 
Observe that \( T' = \bigcup_{n\in\bn} P_n \). 

Using Theorem~\ref{thm:topology}, we can replace \( h \) with a conjugate so that \( T = T' \) and \( A_n = P_{2n} \) for \( n \in \bn \). 
Just as in the proof of Lemma~\ref{lem:commutator}, we can define the infinite product 
	\[
		\vp := \prod_{k=0}^\infty (\sigma^{2k}\circ h \circ \sigma^{-2k}).
	\]
For \( n \in \bn \), let  \( h_n \) (resp., \( \vp_n \)) be the homeomorphism that agrees with \( h \) (resp., \( \vp \)) on \( P_{2n} \) and restricts to the identity on \( M \ssm P_{2n} \). 
Then, we can write 
	\[ 
		\vp_n = \prod_{k=1}^n h_{n-k+1}^{\sigma^{2k-2}} = h_n \circ h_{n-1}^{\sigma^2} \circ \cdots \circ h_1^{\sigma^{2n-2}}
	\]
and \( \vp = \prod_{n\in\bn} \vp_n \).
Let \( \tau = [\vp,\sigma] \circ [\vp, \sigma]^\sigma \).
We claim that \( h = \tau \) , and hence \( h \) is a product of four conjugates of \( \sigma \) and \( \sigma^{-1} \).
First observe that, as \( \vp \) is supported in \(  \bigcup_{n\in\bn} P_{2n} \),  \( [\vp, \sigma] \) is supported in \( \bigcup_{n=2}^\infty P_n \), and \( [\vp,\sigma]^\sigma \) is supported in \( \bigcup_{n=3}^\infty P_n \), it follows that \( \tau \) is supported in \( \bigcup_{n=2}^\infty P_n \). 
Therefore, we need to check that \( h \) and \( \tau \) agree on \( P_n \) for \( n \in \bn \ssm \{1\} \). 

Note that, for each \( n \in \bn \),  \( h \), \( \vp \), \( [\vp, \sigma] \), \( [\vp, \sigma]^\sigma \), and \( \tau \) each fix \( P_n \) setwise and restrict to the identity on \( \partial P_n \), so we will abuse notation and identify their restrictions to each \( P_n \) with the homeomorphism that agrees with it on \( P_n \) and restricts to the identity elsewhere. 
Observe that
\begin{align*}
[\vp, \sigma] |_{P_{2n}} &= \vp|_{P_{2n}} = \vp_n \\
\intertext{and} 
[\vp, \sigma] |_{P_{2n+1}} &= (\vp^{-1})^\sigma|_{P_{2n+1}} = \left(\vp_n^{-1}\right)^\sigma.
\end{align*}

We break the rest of the argument into two computations depending on whether \( n \) is even or odd.
\begin{align*}
 \tau|_{P_{2n}}	&= \left([\vp, \sigma]\circ [\vp,\sigma]^\sigma\right) |_{P_{2n}} \\ 
 			&= \left([\vp, \sigma]|_{P_{2n}}\right) \circ \left([\vp,\sigma]^{\sigma} |_{P_{2n}}\right) \\
			&= \left([\vp, \sigma]|_{P_{2n}}\right) \circ \left([\vp,\sigma]|_{P_{2n-1}}\right)^\sigma \\
 			&= \vp_n \circ \left(\vp_{n-1}^{-1}\right)^{\sigma^2} \\
 			&= \left(\prod_{k=1}^n h_{n-k+1}^{\sigma^{2k-2}}\right) \circ \left(\prod_{k=1}^{n-1} (h_{n-k}^{-1})^{\sigma^{2k}}\right)\\
			&= h_n\circ \left(\prod_{k=1}^{n-1} h_{n-k}^{\sigma^{2k}}\right) \circ \left(\prod_{k=1}^{n-1} (h_{n-k}^{-1})^{\sigma^{2k}}\right)\\
			&= h_n \\
			&= h|_{P_{2n}} 
\intertext{and} 
\tau|_{P_{2n+1}}	&= \left([\vp, \sigma]\circ [\vp,\sigma]^\sigma\right) |_{P_{2n+1}} \\
				&= \left([\vp, \sigma]|_{P_{2n+1}}\right) \circ \left([\vp,\sigma]^{\sigma} |_{P_{2n+1}}\right) \\
				&= \left([\vp, \sigma]|_{P_{2n+1}}\right) \circ \left([\vp,\sigma]|_{P_{2n}}\right)^\sigma \\
				&= \left(\vp_n^{-1} \right)^\sigma \circ \left( \vp_n \right)^\sigma \\
				&= \mathrm{id}_M \\
				&= h|_{P_{2n+1}}
\end{align*}
Therefore, \( \tau = h \) as desired. 
\end{proof}

\begin{Cor}
\label{cor:dilation}
Let \( M \) be a telescoping 2-manifold with finitely many maximal ends. 
If \( \sigma \in H(M) \) is a dilation at infinity, then \( \sigma \) uniformly normally generates \( H(M) \).
\end{Cor}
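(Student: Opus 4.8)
The plan is to derive Corollary~\ref{cor:dilation} from its quantitative counterpart for strong dilations, Theorem~\ref{thm:dilation2}. The reduction rests on the following observation: a positive power $\sigma^N$ lies in the normal closure of $\sigma$ --- it is a product of $N$ conjugates of $\sigma$ by the identity --- and if every element of $H(M)$ is a product of at most $p$ conjugates of $\sigma^{\pm N}$, then every element is a product of at most $pN$ conjugates of $\sigma^{\pm 1}$. Hence it suffices to show that every dilation at infinity $\sigma$ of $M$ has a positive power $\sigma^N$ that is a strong dilation at infinity; then Theorem~\ref{thm:dilation2} applied to $\sigma^N$ gives the conclusion, with $p = 8N$ if $M$ has one maximal end and $p = 16N$ if it has two.

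To produce such a power I would work one maximal end at a time. Fix a maximal end $\mu$; say $\sigma$ is contracting at $\mu$ (the expanding case is symmetric, with $\sigma^{-1}$ in place of $\sigma$), witnessed by a maximal telescope $T$ with $\bigcap_{n}\sigma^n(T) = \varnothing$. The goal at this end is to find a positive integer $N$ and a maximal telescope $T'$ with defining end $\mu$ such that $\sigma^N(T') \subseteq (T')^{\mathrm o}$, $\bigcap_m \sigma^{Nm}(T') = \varnothing$, and $T' \ssm \sigma^N(T')^{\mathrm o}$ is a tube --- that is, such that $\sigma^N$ is strongly contracting at $\mu$ with witness $T'$. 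Once the first two conditions are arranged, the third is essentially automatic: $\sigma^N(T')$ is again a maximal telescope (the image of one under an element of $H(M)$), it lies in $(T')^{\mathrm o}$, and $\partial\sigma^N(T')$ separates $\partial T'$ from $\mu$, so by Lemma~\ref{lem:deep_lens}, Theorem~\ref{thm:topology}, and the fact that a finite chain of consecutive tubes is again a tube, $T'\ssm\sigma^N(T')^{\mathrm o}$ is a tube. Running this at each of the at most two maximal ends (contracting or expanding as $\sigma$ dictates there) and taking $N$ to be a common value that works at all of them yields a strong dilation at infinity $\sigma^N$, and Theorem~\ref{thm:dilation2} finishes the argument.

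I expect the main obstacle to be the first part of the previous step: showing that a dilation has a power which genuinely maps some neighborhood of $\mu$ strictly into itself with empty intersection of forward iterates. For an arbitrary self-homeomorphism of an arbitrary manifold this can fail --- the condition $\bigcap_n\sigma^n(T)=\varnothing$ permits the telescopes $\sigma^n(T)$ to keep re-meeting the compact curve $\partial T$ even as their common intersection is empty --- so the proof must exploit what is special about a maximal end of a telescoping 2-manifold: such an end is displaceable (Lemma~\ref{lem:weakly_ss}) and stable, which forces its neighborhoods to be highly homogeneous; concretely, any two maximal telescopes with defining end $\mu$ are ambiently homeomorphic (Lemma~\ref{lem:abs_same_tel} and Proposition~\ref{prop:amb_tube}), and this rigidity should be used to straighten the dynamics of $\sigma$ near $\mu$ to that of a standard contraction, after which the desired power and witness telescope are evident. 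If this straightening requires conjugating $\sigma$ in addition to passing to a power, no harm is done, since uniform normal generation is invariant under conjugation. Verifying that the fundamental region between the two nested maximal telescopes is honestly a tube, and assembling the per-end data into a single power, are then routine with the structural results of Section~\ref{sec:topology}.
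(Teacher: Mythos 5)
Your proposal follows essentially the same route as the paper: reduce to Theorem~\ref{thm:dilation2} by showing that some positive power $\sigma^N$ is a strong dilation at infinity (working one maximal end at a time and taking a common power $N$), then multiply the resulting conjugate bound by $N$. The technical point you flag---upgrading $\bigcap_{n}\sigma^n(T)=\varnothing$ to a power carrying a telescope into its own interior with a tube as fundamental domain---is precisely what the paper compresses into a one-sentence appeal to Theorem~\ref{thm:topology}, Lemma~\ref{lem:tube_ends}, and the classification of surfaces, so your extra discussion there is elaboration rather than a divergence in method.
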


\begin{proof}
Let \( \mu \) be a maximal end of \( M \). 
Up to replacing \( \sigma \) by \( \sigma^{-1} \), we can assume \( \sigma \) is contracting at \( \mu \). 
Let \( T \) be a maximal telescope in \( M \) with defining end \( \mu \) such that \( \bigcap_{n\in \bn} \sigma^n(T) = \varnothing \). 
Using Theorem~\ref{thm:topology}, Lemma~\ref{lem:tube_ends}, and the classification of surfaces, there is some \( N_\mu \in \bn \) such that the closure of \( T \ssm \sigma^{N_\mu}(T) \) is a tube; in particular, \( \sigma^{N_\mu} \) is a strong dilation at \( \mu \). 
If \( M \) has a unique maximal end, let \( N = N_\mu \).
Otherwise, let \( \mu' \) be the other maximal end of \( M \),  let \( N_{\mu'} \) be defined analogously, and set \( N = N_\mu N_{\mu'} \).
Then, \( \sigma^N \) is a strong dilation.
By Theorem~\ref{thm:dilation2}, every element of \( H(M) \) is a product of at most \( 16N \) conjugates of \( \sigma \) and \( \sigma^{-1} \); therefore,  \( \sigma \) uniformly generates \( H(M) \). 
\end{proof}

To finish, we prove Corollary~\ref{cor:rotation}.

\begin{proof}[Proof of Corollary~\ref{cor:rotation}]
The case of a perfect set of maximal ends is handled by Theorem~\ref{thm:vlamis}, so we assume \( M \) has two maximal ends.
Let \( P \) be a tube in \( M \).
By Theorem~\ref{thm:topology}(3), we can identify \( M \) with \( P \tilde\times \bz \). 
Using Theorem~\ref{thm:topology}(1), there exists an involution \( \rho \co P \to P \) that swaps the boundary components of \( P \); moreover, if \( M \)---and hence \( P \)---is orientable, \( \rho \) can be chosen to be orientation preserving.  
Define \( \theta \co P \tilde\times \bz \to P \tilde\times \bz \) by \( \theta(x,n) = (\rho(x), -n) \). 
Then, \( \theta \) is an involution of \( P \tilde\times \bz \). 
Let \( \tau \co P \tilde\times \bz \to P \tilde\times \bz \) be given by \( \tau(x,n) = (x, n+1) \). 
Then, it is readily verified that \( \theta^\tau \circ \theta \) is a strong dilation at infinity. 
The result is now a direct application of Theorem~\ref{thm:dilation2}. 
\end{proof}

\bibliographystyle{amsplain}
\bibliography{references}

\end{document}